\newtheorem{theorem}{Theorem}[section]
\newtheorem{lemma}[theorem]{Lemma}
\newtheorem{proposition}[theorem]{Proposition}
\newtheorem{remark}[theorem]{Remark}
\newtheorem{definition}{Definition}[section]
\newtheorem{corollary}[theorem]{Corollary}
\newcommand{\margnote}[1]{\ifthenelse{\boolean{shownotes}}
	{\marginpar{\raggedright\tiny\texttt{#1}}}{}}
\newcommand{\CC}{\mathbb{C}}
\newcommand{\RR}{\mathbb{R}}
\newcommand{\bu}{\bm{u}}
\newcommand{\bn}{\bm{n}}
\newcommand{\bg}{\bm{g}}
\newcommand{\bx}{\bm{x}}
\newcommand{\cD}{\mathcal{D}}
\newcommand{\by}{\bm{y}}
\definecolor{myorange}{rgb}{0.80, 0.60, 0.25}
\begin{document}
	
\title[Elastic dislocations]{Analysis of a model of elastic dislocations in geophysics}

\author[A. Aspri {\em et al.}]{Andrea Aspri}
\address{ Johann Radon Institute for Computational and Applied Mathematics (RICAM)}
\email{andrea.aspri@ricam.oeaw.ac.at}

\author[]{Elena Beretta}
\address{Dipartimento di Matematica, Politecnico di Milano and Department of Mathematics, NYU-Abu Dhabi}
\email{elena.beretta@polimi.it}

\author[]{Anna L. Mazzucato}
\address{Department of Mathematics, Penn State University}
\email{alm24@psu.edu}

\author[]{Maarten V. De Hoop}
\address{Department of Computational and Applied Mathematics and Department of Earth, Environmental, and Planetary Sciences, Rice University}
\email{mdehoop@rice.edu}

\date{\today}

\keywords{dislocations, Lam\'e system, well-posedness, inverse problem, uniqueness, weighted spaces}

\subjclass[2010]{Primary 35R30; Secondary 35J57, 74B05, 86A60}

\begin{abstract}
We analyze a mathematical model of {\it elastic dislocations} with applications to geophysics, where by an elastic dislocation we mean an open, oriented Lipschitz surface in the interior of an elastic solid, across which there is a 
discontinuity  of the displacement. We model the Earth as an infinite, isotropic, inhomogeneous, elastic medium occupying a half space, and assume only Lipschitz continuity of the Lam\'e parameters. We study the well posedness of very weak solutions to the forward problem of determining the displacement by imposing traction-free boundary conditions at the surface, continuity of the traction and a given jump on the displacement across the fault. We employ suitable weighted Sobolev spaces for the analysis. We utilize the well posedness of the forward problem and unique-continuation arguments to establish uniqueness in the inverse problem of determining the dislocation surface and the displacement jump from measuring the displacement at the surface of the Earth. Uniqueness holds for tangential or normal jumps and under some geometric conditions on the surface.
\end{abstract}

\maketitle

\section{Introduction}

In this paper,  we analyze a mathematical model of {\it elastic dislocations} with applications to geophysics,  see for 
example \cite{Bonafede-Rivalta, Eshelby,Rivalta-Mangiavillano-Bonafede,Segall10,Zwieten-Hanssen-Gutierrez}. An 
{\it elastic dislocation} is an open, oriented surface in the interior of an elastic solid, across which there is a
discontinuity of the displacement. It describes a fault plane undergoing slip over a limited area, a thin intrusion such as a dyke, or a crack the  faces  of which slide over one another or separate by the action of an applied stress. 
An elastic dislocation for which  the displacement discontinuity varies from point to point of the internal surface is 
called  a {\it Somigliana dislocation}, while in the particular case of a constant displacement discontinuity it is known 
as a  {\it Volterra dislocation} (see \cite{Eshelby,Volterra}).

We model the dislocation by an open, oriented Lipschitz surface $S$ with Lipschitz boundary $\partial S$ such that 
$\overline{S}\subset \mathbb{R}^3_-$. In particular, we assume that the dislocation is at positive distance from the 
surface of the Earth, identified with the plane $\{x_3=0\}$. We orient $S$ by choosing a unit normal vector $\bn$. 
In geophysical applications, one can assume that the closure $\overline S$ is compact. We assume the Earth's interior 
to be an  isotropic and inhomogeneous  infinite elastic medium. 
In the regime of  small-amplitude deformations, we are led to study  a boundary-value/transmission problem in a 
half space for the system of linearized elasticity:
\begin{equation}\label{eq: transm_problem_intro}
			\begin{cases}
				\textrm{\textup{div}}\, (\mathbb{C}\widehat{\nabla}\bm{u})=\bm{0}, & \textrm{in}\,\, 
				\mathbb{R}^3_- \setminus \overline{S},\\
				(\mathbb{C}\widehat{\nabla}\bm{u})\bm{e}_3=\bm{0}, & \textrm{on}\,\, \{x_3=0\},\\
				[\bm{u}]_{S}=\bm{g}, \\
				[(\mathbb{C}\widehat{\nabla}\bm{u})\bm{n}]_{S}=\bm{0}, \\
			\end{cases}
\end{equation}
where $\mathbb{C}$ is the Lam\'e tensor with non-constant coefficients of Lipschitz class, satisfying the usual 
strong convexity assumption, $\bm{u}$ is the displacement field, $\bm{e}_3=(0,0,1)$ is the unit normal vector on 
$\{x_3=0\}$,  and  $\bm{g}$ the displacement jump across  the dislocation  $S$. As customary, we denote the jump 
of a function or tensor field 
$\bm{f}$ across $S$ by $[\bm{f}]_s:=\bm{f}_+-\bm{f}_-$, where $\pm$ denotes a non-tangential limit to each side of the oriented surface 
$S$, $S_+$ and $S_-$, where $S_+$ is by convention the side determined by $\bn$. 
The {\em direct} or {\em forward problem} consists, knowing $\CC$, $S$, and $\bg$, in finding $\bu$ solution of 
\eqref{eq: transm_problem_intro}. The {\em inverse problem} consists in determining $S$ and $\bg$ from 
measurements made on $\bu$. In seismology and geophysics, the data is typically in the form of measurements taken at the 
surface of the Earth. For the dislocation problem, since the solution is traction-free at the boundary, these data 
consist in measurements of the displacement at the surface induced by the jump $\bg$ at the dislocation.
To be more specific, we investigate the inverse problem of determining  dislocations $S$ caused by a tangential slip 
along the dislocation surface (the case of a purely normal jump across the surface can also be included in our analysis) from surface 
measurements of  the displacement $\bm{u}$ on some bounded open portion $\Sigma$ of $\{x_3=0\}$. 
One of the main results of this work is the unique determination of the dislocation surface $S$ and the slip strength $\bg$ 
from knowledge of $\bu$ on $\Sigma$, under some geometric conditions on $S$. These conditions are satisfied, for 
example, by polyhedral surfaces.
\begin{figure}[h]
	\centering
	\includegraphics[scale=0.5]{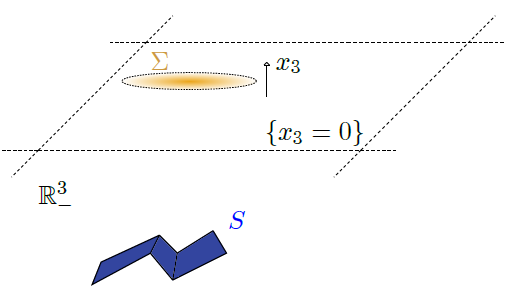}
	\caption{Geometrical setting.}
\end{figure}
{The inverse problem investigated in this paper is of particular interest in and motivated by applications to geophysics. In
fact, the analysis of coseismic deformation through the inverse slip and dislocation problem might enhance the 
understanding of failure at faults and microseismicity, see for example \cite{Evans-Meade} and references therein. 
Understanding the earthquake source process and how earthquake sequences evolve requires accurate estimations of how stress changes due to an earthquake. Models of stress change induced by earthquakes have been performed using solutions that assume homogeneous slip \cite{Okada}. Cohen \cite{Cohen} introduced simplified formulas in the case of a
long dip-slip fault that ruptures the surface. To mitigate the error from the assumption of homogeneous slip, faults have commonly been divided into many smaller patches with different slip \cite{Evans-Meade,Jiang-Wang-Wu-Che-Shen}. A recent triangular fault patch
slip solution \cite{Nikkhoo-Walter} has triggered studies with more general fault morphologies.
The basic model -- assuming a patched fault but a generally
heterogeneous surrounding medium -- analyzed in this paper has been
widely used in the analysis of different earthquakes over the past two
decades \cite{Cambiotti-Zhou-Sparacino-Sabadini-Sun,Deloius-Nocquet-Vallee,Evans-Meade,Jiang-Wang-Wu-Che-Shen,Johnson,Serpelloni-Anderlini-Belardinelli,Simons-Fialko-Rivera,Trasatti-Hyriakopoulos-Chini,Walker-Bergman-Szeliga-Fielding}. Here, we note that
the inverse coseismic slip problem is often combined with the analysis
of seismic data to colocate ruptures and faults. Furthermore, we note that Simons et al. \cite{Simons-Fialko-Rivera} consider also a layered half space containing discontinuities while we assume smoothly varying Lam\'{e} parameters.
}

In order to investigate the inverse problem, one needs to have a good well-posedness theory for the forward 
problem.  This problem is less studied than more classical transmission problems in bounded domains, and there are 
 some additional technical difficulties that need to be overcome. For starters, 
the solution is discontinuous across the interface and the quadratic variational form needs to be properly 
augmented to obtain coercivity for weak solutions. In addition, the problem is naturally posed in an unbounded 
domain, a half space, which leads us to employ suitable weighted Sobolev spaces. 
Here, and throughout the paper,  we denote standard $L^2$-based Sobolev spaces with $H^s$, $s\in \RR$. The notation for the weighted spaces is discussed in Section \ref{sec: notation and functional setting}.

If $ \bm{g}$ belongs to the space $H^{1/2}(S)$ and has compact support in $S$, then the transmission 
problem \eqref{eq: transm_problem_intro} admits  a unique variational solution $\bu$ on 
$\mathbb{R}^3_- \setminus \overline{S}$ in a suitable weighted Sobolev space that takes into account the conditions at infinity 
(see \cite{Volkov-Voisin-Ionescu} for the case of constant coefficients). 
In fact, $\bu$ can be expressed as a double layer potential on the dislocation $S$, i.e.,
\begin{equation*}
			\bm{u}(\bm{y})=-\int\limits_{S}\left[(\mathbb{C}(\bm{x})\widehat{\nabla}_{\bm{x}}\bf N (\bm{x},
			\bm{y})) \bm{n}(\bm{x})\right]^T \bm{g}(\bm{x})\, d\sigma(\bm{x}),
		\end{equation*}
where $\mathbf{N}$ is the matrix-valued Neumann function 
\begin{equation*}
	\begin{cases}
		\textrm{\textup{div}}\,(\mathbb{C}(\bm{x})\widehat{\nabla}\bf N(\bm{x},\bm{y}))=\delta_{\bm{y}}
		(\bm{x})\bf I ,& \textrm{in}\, \, \mathbb{R}^3_-,\\
		(\mathbb{C}(\bm{x})\widehat{\nabla}\bf N(\bm{x},\bm{y}))\bm{e}_3=\bm{0}, & \textrm{on}\, \, \{x_3=0\},
		\\
	\end{cases}
	\end{equation*}
satisfying certain decay conditions at infinity.
The solution has then locally $H^1$-regularity from standard regularity results for potential theory on Lipschitz 
surfaces (see  e.g. \cite{Mitrea-Taylor} and references therein).  
For the application to the inverse problem, we cannot restrict the support of $\bg$, which may coincide with 
$\overline S$. In this case, even when $\bm{g}\in H^{1/2}(S)$, the existence of a variational solution with $H^1$ 
regularity locally in the complement of the dislocation is not guaranteed anymore. For example, if $S$ is a rectangular Volterra dislocation, the double layer potential  blows up logarithmically at the vertices of the rectangle, as already observed  in \cite{Okada}. 

Hence, establishing the well-posedness of the problem \eqref{eq: transm_problem_intro} in  full generality is rather 
delicate and not covered by results in the literature. We devote the first half of the paper to investigating this 
problem. We find it convenient to reformulate the transmission problem as an equivalent source problem in the 
whole half space  $\RR^3_-$:
\begin{equation}\label{pb: problem_intro}
		\begin{cases}
				\textrm{\textup{div}}\, (\mathbb{C}\widehat{\nabla}\bm{u})=\bm{f}_S ,& \textrm{in}\,\, 
				\mathbb{R}^3_-, \\
				(\mathbb{C}\widehat{\nabla}\bm{u})\bm{e}_3=\bm{0}, & \textrm{on}\,\, \{x_3=0\},
		\end{cases}
	\end{equation} 	
with the source term given by
	\begin{equation}\label{eq: source_term_intro}
			\bm{f}_{S}=\textrm{div}(\mathbb{C}(\bm{g}\otimes \bm{n})\delta_{S}),
	\end{equation}		
where $\delta_{S}$ represents the Dirac measure concentrated on $S$.
In the first part of the paper we show well-posedness of this problem in the weighted Sobolev space 
$H^{{1}/{2}-\varepsilon}_{-{1}/{2}-\varepsilon}(\mathbb{R}^3_-)$ with $\varepsilon>0$, that is in the context of 
very weak solutions. For a definition of this weighted space, see Section \ref{sec: notation and functional setting}.  
We stress that  this elliptic regularity result is optimal as the source \eqref{eq: source_term_intro} is a distribution with compact 
support belonging to  $H^{-{3}/{2}-\varepsilon}(\mathbb{R}^3_-)$.
The above result leads to  a mathematically rigorous analysis of the elastic dislocation 
model in the geophysical framework with minimal assumptions on the elastic coefficients. We follow the  approach 
of \cite{Lions-Magenes} for the case of regular coefficients on bounded domains, which is based on duality 
arguments and interpolation, adapted to the framework of weighted spaces in a half space in \cite{Hanouzet} and 
\cite{Amrouche-Dambrine-Raudin} when the  source terms are integrable on $\RR^3_-$. We extend it to the case of 
more singular source terms for the system of  linearized elasticity with Lipschitz coefficients. 
We also extend the representation formula of the solution as a double layer potential. From regularity results  for potential theory on Lipschitz surfaces (see again  \cite{Mitrea-Taylor}), it follows that the displacement lies in $H^s(\mathbb{R}^3_-\setminus S)$ locally for $s<1$, although it fails to belong to $H^1$ even locally near the dislocation surface. 
The analysis of the forward problem and, in particular, the hypothesis that 
$\textrm{supp}(\bm{g})=\overline{S}$ are essential in the second part of the paper to investigate the inverse problem of 
determining  dislocations $S$ caused by a tangential or normal slip along the dislocation surface from surface measurements of the 
displacement $\bm{u}$ on some bounded open portion $\Sigma$ of $\{x_3=0\}$. 
We prove, by using unique-continuation properties of solutions to the Lam\'e system with Lipschitz 
parameters (see \cite{Lin-Nakamura-Wang}), that  one surface measurement of the displacement field is sufficient 
to recover uniquely both the dislocation surface and the slip, assuming some geometric conditions on $S$ --- for instance $S$ can be piece-wise linear {and a graph with respect to a fixed, but arbitrary, coordinate frame} --- and assuming the slip field is  either purely tangential (corresponding to the case of a fault, the two sides of which  slide one  over the other) or directed 
in the normal direction (corresponding to a crack, the two sides of which separate from one another).  
It is important to note that in three space dimensions, the unique continuation property may not hold  for a second-order elliptic operator, if its coefficients are only in H\"older classes $C^{0,\alpha}$, $\alpha<1$, and not Lipschitz continuous (see \cite{Miller,Plis}). In the framework of the Lam\'e operator with Lipschitz continuous coefficients very recent results on unique continuation are in \cite{Koch-Lin-Wang,Lin-Nakamura-Wang}. Therefore, the regularity assumption on the Lam\'e parameters that we impose appears optimal in this context.

In this paper,  we do not tackle the problem of determining quantitative stability estimates for the inverse problem, which we 
leave for future work. For non-quantitative stability estimates in the setting of constant Lam\'e coefficients, see \cite{Triki-Volkov}. However, we expect to be able to prove Lipschitz stability estimates for piece-wise linear 
dislocations in terms of the boundary data, thanks to the presence of singularities at the corners of the dislocation and generalizing quantitative unique continuation estimates obtained in \cite{Morassi-Rosset}.  
In fact, in \cite{Beretta-Francini-Vessella} the authors are able to prove a Lipschitz stability estimate for linear cracks 
in a two dimensional elastic homogeneous medium, taking advantage of the presence of singularities at the endpoint 
of the crack. We refer to \cite{Beretta-Francini-Kim-Lee} for a reconstruction algorithm in this setting.
Concerning the reconstruction of dislocations and tangential slips in three space dimensions, several algorithms have 
been proposed, mostly related to the case of rectangular dislocations and homogeneous Lam\'e parameters. 
In the mathematics literature, we refer to \cite{Volkov-Voisin-Ionescu}, where the authors implement an iterative 
method for detecting the plane containing a fault and the tangential slip, supposed to be unidirectional, in the 
presence of only a finite number of surface measurements,  via a constrained minimization of a suitable misfit 
functional.  In the geophysics literature, we mention \cite{Arnodottir-Segall}, where a two-step algorithm is discussed,  first assuming a uniform slip and using a nonlinear, quasi-Newton method to locate the dislocation surface, and then recovering a non-uniform slip along the surface. We mention also the works   \cite{Fukahata-Wright,Zhou-Cambiotti-Sun-Sabadini},  \cite{Evans-Meade} (and references therein), where a Bayesian framework and a sparsity-promoting, 
state-vector regularization method are  employed, respectively.

The paper is organized as follows; in Section \ref{sec: notation and functional setting}  we introduce needed 
notation, and  recall the definition of certain weighted Sobolev spaces and their relevant properties useful for our 
analysis.  Section \ref{sec: the direct problem} is devoted to the study of the well posedness of problem \eqref{pb: 
problem_intro} and to an analysis of the regularity of the solution in a neighborhood of the dislocation surface. Finally, in Section 
\ref{sec: inverse problem} we prove a uniqueness result for the inverse problem of determining a piece-wise linear 
dislocation surface and its slip. We present an explicit calculation of the singularities in the displacement for the Volterra dislocation  in the Appendix.

\section*{Acknowledgments}
The authors thank C. Amrouche, S. Salsa, and M. Taylor for useful discussion and for suggesting relevant literature. {They also thank the anonimous referees for their careful reading of this work and useful suggestions}. A. Aspri and A. Mazzucato  thank the 
Departments of Mathematics at NYU-Abu Dhabi and at Politecnico of Milan for their hospitality. A. Mazzucato was partially 
supported by the US National Science Foundation Grant DMS-1615457. M. V. de Hoop gratefully acknowledges support from the Simons Foundation under the MATH + X program, the National Science Foundation under grant DMS-1815143, and the corporate members of the Geo-Mathematical Group at Rice University.

\section{Notation and Functional Setting}\label{sec: notation and functional setting}
In this section we introduce the  weighted Sobolev spaces used for the analysis of the forward problem. We begin by recalling some standard, but needed, notation.

We denote scalar quantities in italics, e.g. $\lambda, \mu, \nu$,
points and vectors in bold italics, e.g.  $\bm{x}, \bm{y}, \bm{z}$ and $\bm{u}, \bm{v}, \bm{w}$, matrices and 
second-order tensors in boldface, e.g.  $\mathbf{A}, \mathbf{B}, \mathbf{C}$, and fourth-order tensors in 
blackboard face, e.g.  $\mathbb{A}, \mathbb{B}, \mathbb{C}$. 

 We indicate the symmetric part of a second-order tensor $\mathbf{A}$ by
	\begin{equation*}
		\widehat{\mathbf{A}}=\tfrac{1}{2}\left(\mathbf{A}+\mathbf{A}^T\right),
	\end{equation*}
where $\mathbf{A}^T$ is its transpose.
We use the standard notation for inner product between two vectors $\bm{u}$ and $\bm{v}$,
$\bm{u}\cdot \bm{v}=\sum_{i} u_{i} v_{i}$, and between second-order tensors $\mathbf{A}:
\mathbf{B}=\sum_{i,j}a_{ij} b_{ij}$. 
The tensor product of two vectors $\bm{u}$ and $\bm{v}$ is denoted by $[\bm{u}\otimes \bm{v}]_{ij}=u_i v_j$. 
With $|\mathbf{A}|$ we denote the norm induced by the inner product between second-order tensors, that is,
	\begin{equation*}
		|\mathbf{A}|=\sqrt{\mathbf{A}:\mathbf{A}}.
	\end{equation*}
The vectors $\bm{e}_1, \bm{e}_2$, and $\bm{e}_3$ represent the standard orthonormal basis of $\mathbb{R}^3$, 
and we denote the lower half space by: 
	\begin{equation*}
		\{\bm{x}=(x_1,x_2,x_3)\in \mathbb{R}^3\,:\,x_3< 0\}=\mathbb{R}^3_-.
	\end{equation*}
We canonically identify its boundary   $\{(x_1,x_2,x_3)\in \mathbb{R}^3\,:\, x_3=0 \}$ with $\mathbb{R}^2$, and 
denote a point in $\RR^2$ by $\bx'=(x_1,x_2)$.	
The set $B_r(\bm{x})$ represents the ball  of center $\bm{x}$ and radius $r$ and $B^-_r(\bm{x})$, 
$B^+_r(\bm{x})$ the lower and upper half balls, respectively.
With $B'_{r}(\bm{x'})$ we mean the disk of center $\bm{x}'$ and radius $r$, namely 	
	\begin{equation*}
		B'_{r}(\bm{x'})=\{\bm{y}'\in\mathbb{R}^2\, :\, (y_1-x_1)^2+(y_2-x_2)^2<r^2\}.
	\end{equation*}
Finally, we follow the standard multi-index notation for partial derivatives {and throughout the paper, we use the notation $\langle,\rangle_{(X',X)}$ to denote the duality pairing between a {Banach} space $X$ and its dual $X'$.}

\subsubsection*{{\bf Weighted Sobolev Spaces.}}

We next define the weighted Sobolev spaces utilized in our work. We briefly recall their main properties, referring 
the reader for instance to 
\cite{Amrouche-Bonzom,Amrouche-Dambrine-Raudin,Amrouche-Girault-Giroire,Amrouche-Necasova,Amrouche-Necasova-Raudin,Hanouzet} and references 
therein for a more in-depth discussion.

As usual, we begin by defining Sobolev spaces of integer regularity index, and we deal only with $L^2$-based spaces 
to avoid unnecessary technicalities. Throughout, we denote $\mathcal{D}(\Omega)=C^{\infty}_0(\Omega)$ for any 
non-empty set $\Omega$. 
{For our purposes, it will be sufficient to consider the cases $\Omega=\RR^3$, $\Omega=\RR^3_-$, or $\Omega=\RR^3_-\setminus K$, where $K$ is a compact set.}

\begin{definition}[Weighted Sobolev spaces]
Let
	\begin{equation}\label{eq: weight}
		\varrho(\bx)=(1+|\bm{x}|^2)^{1/2}.
	\end{equation}
Let $\Omega$ be a domain in $\RR^3$. For $m\in \mathbb{Z}_+$, $\alpha\in \mathbb{R}$, we define 
	\begin{equation}
	 H^m_{\alpha}(\Omega)=\Bigg\{ f\in \mathcal{D}'(\Omega);\,\, 0\leq |\kappa|\leq m,\,\, \varrho^{\alpha-m+|
	 \kappa|}\,\partial^{\kappa}f \in L^2(\Omega) \Bigg\},
	\end{equation}		
where $\mathcal{D}'(\Omega)$ denotes the dual space of $\mathcal{D}(\Omega)$, that is, the space of distributions on $\Omega$.
\end{definition}
{These function spaces are Hilbert spaces equipped with the inner product
	\begin{equation*}
		(f,g)_{H^m_{\alpha}(\Omega)}=\sum_{|\kappa|=0}^{m}\int_{\Omega}\varrho^{2(\alpha-m+|\kappa|)}\partial^{\kappa}f \partial^{\kappa}g\, d\bm{x}, 
	\end{equation*}
which induces the norm}
	\begin{equation*}
		\|f\|^2_{H^m_{\alpha}({\Omega})}=\sum_{|\kappa|=0}^{m}\|\varrho^{\alpha-m+|\kappa|}\, 
		\partial^{\kappa}f\|^2_{L^2({\Omega})}.
	\end{equation*}
If $\bu$ is a vector function, then we will say $\bu\in H^m_\alpha$ if each component belongs to that space. 
We similarly define weighted spaces in $\RR^2$, replacing $\bx$ with $\bx'$ and  $\varrho$ with $\varrho':=(1+|
\bm{x}'|^2)^{1/2}$. 
With slight abuse of notation, we will refer to $\rho$, $\rho^\alpha$, or simply to $\alpha$, as the ``weight''.   

\begin{remark}
If $\Omega$ is a bounded domain, then the weighted spaces coincide with the usual Sobolev spaces.

We also stress that the spaces $H^m_\alpha$ do not fall into the typical framework of weighted spaces for the analysis on singular spaces, such as conormal Sobolev spaces \cite{Melrose}. In particular, it does not seem possible to reduce to standard Sobolev spaces by conjugation with an appropriate power of the weight.
\end{remark}

{From this point on, we focus on the cases, $\Omega=\mathbb{R}^3$ and $\Omega=\mathbb{R}^3_-$, given that in a neighbourhood of the fault we can utilize the machinery of standard Sobolev spaces by the above remark.} As with the standard Sobolev spaces, one can show that the space $\mathcal{D}({\RR^3})$ is dense in 
$H^m_{\alpha}({\RR^3})$, while $\mathcal{D}(\overline {\RR^3_-})$ is dense in $H^m_{\alpha}(\RR^3_-)$. 
We hence define the space 
\begin{equation}\label{eq: H m,alpha,0}
\mathring{H}^m_{\alpha}(\mathbb{R}^3_-)=\overline{\mathcal{D}(\mathbb{R}^3_-)}^{\|\cdot
\|_{H^m_{\alpha}(\mathbb{R}^3_-)}},
\end{equation}
which is a proper subset of $H^m_{\alpha}(\RR^3_-)$,  while $H^m_\alpha({\RR^3})=\mathring{H}^m_{\alpha}({\RR^3})$.

We define $H^{-m}_{-\alpha}({\RR^3})$ as the dual space to $H^m_\alpha({\RR^3})$.
Similarly, we define  $H^{-m}_{-\alpha}(\mathbb{R}^3_-)$ as the dual space to
$\mathring{H}^m_\alpha(\mathbb{R}^3_-)$.  Both are  spaces  of distributions.

Before introducing fractional spaces and discuss trace results, we recall some basic properties of the spaces 
$H^m_\alpha$. For simplicity, we state most of these properties, such as Poincar\'e's and Korn's inequality  only for the spaces $H^m_\alpha$ that we actually employ in our work, which are $H^1_0(\RR^3_-)$ and $H^2_1(\RR^3_-)$.
The following  weighted Poincar\'e-type inequality 
holds (see for  example \cite{Amrouche-Necasova,Hanouzet}):
\begin{equation}\label{eq: Poincare}
\begin{aligned}
\|f\|_{H^{1}_{0}(\mathbb{R}^3_-)}&\leq C_1 \|\nabla f\|_{L^2(\mathbb{R}^3_-)},\\
\|f\|_{H^{2}_{1}(\mathbb{R}^3_-)}&\leq C_2 \|\varrho\partial^2 f\|_{L^2(\mathbb{R}^3_-)}.\\
\end{aligned}
\end{equation}
Similarly, if a vector field $\bm{u}$ has square-integrable deformation tensor $\widehat{\nabla}\bm{u}$ and 
belongs to the space $H^0_{-1}(\mathbb{R}^3_-)$, then $\bu\in H^1_0(\RR^3_-)$ by a weighted Korn-type inequality 
\cite[Theorem 2.10]{Amrouche-Dambrine-Raudin} (see also \cite{Kondracev-Oleinik}) and the Poincar\'e's inequality above:
\begin{equation}\label{eq: Korn}
	 \|\bu\|_{H^1_0(\RR^3_-)}\leq C\, \|\nabla \bu\|_{L^2(\RR^3_-)} \leq C\, \|\widehat{\nabla}\bm{u}\|_{L^2(\mathbb{R}^3_-)}.
\end{equation}

For any $\gamma\in\mathbb{R}$ such that $(3/2+\alpha-\gamma)\notin \{1,\cdots,m\}$, with
$m\in\mathbb{N}$, the mapping 
	\begin{equation}\label{map: isomorph}
				\bm{u}\in H^m_{\alpha}({\mathbb{R}^3_-})\to \varrho^{\gamma}\bm{u}\in H^m_{\alpha-\gamma}
				({\mathbb{R}^3_-})
	\end{equation}
is an isomorphism; for any $\ell\in \mathbb{N}^3$, the mapping 
	\begin{equation}\label{map: continuity}
                 \bm{u}\in H^m_{\alpha}({\mathbb{R}^3_-})\to \partial^{\ell}\bm{u}\in 
                  H^{m-|\ell|} _{\alpha} ({\mathbb{R}^3_-})
	\end{equation}
is continuous. For more details {and extention of all the previous properties in $\mathbb{R}^2$} see for example \cite{Amrouche-Necasova-Raudin,Hanouzet}.

{We next define fractional spaces on $\RR^3$ for $0<s<2$. We stress that} the same definition applies in $\RR^2$ for $0<s<1$ with the obvious change of
notation (e.g. $\varrho$ is replaced by $\varrho'$). In both cases, we assume $\alpha\geq -1$ and $d/2+\alpha\ne s$, $d=2,3$.   

For $0<s<1$, we define the weighted fractional space $H^s_\alpha(\RR^3)$ as:
\begin{equation} \label{eq:def_fractionalSobolev1}
    H^s_\alpha(\RR^3) = \Bigg\{ f\in \cD'(\RR^3) ;\, \varrho^{\alpha-s}\, f \in L^2(\RR^3), \, |f|_s <\infty\Bigg\},
\end{equation}
where $|\cdot |_s$ denotes a weighted Gagliardo seminorm:
\[
     |f|_s := \iint_{\RR^3\times \RR^3} \frac{|\varrho^\alpha(\bx)\,f(\bx)-\varrho^\alpha(\by)\, f(\by)|^2}{|\bx-
      \by|^{3+2s}}\, d\bx\, d\by.
\]
For $1<s<2$, we define $H^s_\alpha(\RR^3)$ as:
\begin{equation} \label{eq:def_fractionalSobolev2}
    H^s_\alpha(\RR^3) = \Bigg \{ f\in \cD'(\RR^3) ;\,  0\leq |\kappa|\leq [s]-1,\,\, \varrho^{\alpha-s+|\kappa|}\,
    \partial^{\kappa} f \in L^2(\RR^3) , \;  \partial^{[s]} f \in H^{s-[s]}_{\alpha}(\RR^3)\Bigg\},
\end{equation}
with $[s]$ the greatest integer less than or equal to $s$.

\begin{remark}
In general, a logarithm correction to the weight is needed to properly define the spaces unless some conditions on 
the indices $s$ and $\alpha$ for a given dimension $d$  are satisfied (see e.g \cite{Amrouche-Dambrine-Raudin}).  
These conditions are met if $s$ and $\alpha$ are taken as assumed above.
\end{remark}

Fractional spaces on $\RR^3_-$ can be defined by restriction, that is,
\begin{equation} \label{eq:fractionalSobolevHalfSpace}
    H^s_\alpha(\RR^3_-) := \Bigg\{ f\in \cD'(\overline{\RR^3_-});\, \exists g\in H^s_\alpha(\RR^3), \;
    f=g\lfloor_{\RR^3_-}\Bigg\}, 
\end{equation}
equipped with the norm:
\[
      \|f\|_{H^s_\alpha(\RR^3_-)} := \inf \{\|g\|_{H^s_\alpha};\, g\in H^s_\alpha(\RR^3), \, f=g\lfloor_{\RR^3_-} \}.
\]
Then, there exists a continuous restriction operator $R$ and extension operator $E$ such that  $R\,E$ is the 
identity map on $H^s_\alpha(\RR^3_-)$ (see e.g. \cite[Lemma 7]{TriebelWeight}, \cite[Theorem I.4]{Hanouzet}). 

We also define the spaces $\mathring{H}^s_\alpha(\RR^3_-)$, $0<s<2$, $0\leq \alpha$, by the analog of formula 
\eqref{eq: H m,alpha,0}, where there is a canonical extension operator given by the extension by zero to the whole $\RR^3$. Lastly, we denote by $H^{-s}_{-\alpha}(\RR^3_-)$ the dual of $\mathring{H}^{s}_{\alpha}(\RR^3_-)$.

Fractional spaces can also be obtained by means of real interpolation of integer-order spaces. This result will be 
needed in Section \ref{sec: the direct problem}. We will exploit the interpolation results in \cite{TriebelWeight}. (In that work, more general spaces $w^s_{\alpha,p}$ are studied; by Theorem 2 there,  the spaces $H^s_\alpha(\RR^3)$ agree with the space $w^s_{2,\alpha}(\RR^3)$ for any  $0\leq s\leq 2$ and any $\alpha \in \RR$).
 By formula (59) in \cite{TriebelWeight}, which is a particular case of Theorem 3 (a), using 
Formula (15) in that same work, the following result holds:
   \begin{multline}\label{eq:interp0}
         \qquad \qquad 
         \left[ H^{s_0}_{\alpha_0}(\RR^3),H^{s_1}_{\alpha_1}(\RR^3)\right]_{\Theta,2} = H^s_\alpha(\RR^3), \\
         s=s_0(1-\Theta) +s_1\,\Theta, \quad \alpha= \alpha_0(1-\Theta) +\alpha_1\,\Theta, \quad \Theta \in (0,1),
   \end{multline}       
for any $\alpha_0$, $ \alpha_1 \in \RR$, $s_0,\, s_1 \in [0,2]$, as long as $s_0\ne s_1$ and $s$ is not an integer.
The identification \eqref{eq:interp0} holds also for weighted spaces on $\RR^3_-$. In fact, by the definition of interpolation space (see e.g. \cite[Definition 2.4.1]{Bergh-Lofstrom}), the restriction and extension operators map between corresponding interpolation spaces, so:
\[
      \left[ R(H^{s_0}_{\alpha_0}(\RR^3)), R(H^{s_1}_{\alpha_1}(\RR^3))\right]_{\Theta,2} = R(H^s_\alpha(\RR^3)),
\]
but the restriction operator $R:H^s_\alpha(\RR^3)\to H^s_\alpha(\RR^3_-)$, $s\geq 0$, acts surjectively (see also Section 3.3 in \cite{TriebelWeight}), so that $ R(H^s_\alpha(\RR^3))= H^s_\alpha(\RR^3_-)$.
We specialize the interpolation formula to two cases of interest:
\begin{subequations} \label{eq:interp2} 
    \begin{equation}\label{eq:interp2.1} 
        \left[ H^{0}_{-1}(\RR^3_-),H^{1}_{0}(\RR^3_-)\right]_{\Theta,2} = H^\Theta_{-1+\Theta}(\RR^3_-),
    \end{equation}
     \begin{equation} \label{eq:interp2.2} 
       \left[ H^{1}_{0}(\RR^3_-),H^{2}_{1}(\RR^3_-)\right]_{\Theta,2} = H^{1+\Theta}_{\Theta}(\RR^3_-).
     \end{equation}
\end{subequations}

Next, we extend the interpolation analysis to negative spaces using duality. It is clear that \eqref{eq:interp2} holds for the spaces $\mathring{H}^s_\alpha(\RR^3_-)$, which are (not necessarily proper) closed subspaces of $H^s_\alpha(\RR^3_-)$. We then recall the following standard fact about real interpolation (see e.g. again \cite[Definition 2.4.1]{Bergh-Lofstrom}), which applies since the intersection of any of the spaces   
$\mathring{H}^s_\alpha(\RR^3_-)$ contains the dense subspace $\cD(\RR^3_-)$, specialized to the weighted spaces:
\[
      \left[ H^{-s_0}_{-\alpha_0}(\RR^3_-), H^{-s_1}_{-\alpha_1}(\RR^3_-)\right]_{\Theta,2} = 
       H^{-s}_{-\alpha}(\RR^3_-),
\]
using that $(\mathring{H}^s_\alpha)'(\RR^3_-) = H^{-s}_{-\alpha}(\RR^3_-)$. Finally, using that 
\[
     \left[ A_0,A_1\right]_{\Theta,2} = \left[ A_1,A_0\right]_{1-\Theta,2},
\]
(see e.g. \cite[Theorem 3.4.1]{Bergh-Lofstrom}) we conclude that:
\begin{subequations} \label{eq:interp3} 
    \begin{equation}\label{eq:interp3.1} 
        \left[ H^{-1}_{0}(\RR^3_-),H^{0}_{1}(\RR^3_-)\right]_{1-\Theta,2} = H^{-\Theta}_{1-\Theta}(\RR^3_-),
    \end{equation}
     \begin{equation} \label{eq:interp3.2} 
       \left[ H^{-2}_{-1}(\RR^3_-),H^{-1}_{0}(\RR^3_-)\right]_{1-\Theta,2} = H^{-1-\Theta}_{-\Theta}(\RR^3_-).
     \end{equation}
\end{subequations}
We close this section by recalling a trace result which we present only for spaces $H^m_{\alpha}(\mathbb{R}^3_-)$, where $m$ is a positive integer and $\alpha\in\mathbb{R}$.

\begin{proposition}[\cite{Amrouche-Dambrine-Raudin}, Lemma 1.1]\label{prop: trace theor}
Let $m\geq 1$ and $\alpha\in\mathbb{R}$. Then, there exists  a continuous linear mapping 
	\begin{equation}
		\bm{\gamma}=(\gamma_0,\cdots,\gamma_{m-1}): H^m_{\alpha}(\mathbb{R}^3_-)\longrightarrow 
		\prod_{j=0}^{m-1}H^{m-j-1/2}_{\alpha}(\mathbb{R}^2).
	\end{equation}
In addition,  $\bm{\gamma}$ is surjective and 
	\begin{equation*}
		\textup{\textrm{Ker}}\,\bm{\gamma}=\mathring{H}^{m}_{\alpha}(\mathbb{R}^3_-).
	\end{equation*}
\end{proposition}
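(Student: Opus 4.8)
The plan is to derive the full statement from a single trace estimate for the zeroth-order trace $\gamma_0$, reassembled over dyadic shells. For the reduction, note that for $1\le j\le m-1$ the normal derivative $\partial_3^{\,j}u$ lies in $H^{m-j}_{\alpha}(\RR^3_-)$ by the continuity of differentiation \eqref{map: continuity}, so once one knows that
\[
   \gamma_0\colon H^{k}_{\alpha}(\RR^3_-)\longrightarrow H^{k-1/2}_{\alpha}(\RR^2)\quad\text{is bounded for every integer }k\ge1,
\]
one simply sets $\gamma_j u:=\gamma_0(\partial_3^{\,j}u)\in H^{m-j-1/2}_{\alpha}(\RR^2)$. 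In turn the case $k\ge2$ reduces to $k=1$: tangential derivatives commute with $\gamma_0$; $H^{k}_{\alpha}(\RR^3_-)$ embeds continuously into $H^{k-1}_{\alpha-1}(\RR^3_-)$ (discard the derivatives of order $k$, the remaining weight powers agreeing); and the fractional target $H^{k-1/2}_{\alpha}(\RR^2)$ is characterized through its tangential derivatives of order $\le k-1$, as in \eqref{eq:def_fractionalSobolev2}. Thus $\gamma_0 u\in H^{k-1/2}_{\alpha}(\RR^2)$ follows by applying the case $k=1$ to $u$ and to its tangential derivatives with the appropriately shifted weight. Throughout one uses that $\varrho$ restricts to $\varrho'$ on $\{x_3=0\}$, and that $\mathcal D(\overline{\RR^3_-})$ is dense in $H^m_\alpha(\RR^3_-)$, so that it suffices to prove the estimates for smooth $u$ and pass to the limit.

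For the base case $k=1$ I would use a dyadic decomposition adapted to the weight, the standard device for such spaces on unbounded domains. Fix a smooth partition of unity $1=\sum_{\kappa\ge0}\chi_\kappa^2$ on $\overline{\RR^3_-}$ with $\operatorname{supp}\chi_\kappa$ contained in the half-shell $A_\kappa=\{2^{\kappa-1}\le\varrho\le2^{\kappa+1}\}\cap\overline{\RR^3_-}$ and $|\partial^\beta\chi_\kappa|\lesssim 2^{-\kappa|\beta|}$, so that $\varrho\sim 2^{\kappa}$ on $A_\kappa$; write $A_\kappa'=A_\kappa\cap\{x_3=0\}\subset\RR^2$. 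Rescaling $A_\kappa$ to unit size by $\bx\mapsto 2^{-\kappa}\bx$ turns $\chi_\kappa u$ into a function on a fixed reference half-shell, on which the classical trace inequality onto the flat part of the boundary holds with a constant uniform in $\kappa$. Undoing the scaling, squaring, and summing in $\kappa$ (the shells have bounded overlap) then reproduces, up to constants, the $L^2$ part $\|\varrho'^{\alpha-1/2}\gamma_0 u\|_{L^2(\RR^2)}$ of the target norm and bounds it by $\|\varrho^{\alpha-1}u\|_{L^2(\RR^3_-)}+\|\varrho^{\alpha}\nabla u\|_{L^2(\RR^3_-)}\le\|u\|_{H^1_\alpha(\RR^3_-)}$. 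For the weighted Gagliardo seminorm I would split the double integral into a near-diagonal part, where $\bx'$ and $\by'$ lie in the same or adjacent shells and the rescaled classical seminorm estimate applies shell by shell, and a far-diagonal part, where $\bx'\in A_\kappa'$, $\by'\in A_\ell'$ with $|\kappa-\ell|\ge2$, so that $|\bx'-\by'|\gtrsim 2^{\max(\kappa,\ell)}$ and the kernel decay yields a geometrically convergent double sum dominated by the $L^2$ part already controlled.

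For surjectivity and a bounded right inverse I would build an explicit extension by the same mechanism: given $g$ on $\RR^2$, write $g=\sum_\kappa(\chi_\kappa')^2 g$ with $\chi_\kappa'$ the restriction of $\chi_\kappa$ to $\RR^2$, extend each rescaled piece from $\RR^2$ to a slab by a classical lifting, reassemble, and track the powers of $2^{\kappa}$; the simultaneous surjectivity of all the $\gamma_j$ then follows from a Taylor-type lifting $\sum_{j}\tfrac{x_3^{\,j}}{j!}(E_j g_j)(\bx',x_3)$ with a cutoff in $x_3$. Finally, for the kernel: the inclusion $\mathring H^m_\alpha(\RR^3_-)\subseteq\operatorname{Ker}\bm{\gamma}$ is immediate, since functions in $\mathcal D(\RR^3_-)$ vanish near $\{x_3=0\}$ and $\bm{\gamma}$ is continuous. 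For the converse, given $u\in H^m_\alpha(\RR^3_-)$ with $\bm{\gamma}u=0$, I would first truncate by $\chi(\bx/R)$ and let $R\to\infty$ --- the commutator terms are supported where $\varrho\sim R$, in which range the weight supplies the needed decay, so the truncation converges to $u$ in $H^m_\alpha$ --- and then, on the resulting compactly supported function, which has vanishing trace and (being supported in a bounded set) lies in the ordinary space $H^m(\RR^3_-)$, invoke the classical density of $\mathcal D(\RR^3_-)$ in $\{v\in H^m(\RR^3_-):\bm{\gamma}v=0\}$ (translate inward, then mollify).

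I expect the main obstacle to be the reassembly of the fractional (Gagliardo) part of the trace norm across the dyadic shells: keeping track of how the weight interacts with the nonlocal seminorm, and in particular verifying summability of the cross-shell contributions, is the delicate point, whereas the remaining ingredients are either classical on a fixed bounded Lipschitz domain or routine scaling bookkeeping, together with the continuity property \eqref{map: continuity}.
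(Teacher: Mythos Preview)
The paper does not prove this proposition at all: it is stated with attribution to \cite[Lemma~1.1]{Amrouche-Dambrine-Raudin} and left without proof, as a known result from the literature on weighted Sobolev spaces (see also \cite{Hanouzet}). So there is no argument in the paper to compare your proposal against.

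That said, your outline is a legitimate and essentially complete route to the result. The dyadic localization adapted to the weight, rescaling each half-shell to unit size, applying the classical trace inequality there, and reassembling is precisely the standard mechanism for transferring unweighted Sobolev estimates to the weighted setting; your scaling bookkeeping for the $L^2$ part checks out, and the near-diagonal/far-diagonal split for the Gagliardo seminorm is the right way to handle the nonlocal term (the geometric decay of the cross-shell kernel is indeed what makes the far part summable). The reductions via \eqref{map: continuity}, the Taylor-type lifting for surjectivity, and the truncate-then-mollify argument for the kernel are all standard. The only caveat is that the paper defines the fractional spaces $H^s_\alpha$ only for $0<s<2$, so for $m\ge3$ your inductive reduction implicitly relies on the obvious extension of \eqref{eq:def_fractionalSobolev2} to larger~$s$; this is harmless but should be noted. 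In short: your proposal is a correct sketch of a proof the paper chose to import rather than supply.
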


	
\section{The direct problem: formulation and well-posedness}\label{sec: the direct problem}

In this section, we formulate the direct problem and study its well posedness. We begin by discussing in more detail 
the assumptions we make on the geometry of the dislocation surface $S$, on the displacement jump $\bg$ across $S$, and on the 
elasticity tensor $\CC$, which form the data for the direct problem.

\subsection{Main assumptions and a priori information} \label{sec:main_assumptions}

We recall that we model the dislocation surface  $S$ by  an open, bounded, oriented Lipschitz surface such that 
	\begin{equation}
		\overline{S}\subset \mathbb{R}^3_-.
	\end{equation}
We assume that  the interior of the Earth is an isotropic, inhomogeneous elastic medium. The associated elasticity tensor 
$\mathbb{C}=\mathbb{C}(\bm{x})$,  $\bm{x}\in\mathbb{R}^3_-$, is then of the form:
	\begin{equation}\label{ass: elast tensor}
		 \mathbb{C}(\bm{x})=\lambda(\bm{x})\mathbf{I}\otimes \mathbf{I}+2\mu(\bm{x})\mathbb{I},\qquad 
		 \forall\bm{x}\in\mathbb{R}^3_-,
	\end{equation}
where $\lambda=\lambda(\bm{x})$ and $\mu=\mu(\bm{x})$ are the Lam\'e coefficients. We suppose the Lam\'e 
parameters to have Lipschitz regularity, that is, there exists $M>0$ such that 
	\begin{equation}\label{ass: c1,1 regularity}
		\begin{aligned}
		\|\mu\|_{C^{0,1}(\overline{\mathbb{R}^3_-})}+\|\lambda\|_{C^{0,1}(\overline{\mathbb{R}^3_-})}\leq M,
		\end{aligned}
	\end{equation} 
with $\|\cdot\|_{C^{0,1}(\overline{\mathbb{R}^3_-)}}=\|\cdot\|_{L^{\infty}(\mathbb{R}^3_-)}+\|\nabla\cdot\|_{L^{\infty}
(\mathbb{R}^3_-)}$.
Moreover, we require that there exists a constant $C>0$ such that
	\begin{equation}\label{eq: decay_cond_lame}
		|\nabla \lambda|\leq \frac{C}{\varrho},\qquad  |\nabla \mu|\leq \frac{C}{\varrho},\qquad \text{a.e. in}\,
		\overline{\mathbb{R}^3_-},
	\end{equation} 
where $\varrho$ is the weight defined in \eqref{eq: weight}.	
Finally, we assume the strong convexity condition for the elasticity tensor, i.e., there exist two positive constants 
$\alpha_0, \beta_0$ such that
	\begin{equation}\label{ass: strong conv}
		\mu(\bm{x})\geq \alpha_0>0,\qquad\qquad 3\lambda(\bm{x})+2\mu(\bm{x})\geq \beta_0>0,\quad \forall 
		\bm{x}\in \mathbb{R}^3_-.
	\end{equation}
This condition implies that $\CC$ defines a positive-definite quadratic form on symmetric matrices: {For all $\widehat{\mathbf{A}}\in\mathbb{R}^{3x3}$ we have} 
	\begin{equation}\label{eq: strong convexity}
		\mathbb{C}(\bm{x})\widehat{\mathbf{A}}:\widehat{\mathbf{A}}\geq c |\widehat{\mathbf{A}}|, \qquad 
		\forall\bm{x}\in\mathbb{R}^3_-,
	\end{equation}
for $c=c(\alpha_0,\beta_0)>0$.

\subsection{The transmission problem as a source problem}

The presence of a non-trivial jump $\bg$ for the displacement $\bu$ across the dislocation leads to reformulate the 
transmission-boundary-value problem  \eqref{eq: transm_problem_intro} as boundary-value problem with rough 
interior source. In this way, we are able to prove an existence and uniqueness result for the solution globally in 
$\mathbb{R}^3_-$. This approach has been proposed in \cite{ColliFranzone-Guerri-Magenes} for a similar 
transmission problem, but in the case of {the Laplace operator}. 

We recall Problem  \eqref{eq: transm_problem_intro}  here for the reader's sake:
		\begin{equation}\label{eq: Pu}
			\begin{cases}
				\textrm{\textup{div}}\, (\mathbb{C}\widehat{\nabla}\bm{u})=\bm{0},& \textrm{in}\,\, 
				\mathbb{R}^3_- \setminus \overline{S},\\
				(\mathbb{C}\widehat{\nabla}\bm{u})\bm{e}_3=\bm{0}, & \textrm{on}\,\, \{x_3=0\}, \\
				[\bm{u}]_{S}=\bm{g}, \\
				[(\mathbb{C}\widehat{\nabla}\bm{u})\bm{n}]_{S}=\bm{0} ,\\
			\end{cases}
		\end{equation}
where $\bm{e}_3=(0,0,1)$ is the  outward unit normal vector on $\{x_3=0\}$, $\bm{n}$ is a unit normal vector on $S$, and 
$\bm{g}$ is a vector field on $S$ such that
\begin{equation}
		\bm{g}\in H^{1/2}(S).
\end{equation} 		
We rewrite this problem in the form
	\begin{equation}\label{le: distr_sol}
		\begin{cases}
				\textrm{\textup{div}}\, (\mathbb{C}\widehat{\nabla}\bm{u})=\bm{f}_S, & \textrm{in}\,\, 
				\mathbb{R}^3_-,\\
				(\mathbb{C}\widehat{\nabla}\bm{u})\bm{e}_3=\bm{0}, & \textrm{on}\,\, \{x_3=0\},\\
		\end{cases}
	\end{equation} 	
where
	\begin{equation}\label{eq: source_term}
			\bm{f}_{S}=\textrm{div}(\mathbb{C}(\bm{g}\otimes  \bm{n})\delta_{S}).
	\end{equation}
Above, $\delta_{S}$ is the distribution on $\RR^3$ defined by:
\begin{equation}\label{eq: def_delta}
     	\langle\delta_S, \bm{\phi}\rangle = \int_S \bm{\phi}(\bm{x})\, d\sigma(\bm{x}), \qquad \forall \bm{\phi}\in D(\RR^3),
\end{equation}
and, if $h\in L^1(S)$, the distribution $h\, \delta_S$ is defined by 
\begin{equation}\label{eq: def_hdelta}
  \langle h\, \delta_S, \bm{\phi}\rangle = \int_S h(\bm{x})\,\bm{\phi}(\bm{x})\, d\sigma(\bm{x}), \qquad \forall \bm{\phi}\in D(\RR^3),
\end{equation}
We  show the equivalence between \eqref{eq: Pu} and \eqref{le: distr_sol} later in Lemma \ref{lem: equivalence_two_problems}. Next, we establish the validity of Formula \eqref{eq: source_term} and investigate the regularity of the source term.

\begin{proposition}\label{prop: sobolev_space_source_term}
The source term $\bm{f}_S\in H^{-3/2-\varepsilon}(\mathbb{R}^3_-)$. 
\end{proposition}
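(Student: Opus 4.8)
The plan is to unwind the definition of $\bm{f}_S$ as a distribution, transfer the divergence onto the test function, and then control the resulting surface integral by a trace estimate on the Lipschitz surface $S$. Since $\overline{S}$ is compact and at positive distance from $\{x_3=0\}$, the distribution $\bm{f}_S$ has compact support in the interior of $\RR^3_-$: there is no interaction with the boundary of the half-space, the weight in the definition of the weighted spaces is immaterial here (it is bounded above and below on a neighbourhood of $\overline{S}$, and multiplying a test function by a fixed cutoff equal to $1$ near $\overline{S}$ does not change the pairing), and it therefore suffices to estimate $\langle \bm{f}_S,\bm{\phi}\rangle$ for $\bm{\phi}\in\cD(\RR^3_-)$ by a constant times $\|\bm{\phi}\|_{H^{3/2+\varepsilon}(\RR^3_-)}$. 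Moreover, since the spaces $H^{-3/2-\varepsilon}$ only enlarge as $\varepsilon$ grows, we may assume throughout that $0<\varepsilon<1/2$.

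First I would record the pointwise algebra coming from \eqref{ass: elast tensor}: componentwise,
\[
   \big(\mathbb{C}(\bm{g}\otimes\bm{n})\big)_{ij} = \lambda\,(\bm{g}\cdot\bm{n})\,\delta_{ij} + \mu\,(g_i n_j + g_j n_i),
\]
so, using $|\bm{n}|=1$ a.e.\ on $S$, the Lipschitz bound \eqref{ass: c1,1 regularity}, and the embedding $H^{1/2}(S)\hookrightarrow L^2(S)$,
\[
   \|\mathbb{C}(\bm{g}\otimes\bm{n})\|_{L^2(S)} \;\le\; C\big(\|\lambda\|_{L^\infty(\RR^3_-)}+\|\mu\|_{L^\infty(\RR^3_-)}\big)\,\|\bm{g}\|_{L^2(S)} \;\le\; C(M)\,\|\bm{g}\|_{H^{1/2}(S)}.
\]
In particular each component of $\mathbb{C}(\bm{g}\otimes\bm{n})$ lies in $L^1(S)$, so $\mathbb{C}(\bm{g}\otimes\bm{n})\,\delta_S$ is a well-defined matrix-valued distribution via \eqref{eq: def_hdelta}, and unwinding \eqref{eq: source_term} gives, for every $\bm{\phi}\in\cD(\RR^3_-)$,
\[
   \langle \bm{f}_S,\bm{\phi}\rangle \;=\; -\,\big\langle \mathbb{C}(\bm{g}\otimes\bm{n})\,\delta_S,\, \nabla\bm{\phi}\big\rangle \;=\; -\int_S \mathbb{C}(\bm{g}\otimes\bm{n}):\nabla\bm{\phi}\, d\sigma.
\]

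The heart of the argument is then the trace estimate $\|\nabla\bm{\phi}\|_{L^2(S)}\le C\,\|\bm{\phi}\|_{H^{3/2+\varepsilon}(\RR^3)}$ for $\bm{\phi}$ extended by zero to $\RR^3$. Indeed, if $\bm{\phi}\in H^{3/2+\varepsilon}(\RR^3)$ then $\nabla\bm{\phi}\in H^{1/2+\varepsilon}(\RR^3)$; covering the bounded Lipschitz surface $S$ by finitely many graph patches and using that bi-Lipschitz changes of variables are bounded on $H^{s}$ for $|s|\le 1$, the classical trace theorem (valid on Lipschitz surfaces for $1/2<s<3/2$, and here $s=1/2+\varepsilon<1$) yields a bounded restriction $H^{1/2+\varepsilon}(\RR^3)\to H^{\varepsilon}(S)\hookrightarrow L^2(S)$. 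Combining this with the Cauchy--Schwarz inequality in the surface integral and the bound from the previous step,
\[
   |\langle \bm{f}_S,\bm{\phi}\rangle| \;\le\; \|\mathbb{C}(\bm{g}\otimes\bm{n})\|_{L^2(S)}\,\|\nabla\bm{\phi}\|_{L^2(S)} \;\le\; C(M)\,\|\bm{g}\|_{H^{1/2}(S)}\,\|\bm{\phi}\|_{H^{3/2+\varepsilon}(\RR^3_-)},
\]
for all $\bm{\phi}\in\cD(\RR^3_-)$ (using that extension by zero is bounded $\mathring{H}^{3/2+\varepsilon}(\RR^3_-)\to H^{3/2+\varepsilon}(\RR^3)$ for the non-integer index $3/2+\varepsilon$). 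By density, this identifies $\bm{f}_S$ as a bounded linear functional on $\mathring{H}^{3/2+\varepsilon}(\RR^3_-)$, i.e.\ $\bm{f}_S\in H^{-3/2-\varepsilon}(\RR^3_-)$.

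I expect the trace step to be the only delicate point: because $S$ is merely Lipschitz and open, one must justify that the restriction of an $H^{1/2+\varepsilon}$ function to $S$ is meaningful in $L^2(S)$ with the quantitative bound above --- this is precisely where the constraint $\varepsilon<1/2$ and the invariance of $H^{s}$ under bi-Lipschitz maps for $|s|\le 1$ are used, and it is the reason that $H^{-3/2-\varepsilon}$, rather than $H^{-3/2}$, is the sharp regularity class for this source. The remaining ingredients --- transferring the divergence, the algebraic $L^2(S)$ bound on $\mathbb{C}(\bm{g}\otimes\bm{n})$, and the reduction to a neighbourhood of $\overline{S}$ --- are routine.
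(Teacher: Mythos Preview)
Your argument is correct and follows essentially the same route as the paper: both show $\mathbb{C}(\bm{g}\otimes\bm{n})\in L^2(S)$, then use a trace estimate to bound $\|\bm{\varphi}\|_{L^2(S)}\le C\|\bm{\varphi}\|_{H^{1/2+\varepsilon}}$ and conclude by Cauchy--Schwarz. The only minor difference is that the paper embeds $S$ in the boundary of an auxiliary bounded Lipschitz domain $\Omega\subset\RR^3_-$ and invokes the standard trace theorem on the closed surface $\partial\Omega$, whereas you apply the trace theorem directly on the open surface $S$ via local graph patches; both are valid, the paper's device simply sidesteps any discussion of what happens near $\partial S$.
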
	

\begin{proof}
It is sufficient to show that  $\mathbb{C}(\bm{g}\otimes\bm{n})\delta_{S}$ is a distribution in the space 
$ H^{-1/2-\varepsilon}(\mathbb{R}^3_-)$, for all $\varepsilon>0$. To this end, we introduce the function 
$\mathbf{\Psi}:=\mathbb{C}(\bm{g}\otimes\bm{n})$, and observe that it lies  in $L^2(S)$, since $\bm{n}\in 
L^\infty(S)$, $\mathbb{C}$ is bounded and continuous on $S$ by Assumption \eqref{ass: c1,1 regularity},  and 
$\bm{g}$ is assumed to be in  $H^{1/2}(S)$. Next, we fix a bounded Lipschitz domain $\Omega$ such that $\overline{\Omega}\subset 
\mathbb{R}^3_-$ and $S\subset\partial\Omega$. The existence of this domain is guaranteed by the assumed regularity on $S$ and the hypothesis that $S$ is at positive distance from $\{x_3=0\}$. For all $\bm{\varphi}\in H^{1/2+\varepsilon}
(\mathbb{R}^3_-)$, we have 
	\begin{equation*}
		\begin{aligned}
			\Bigg|\int\limits_{S}\mathbf{\Psi}(\bm{x})\bm{\varphi}(\bm{x})\, d\sigma(\bm{x})\Bigg|\leq  \|
			\mathbf{\Psi}\|_{L^2(S)}\|\bm{\varphi}\|_{L^2(S)}&\leq c \, \|\bm{\varphi}\|
			_{L^2(\partial\Omega)}\\
			&\leq c\|\bm{\varphi}\|_{H^{\varepsilon}(\partial\Omega)}
			\leq c \|\bm{\varphi}\|_{H^{1/2+\varepsilon}(\mathbb{R}^3_-)},	
		\end{aligned}
	\end{equation*}
where  the fourth inequality follows by the Trace Theorem. Therefore, by definition  $\mathbf{\Psi}\delta_{S}\in 
H^{-1/2-\varepsilon}(\mathbb{R}^3_-)$, which implies that $\bm{f}_S \in H^{-3/2-\varepsilon}(\RR^3_-)$.
\end{proof}

The well-posedness of Problem \eqref{le: distr_sol}, with source term given by \eqref{eq: source_term}, will be
obtained  by duality and interpolation, once we have regularity for the source problem with source term in spaces 
of positive  regularity. To do so, we study the general source problem:
\begin{equation}\label{eq: reg_prob}
	\begin{cases}
		\textrm{\textup{div}}\, (\mathbb{C}\widehat{\nabla}\bm{u})=\bm{f}, & \textrm{in}\,\, \mathbb{R}^3_-, \\
		(\mathbb{C}\widehat{\nabla}\bm{u})\bm{e}_3=\bm{0}, & \textrm{on}\,\, \{x_3=0\}.\\
	\end{cases}
\end{equation} 	
We follow the well-known approach of Lions and Magenes  \cite{Lions-Magenes} to establish regularity for this problem. This approach was already adapted to the framework of weighted spaces in a half space in  \cite{Hanouzet} and  \cite{Amrouche-Dambrine-Raudin} for the Dirichlet and Neumann problems. However, our source problem is more  singular, and we assume lower regularity on the coefficients.

Our strategy involves strong, weak, and very weak solutions, duality and interpolation. 
To introduce the proper functional setting,  we need to define some auxiliary spaces.

\begin{definition}
Let $V(\mathbb{R}^3_-)$ be the closed subspace of $H^2_1(\mathbb{R}^3_-)$ given by
		\begin{equation}\label{eq: space V}
			V(\mathbb{R}^3_-)=\Big\{\bm{v}\in H^2_1(\mathbb{R}^3_-)\,\, / \,\, 
			(\mathbb{C}\widehat{\nabla}\bm{v})\bm{e}_3=\bm{0}\,\, \textrm{on}\,\, \{x_3=0\}  \Big\},
		\end{equation}
with norm $\|\bm{v}\|_{V(\mathbb{R}^3_-)}=\|\bm{v}\|_{H^2_1(\mathbb{R}^3_-)}$.		
We denote the dual space of $V(\mathbb{R}^3_-)$ with $V'(\mathbb{R}^3_-)$.
\end{definition} 

\begin{remark}
We note that if $\bm{v}\in V(\mathbb{R}^3_-)$ then in particular $\bm{v}\in H^0_{-1}(\mathbb{R}^3_-)$ which 
implies that no infinitesimal rigid motion $\bm{v}=\mathbf{A}\bm{x}+\bm{c}$, where 
$\mathbf{A}\in\mathbb{R}^{3\times 3}$ is a skew matrix and $\bm{c}\in\mathbb{R}^3$, can be an element of 
$V(\mathbb{R}^3_-)$. 
\end{remark}

\begin{definition}
Let
	\begin{equation}\label{eq: space E^2_0}
	E_0(\mathbb{R}^3_-)=\Big\{\bm{u}\in H^0_{-1}(\mathbb{R}^3_-)\,\, /\,\, \textup{\textrm{div}}
	(\mathbb{C}\widehat{\nabla}\bm{u}) \in V'(\mathbb{R}^3_-) \Big\},
	\end{equation}
equipped with the norm
	\begin{equation*}
		\|\bm{u}\|_{E_0(\mathbb{R}^3_-)}=\|\bm{u}\|_{H^0_{-1}(\mathbb{R}^3_-)}+ \|\textup{\text{div}}
		(\mathbb{C}\widehat{\nabla}\bm{u})\|_{V'(\mathbb{R}^3_-)}.
	\end{equation*}		
\end{definition}

For the reader's convenience we give here a sketch of the strategy we use to establish well-posedness of \eqref{eq: 
reg_prob}:
\begin{enumerate}[label=(\roman*), ref=(\roman*)]
\item we prove that, for any $\bm{f}\in (H^1_0)'(\mathbb{R}^3_-)$, there exists a unique weak solution $\bm{u}\in 
H^1_0(\mathbb{R}^3_-)$;  \label{i.1}
\item if $\bm{f}\in H^0_1(\mathbb{R}^3_-)$, we show that the weak solution in \ref{i.1} is, in fact, a strong solution 
$\bm{u}\in H^2_1(\mathbb{R}^3_-)$; \label{i.2} 
\item from \ref{i.2}  by a duality argument, we establish that, for any $\bm{f}\in V'(\mathbb{R}^3_-)$, there exists a 
unique solution $\bm{u}\in H^0_{-1}(\mathbb{R}^3_-)$, that is, $\bm{u}\in E_0(\mathbb{R}^3_-)$; \label{i.3}
\item the well-posedness of \eqref{le: distr_sol}  with source term \eqref{eq: source_term} follows by interpolating the  results  obtained in \ref{i.1} and \ref{i.3}. \label{i.4}   
\end{enumerate}

\subsection{Well-posedness}

In this section we prove the well-posedness of Problem \eqref{le: distr_sol} with $\bm{f}_S$ given by \eqref{eq: 
source_term}. 
Following the strategy presented in the scheme of the previous section, we start by proving \ref{i.1}.

\begin{theorem}[Weak solution]\label{lem: weak_sol}
Under Assumptions \eqref{ass: c1,1 regularity}, \eqref{eq: strong convexity}, for any $\bm{f}\in \
(H^1_0)'(\mathbb{R}^3_-)$ there exists a unique solution $\bm{u}\in H^1_0(\mathbb{R}^3_-)$ of Problem 
\eqref{eq: reg_prob} such that
	\begin{equation}\label{eq: estimate solution in H^1_0}
		\|\bm{u}\|_{H^1_0(\mathbb{R}^3_-)}\leq C \|\bm{f}\|_{(H^1_0)'(\mathbb{R}^3_-)}.
	\end{equation} 
\end{theorem}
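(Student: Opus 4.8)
The plan is to prove existence and uniqueness by the standard Lax--Milgram argument applied to the natural bilinear form associated to \eqref{eq: reg_prob} on the Hilbert space $H^1_0(\RR^3_-)$. First I would identify the correct weak formulation: multiplying the equation $\mathrm{div}(\CC\widehat{\nabla}\bu)=\bm f$ by a test function $\bm v\in H^1_0(\RR^3_-)$, integrating over $\RR^3_-$ and integrating by parts, the boundary term on $\{x_3=0\}$ vanishes precisely because of the traction-free condition $(\CC\widehat{\nabla}\bu)\bm e_3=\bm 0$, so one is led to seek $\bu\in H^1_0(\RR^3_-)$ such that
\begin{equation*}
a(\bu,\bm v):=\int_{\RR^3_-}\CC\widehat{\nabla}\bu:\widehat{\nabla}\bm v\,d\bm x = -\langle \bm f,\bm v\rangle_{((H^1_0)',H^1_0)}\qquad\forall\,\bm v\in H^1_0(\RR^3_-).
\end{equation*}
Here I use that $\CC\widehat{\nabla}\bu:\nabla\bm v = \CC\widehat{\nabla}\bu:\widehat{\nabla}\bm v$ by the symmetry of the Lam\'e tensor. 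Note that no weight enters the bilinear form itself; the weights are hidden in the choice of space $H^1_0(\RR^3_-)$, whose norm is equivalent to $\|\nabla\,\cdot\,\|_{L^2(\RR^3_-)}$ by the Poincar\'e inequality \eqref{eq: Poincare}.

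Next I would check the three hypotheses of Lax--Milgram. Boundedness of $a$ is immediate from \eqref{ass: c1,1 regularity}: $|a(\bu,\bm v)|\le C M\|\nabla\bu\|_{L^2}\|\nabla\bm v\|_{L^2}\le C'\|\bu\|_{H^1_0}\|\bm v\|_{H^1_0}$. Continuity of the right-hand side functional $\bm v\mapsto -\langle\bm f,\bm v\rangle$ is exactly the statement that $\bm f\in (H^1_0)'(\RR^3_-)$, with norm bounded by $\|\bm f\|_{(H^1_0)'}$. The key point is coercivity. By the strong convexity assumption \eqref{eq: strong convexity}, $a(\bu,\bu)=\int_{\RR^3_-}\CC\widehat{\nabla}\bu:\widehat{\nabla}\bu\,d\bm x\ge c\|\widehat{\nabla}\bu\|^2_{L^2(\RR^3_-)}$. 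To upgrade the symmetrized gradient to the full $H^1_0$ norm I invoke the weighted Korn inequality \eqref{eq: Korn}, which is available precisely because every $\bu\in H^1_0(\RR^3_-)\subset H^0_{-1}(\RR^3_-)$, so there are no rigid-motion obstructions: $\|\bu\|_{H^1_0(\RR^3_-)}\le C\|\widehat{\nabla}\bu\|_{L^2(\RR^3_-)}$. Combining, $a(\bu,\bu)\ge c\,C^{-2}\|\bu\|_{H^1_0(\RR^3_-)}^2$, which is coercivity. Lax--Milgram then yields a unique $\bu\in H^1_0(\RR^3_-)$ solving the weak formulation, together with the estimate $\|\bu\|_{H^1_0}\le C\|\bm f\|_{(H^1_0)'}$ from coercivity and boundedness of the functional; this is \eqref{eq: estimate solution in H^1_0}.

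Finally I would record that the weak solution is a solution of \eqref{eq: reg_prob} in the appropriate distributional sense: taking $\bm v\in\cD(\RR^3_-)$ recovers $\mathrm{div}(\CC\widehat{\nabla}\bu)=\bm f$ in $\mathcal D'(\RR^3_-)$, and then testing against general $\bm v\in H^1_0(\RR^3_-)$ and integrating by parts shows the natural (co-)normal trace $(\CC\widehat{\nabla}\bu)\bm e_3$ vanishes weakly on $\{x_3=0\}$, interpreted via the trace/duality pairing of Proposition \ref{prop: trace theor}. I expect the main obstacle to be purely a matter of bookkeeping rather than of substance: one must make sure that the weighted Poincar\'e and Korn inequalities \eqref{eq: Poincare}--\eqref{eq: Korn} genuinely apply on $H^1_0(\RR^3_-)$ (they do, by the density of $\cD(\RR^3_-)$ used to define this space and the cited references \cite{Amrouche-Necasova,Hanouzet,Amrouche-Dambrine-Raudin}), and that the decay assumption \eqref{eq: decay_cond_lame} on $\nabla\lambda,\nabla\mu$ is not actually needed here — it will only be used later for the higher-regularity step \ref{i.2}. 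Modulo these checks, the proof is a direct application of Lax--Milgram with coercivity supplied by strong convexity plus the weighted Korn inequality.
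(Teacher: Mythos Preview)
Your proposal is correct and follows essentially the same approach as the paper: derive the weak formulation by testing and integrating by parts (using the traction-free condition to kill the boundary term), then apply Lax--Milgram with continuity from \eqref{ass: c1,1 regularity}, coercivity from strong convexity \eqref{eq: strong convexity} plus the weighted Korn inequality \eqref{eq: Korn}, and continuity of the right-hand side by definition of $(H^1_0)'$. Your observation that \eqref{eq: decay_cond_lame} is not needed at this stage is also in agreement with the paper, which invokes only \eqref{ass: c1,1 regularity} and \eqref{eq: strong convexity} here; one small slip is that density in $H^1_0(\RR^3_-)$ is of $\mathcal{D}(\overline{\RR^3_-})$, not $\mathcal{D}(\RR^3_-)$, but this does not affect the argument.
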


\begin{remark}
Above and in Theorems \ref{th: sol in H^2_1}, \ref{th: well-posed in H^0_-1}, and \ref{th: very_weak_sol}, the 
constants generally depend on the constants in Assumptions \eqref{ass: c1,1 regularity}, 
\eqref{eq: decay_cond_lame}, and  \eqref{ass: strong conv},  such as $M$ and $c$, but are independent of $S$ and 
$\bg$.\\ 
\end{remark}

\begin{proof}
We introduce the bilinear form $a: H^1_0(\mathbb{R}^3_-)\times H^1_0(\mathbb{R}^3_-)\to \mathbb{R}$ associated to the Lam\'e  operator:
	\begin{equation}\label{eq: bil_form_a}
		a({\bm{w}},\bm{v}) := \int_{\mathbb{R}^3_-}\mathbb{C}\widehat{\nabla}{\bm{w}}: \widehat{\nabla}\bm{v}\, 
		d\bm{x}.
	\end{equation}
Since $\mathcal{D}(\overline{\mathbb{R}^3_-})$ is dense in $H^1_0(\mathbb{R}^3_-)$, we can assume in the 
calculations below that  $\bm{w}\in\mathcal{D}(\overline{\mathbb{R}^3_-})$.   We test the first equation in \eqref{eq: reg_prob} with  $\bm{v}\in  H^1_0(\mathbb{R}^3_-)$, {and temporarily substitute $\bm{w}$ for $\bm{u}$}, we then  integrate by parts once {with respect to} ${\bm{w}}$ over $\mathbb{R}^3_-$ on the left-hand side, using the  traction-free boundary condition, to obtain:
{\begin{equation*}
		\int_{\mathbb{R}^3_-}\mathbb{C}\widehat{\nabla}\bm{w}: \widehat{\nabla}\bm{v}\, d\bm{x}= -
		\langle  \bm{f}, \bm{v}\rangle_{((H^1_0)'(\RR^3_-),H^1_0(\RR^3_-))}, \qquad \forall  \bm{v}\in  H^1_0(\mathbb{R}^3_-).
	\end{equation*}
For the density result of $\mathcal{D}(\overline{\mathbb{R}^3_-})$ in $H^1_0(\mathbb{R}^3_-)$, the previous expression also holds for $\bm{u}\in H^1_0(\mathbb{R}^3_-)$, hence the weak formulation associated to \eqref{eq: reg_prob} is}	
	\begin{equation*}
          \int_{\mathbb{R}^3_-}\mathbb{C}\widehat{\nabla}\bm{u}: \widehat{\nabla}\bm{v}\, d\bm{x}= -
         \langle  \bm{f}, \bm{v}\rangle_{((H^1_0)'(\RR^3_-),H^1_0(\RR^3_-))}, \qquad \forall  \bm{v}\in  H^1_0(\mathbb{R}^3_-).
	\end{equation*}
We have hence the following  variational formulation of Problem \eqref{eq: reg_prob}:\\
\textit{find $\bm{u}\in H^1_0(\mathbb{R}^3_-)$ such that  
	\begin{equation*}
		a(\bm{u},\bm{v})=l(\bm{v}),\quad \forall\bm{v}\in H^1_0(\mathbb{R}^3_-),
	\end{equation*}
where $l: H^1_0(\mathbb{R}^3_-)\to \mathbb{R}$ is the  linear functional given by}
	\begin{equation}\label{eq: func}
		l(\bm{v})= - \langle  \bm{f}, \bm{v}\rangle_{((H^1_0)'(\RR^3_-),H^1_0(\RR^3_-))}.
	\end{equation}
The assertion of the theorem then follows by applying the Lax-Milgram theorem, once the continuity and coercivity of the bilinear form $a$ and the continuity of the functional $l$ are established. {We stress that, due to the weighted Poincar\'e and Korn inequalies 
\eqref{eq: Poincare}, the coercivity of the bilinear form $a(\bm{u},\bm{v})$ with respect to the inner product in the Hilbert space $H^1_0(\RR^3_-)$ is equivalent to the usual coercivity of $a$ with respect to the standard $H^1$-seminorm, if $\CC$ is strongly convex.}

\noindent\textit{Continuity and coercivity of \eqref{eq: bil_form_a}}:\  From \eqref{ass: c1,1 regularity}, we have
	\begin{equation*}
		\begin{aligned}
		|a(\bm{u},\bm{v})|=\Bigg|\int_{\mathbb{R}^3_-}\mathbb{C}\widehat{\nabla}\bm{u}:
		\widehat{\nabla}\bm{v}\, d\bm{x}\Bigg|&\leq c \|\widehat{\nabla}\bm{u}\|_{L^2(\mathbb{R}^3_-)}\|
		\widehat{\nabla}\bm{v}\|_{L^2(\mathbb{R}^3_-)}\\
		&\leq c \|\bm{u}\|_{H^1_0(\mathbb{R}^3_-)}\|\bm{v}\|_{H^1_0(\mathbb{R}^3_-)}.
		\end{aligned}
	\end{equation*}	
Coercivity follows from the strong convexity of $\mathbb{C}$ (Assumptions \eqref{ass: strong conv} and \eqref{eq: 
strong convexity}), by applying the weighted Korn's inequality \eqref{eq: Korn}:
	\begin{equation*}
		\begin{aligned}
			a(\bm{u},\bm{u})=\int_{\mathbb{R}^3_-}\mathbb{C}\widehat{\nabla}\bm{u}:
			\widehat{\nabla}\bm{u}\, d\bm{x}&\geq c \|\widehat{\nabla}\bm{u}\|^2_{L^2(\mathbb{R}^3_-)}\\
			&\geq c \|{\nabla}\bm{u}\|^2_{L^2(\mathbb{R}^3_-)}\geq c\|\bm{u}\|^2_{H^1_0(\mathbb{R}^3_-)}.
		\end{aligned}
	\end{equation*} 
\textit{Continuity of \eqref{eq: func}:}\ $l$ is by definition an element of the dual of $H^1_0(\RR^3_-)$, hence it defines a continuous linear functional on $H^1_0$ via the duality form.

The  conclusion now follows from the Lax-Milgram Theorem.		
\end{proof}

Next  we establish \ref{i.2}, i.e., the fact that  the unique weak solution is actually a strong solution, if the source 
term in \eqref{eq: reg_prob} belongs to $H^0_1(\mathbb{R}^3_-)$.

\begin{theorem}[Strong solution]\label{th: sol in H^2_1}
Under the assumptions of Theorem \ref{lem: weak_sol} and Assumption \eqref{eq: decay_cond_lame} on the 
decay of the Lam\'e coefficients, if the source $\bm{f} \in  H^0_1(\mathbb{R}^3_-)$,  Problem \eqref{eq: reg_prob} 
admits a unique solution $\bm{u}\in H^2_1(\mathbb{R}^3_-)$, satisfying
		\begin{equation}\label{eq: stab_est_H^2_1}
			\|\bm{u}\|_{H^2_1(\mathbb{R}^3_-)}\leq c \|\bm{f}\|_{H^0_1(\mathbb{R}^3_-)}.
		\end{equation}	
\end{theorem}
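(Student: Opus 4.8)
The plan is to upgrade the weak solution supplied by Theorem~\ref{lem: weak_sol} to an $H^2_1$ solution by a local elliptic regularity estimate, run on a dyadic family of shells and then summed against the weight $\varrho$. Uniqueness is free: by \eqref{map: continuity} and the weighted Poincar\'e inequality \eqref{eq: Poincare} one has the continuous embedding $H^2_1(\mathbb{R}^3_-)\hookrightarrow H^1_0(\mathbb{R}^3_-)$, so any $H^2_1$ solution of \eqref{eq: reg_prob} is \emph{the} weak solution of Theorem~\ref{lem: weak_sol}. For existence, observe that $H^0_1(\mathbb{R}^3_-)\hookrightarrow (H^1_0)'(\mathbb{R}^3_-)$: for $\bm f\in H^0_1$ and $\bm v\in H^1_0$ one has $|\langle \bm f,\bm v\rangle|\le \|\varrho\,\bm f\|_{L^2}\,\|\varrho^{-1}\bm v\|_{L^2}\le \|\bm f\|_{H^0_1}\|\bm v\|_{H^1_0}$. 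Hence Theorem~\ref{lem: weak_sol} already produces a solution $\bm u\in H^1_0(\mathbb{R}^3_-)$ with $\|\bm u\|_{H^1_0}\le C\|\bm f\|_{H^0_1}$, and it only remains to prove the weighted second-order bound.

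The engine is a scale-invariant local second-order estimate for the Lam\'e system with Lipschitz coefficients: there is a constant $C_0$ depending only on the convexity constant in \eqref{eq: strong convexity} and on $M$ in \eqref{ass: c1,1 regularity} such that, whenever $\mathrm{div}(\mathbb{C}\widehat\nabla\bm u)=\bm f$ on $B_R(\bm x_0)\subset\mathbb{R}^3_-$,
\[
\|\nabla^2\bm u\|_{L^2(B_{R/2}(\bm x_0))}\le C_0\Big(\|\bm f\|_{L^2(B_R(\bm x_0))}+R^{-1}\|\nabla\bm u\|_{L^2(B_R(\bm x_0))}+R^{-2}\|\bm u\|_{L^2(B_R(\bm x_0))}\Big),
\]
together with the companion half-ball estimate on $B_R^-(\bm x_0)$, $\bm x_0\in\{x_3=0\}$, in which the traction-free condition $(\mathbb{C}\widehat\nabla\bm u)\bm e_3=\bm 0$ is imposed on the flat part. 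Both are proved by the Nirenberg difference-quotient method: tangential difference quotients of $\bm u$ are estimated from the variational identity using strong convexity for the quadratic form and the Lipschitz bound \eqref{ass: c1,1 regularity} to absorb the terms produced by differencing $\mathbb{C}$ (this is exactly where Lipschitz regularity is sharp), and the remaining pure-normal second derivatives are recovered algebraically from the equation, again via ellipticity. The key point is that $C_0$ is unchanged under the dilation $\bm x=R\bm y$: the convexity constant is scale invariant, and the Lipschitz seminorm of the rescaled tensor $\mathbb{C}(R\,\cdot)$ on a unit (half-)ball sitting at distance $\sim R$ from the origin equals $R\,\|\nabla\mathbb{C}\|_{L^\infty}$, which is bounded by a fixed constant by the decay hypothesis \eqref{eq: decay_cond_lame}. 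This is precisely the role of \eqref{eq: decay_cond_lame}.

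Now cover $\mathbb{R}^3_-$ by the dyadic shells $C_k=\{2^k\le|\bm x|\le 2^{k+1}\}\cap\overline{\mathbb{R}^3_-}$, $k\ge 0$, plus the bounded core $\{|\bm x|\le 1\}\cap\overline{\mathbb{R}^3_-}$. Each $C_k$ is covered by a number, bounded independently of $k$, of balls and boundary half-balls of radius $\sim 2^k$ whose doubles have bounded overlap and lie in a slightly larger shell $C_k'\subset C_{k-1}\cup C_k\cup C_{k+1}$; applying the estimate above on each with $R\sim 2^k$, using $\varrho\sim 2^k$ on $C_k$ and the uniformity of $C_0$, and squaring, gives
\[
\|\varrho\,\nabla^2\bm u\|^2_{L^2(C_k)}\le C\Big(2^{2k}\|\bm f\|^2_{L^2(C_k')}+\|\nabla\bm u\|^2_{L^2(C_k')}+2^{-2k}\|\bm u\|^2_{L^2(C_k')}\Big).
\]
Summing in $k$, and adding the core contribution (where $\varrho$ is comparable to a constant and ordinary interior and Neumann boundary regularity for Lipschitz-coefficient systems give the $H^2$ bound directly), and using $\sum_k 2^{2k}\|\bm f\|^2_{L^2(C_k')}\sim\|\bm f\|^2_{H^0_1}$ and $\sum_k 2^{-2k}\|\bm u\|^2_{L^2(C_k')}\sim\|\bm u\|^2_{H^0_{-1}}$, one obtains $\|\varrho\,\nabla^2\bm u\|^2_{L^2(\mathbb{R}^3_-)}\le C(\|\bm f\|^2_{H^0_1}+\|\nabla\bm u\|^2_{L^2(\mathbb{R}^3_-)}+\|\bm u\|^2_{H^0_{-1}})$. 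By $H^1_0\hookrightarrow H^0_{-1}$ and the bound $\|\bm u\|_{H^1_0}\le C\|\bm f\|_{H^0_1}$ from the first paragraph, this is $\le C\|\bm f\|_{H^0_1}$; the weighted Poincar\'e inequality \eqref{eq: Poincare} then yields $\|\bm u\|_{H^2_1(\mathbb{R}^3_-)}\le c\|\bm f\|_{H^0_1(\mathbb{R}^3_-)}$, and passing to the limit in the variational identity recovers the traction-free condition in the trace sense, i.e.\ $\bm u\in V(\mathbb{R}^3_-)$.

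The main obstacle is proving the local estimate at exactly Lipschitz regularity of the coefficients — especially its boundary version for the traction (Neumann-type) condition — and carrying out the dilation bookkeeping so that \eqref{eq: decay_cond_lame} makes the constants uniform across the dyadic scales; the rest is summation in the weighted norms.
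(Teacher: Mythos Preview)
Your argument is correct, but it follows a genuinely different route from the paper's. The paper works \emph{globally}: it writes down the boundary-value problem satisfied by $\varrho\,\partial_i\bm u$ for the tangential directions $i=1,2$, checks that the resulting source terms lie in $(H^1_0)'(\mathbb{R}^3_-)$ (this is where \eqref{eq: decay_cond_lame} enters, via the boundedness of $\varrho\,\partial_i\mathbb{C}$), and then applies Theorem~\ref{lem: weak_sol} again to conclude $\varrho\,\partial_i\bm u\in H^1_0$; the remaining derivative $\varrho\,\partial_3^2\bm u$ is recovered algebraically from the explicit form of the Lam\'e system. You instead \emph{localize}: unit-scale interior and conormal boundary $H^2$ estimates, rescaled so that \eqref{eq: decay_cond_lame} makes the Lipschitz seminorm of the dilated tensor uniformly bounded, then a dyadic summation against $\varrho$. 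The paper's approach is entirely self-contained within the variational framework and avoids invoking local regularity theory as a black box; yours is more modular and transparently isolates the role of the decay hypothesis as scale invariance, at the cost of taking the local Neumann-boundary $H^2$ estimate for Lipschitz-coefficient systems as input. Both uses of \eqref{eq: decay_cond_lame} are really the same fact in different clothing.
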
  
\begin{proof}
We begin by observing that, if $\bm{f}\in H^0_1(\RR^3_-)$, then $\bm{f}\in (H^1_0)'(\RR^3_-)$. In fact,  using that $\varrho(x)\geq 1$ and that $\bm{f}$ is now a locally integrable function, we can estimate \eqref{eq: func} as follows:
\begin{equation}\label{eq: finH0_1}
|l(\bm{v})|=\Bigg|\int_{\mathbb{R}^3_-} \bm{f}\cdot \bm{v}\, d\bm{x}\Bigg|\leq \|\varrho \bm{f}\|
_{L^2(\mathbb{R}^3_-)}\Big\|\frac{\bm{v}}{\varrho}\Big\|_{L^2(\mathbb{R}^3_-)}\leq \|\bm{f}\|
_{H^0_1(\mathbb{R}^3_-)}\|\bm{v}\|_{H^1_0(\mathbb{R}^3_-)}. 
\end{equation} 
By uniqueness of weak solutions, it is enough to show that we can bootstrap regularity for the source problem, if 
$\bm{f}$ is more regular. To bootstrap, we will prove that suitable weighted derivatives of the solutions satisfy a 
similar source problem. To be specific,  we study the source-boundary-value  problem formally satisfied by 
$\varrho\partial_i \bm{u}$, for $i=1,2$. Since we take only derivatives that are tangent to the boundary, we can 
show that this problem is in a form similar to  the original problem \eqref{eq: reg_prob}  (see 
\eqref{eq: bvp_rho_partial_u_new}). Theorem \ref{lem: weak_sol} then gives, again by uniqueness of solutions, 
that $\varrho\partial_i \bm{u}\in H^1_0(\mathbb{R}^3_-)$, which in turn means  by  \eqref{map: isomorph} that $\partial_i\bm{u}\in  H^1_1(\mathbb{R}^3_-)$ or, equivalently,  that $\varrho\nabla(\partial_i \bm{u})\in L^2(\mathbb{R}^3_-)$ . Lastly, by using the specific form of the Lam\'e system, we are able to prove that the regularity of the tangential  derivatives implies  $\varrho\partial^2_3\bm{u}\in L^2(\mathbb{R}^3_-)$ as well.
Therefore, we can conclude that  $\varrho\partial^2 \bm{u}\in L^2(\mathbb{R}^3_-)$.

\subsubsection*{First Step:}\ 
We seek to find the problem satisfied by $\varrho \partial_i \bm{u}$, for $i=1,2$, knowing that $\bm{u}$ satisfies 
\eqref{eq: reg_prob}. We proceed formally first. The manipulations below are justified {\em a posteriori}, given the regularity on $\bu$, $\varrho$, and the data.
We have that
	\begin{equation*}
			\textrm{div}(\mathbb{C}\widehat{\nabla}(\varrho \partial_i\bm{u}))=\varrho\partial_i\bm{f} + 
			\textrm{div}
			(\mathbb{C}\widehat{(\nabla\varrho\otimes\partial_i\bm{u})})+(\mathbb{C}\widehat{\nabla}
			(\partial_i\bm{u}))\nabla\varrho-\varrho\,\textrm{div}\,(\partial_i\mathbb{C}\widehat{\nabla}\bm{u}),
	\end{equation*}
and, since 
	\begin{equation*}
		\varrho\,\textrm{div}(\partial_i\mathbb{C}\widehat{\nabla}\bm{u})=\textrm{div}
                 (\varrho\partial_i\mathbb{C}\widehat{\nabla}\bm{u})-\partial_i\mathbb{C}\widehat{\nabla}\bm{u}\nabla
                 \varrho,
	\end{equation*}	
we find that 
	\begin{equation}\label{eq: equation_solved_rhopartial_i}
	\textrm{div}(\mathbb{C}\widehat{\nabla}(\varrho \partial_i\bm{u})-{\bf{G}}_i)= 
	\bm{F}_i,    
	\qquad 	\textrm{in}\,\mathbb{R}^3_-,
	\end{equation}
where 
	\begin{equation*}
		\bm{F}_i:=\varrho\partial_i\bm{f} +(\mathbb{C}\widehat{\nabla}
		(\partial_i\bm{u}))\nabla\varrho+\partial_i\mathbb{C}\widehat{\nabla}\bm{u}\nabla\varrho,\,\quad 
		\textrm{and}\quad {\bf{G}}_i:=\mathbb{C}\widehat{(\nabla\varrho\otimes\partial_i\bm{u})} - 
		\varrho\partial_i\mathbb{C}\widehat{\nabla}\bm{u}.
	\end{equation*}	
Next, from the Neumann boundary condition on $\bu$ in \eqref{eq: reg_prob}, it follows that	
\begin{equation*}
		\mathbb{C}\widehat{\nabla}
                (\varrho'\partial_i\bm{u})\bm{e}_3=\mathbb{C}\widehat{(\nabla\varrho'\otimes\partial_i\bm{u})}
                \bm{e}_3-\varrho'(\partial_i\mathbb{C}\widehat{\nabla}\bm{u})\bm{e}_3, \qquad \text{ for } x_3=0, 
	\end{equation*}
where $\varrho'=\varrho\lfloor_{\mathbb{R}^2}$. That is, 
	\begin{equation}\label{eq: bc_rhopartial_i}
		(\mathbb{C}\widehat{\nabla}(\varrho'\partial_i\bm{u}) -  {\bf{G}}_i) \bm{e}_3= \bm{0},\qquad \textrm{on}\, 
		\{x_3=0\}.
	\end{equation}
Hence, by combining \eqref{eq: equation_solved_rhopartial_i} and \eqref{eq: bc_rhopartial_i}, we obtain the problem
\begin{equation}\label{eq: bvp_rho_partial_u_new}
	\begin{cases}
		\textup{\textrm{div}}(\mathbb{C}\widehat{\nabla}(\varrho\partial_i\bm{u}) - {\bf{G}}_i)=\bm{F}_i,  & \textrm{in}\, \mathbb{R}^3_-,\\
		(\mathbb{C}\widehat{\nabla}(\varrho'\partial_i\bm{u})- {\bf{G}}_i) \bm{e}_3 = \bm{0}, & \textrm{on}\, 
		 \{x_3=0\}.
	\end{cases}
\end{equation}
We write the weak formulation of this problem.  The quadratic form associated to the left-hand side of the equation above is: 
\begin{equation} \label{eq: Gintegral} 
    \int_{\RR^3_-}    (\mathbb{C}\widehat{\nabla}(\varrho\partial_i\bm{u})-{\bf{G}}_i) \cdot \nabla \bm{\varphi}\, d \bx,
\end{equation}
for $\bm{\varphi}\in H^1_0(\RR^3_-)$. This expression is justified by the fact that $\bu\in H^1_0(\RR^3_-)$ and ${\bf{G}}_i\in L^2(\RR^3_-)$ by the regularity and decay conditions on $\CC$ (in particular, the fact that $\varrho \nabla \CC\in L^\infty(\RR^3_-)$ from \eqref{eq: decay_cond_lame}).

We further observe that $\bm{F}_i\in (H^1_0)'(\mathbb{R}^3_-)$. In fact, taking again $\bm{\varphi}\in {H}^1_{0}
(\mathbb{R}^3_-)$, by the hypothesis $\bm{f}\in H^0_1(\mathbb{R}^3_-)$, Theorem \ref{lem: weak_sol} and \eqref{eq: finH0_1}, we have
	\begin{equation}\label{eq: F5}
		\begin{aligned}
			\Big|\langle \varrho\partial_i \bm{f},\bm{\varphi}\rangle_{((H^{1}_0)'(\mathbb{R}^3_-),{H}^1_{0}
			(\mathbb{R}^3_-))}\Big|&=\Big|\langle \partial_i(\varrho \bm{f})-\bm{f}\partial_i\varrho ,
			\bm{\varphi}\rangle_{((H^{1}_0)'(\mathbb{R}^3_-),{H}^1_{0}(\mathbb{R}^3_-))}\Big|\\
			&\leq C\left( \|\bm{f}\|_{H^0_1(\mathbb{R}^3_-)}\|\nabla\bm{\varphi}\|_{L^2(\mathbb{R}^3_-)}+\|
			\bm{f}\|_{H^0_1(\mathbb{R}^3_-)}\bigg\|\frac{\bm{\varphi}}{\varrho}
			\bigg\|_{L^2(\mathbb{R}^3_-)}   
			\right)\\
			&\leq C \ \|\bm{f}\|_{H^0_1(\mathbb{R}^3_-)} \|\bm{\varphi}\|_{{H^1_{0}}(\mathbb{R}^3_-)}.
		\end{aligned}
	\end{equation}
 We point out that, in the first inequality above,  there are no boundary terms, because we take tangential  derivatives.  
Next, from \eqref{eq: decay_cond_lame}, Theorem \ref{lem: weak_sol} and \eqref{eq: finH0_1} it follows that
	\begin{equation}\label{eq: F2}
		\begin{aligned}
		\Big|\langle (\mathbb{C}\widehat{\nabla}\partial_i\bm{u})\nabla\varrho,
		\bm{\varphi}\rangle_{((H^{1}_0)'(\mathbb{R}^3_-),{H}^1_{0}(\mathbb{R}^3_-))}\Big|&=\Big|\langle 
		\widehat{\nabla}\bm{u},\partial_i(\mathbb{C}(\nabla\varrho\otimes\bm{\varphi}))\rangle\Big|\\
		&\qquad \qquad \leq C\ \|\widehat{\nabla}\bm{u}\|_{L^2(\mathbb{R}^3_-)}\|\bm{\varphi}\|_{{H^1_{0}}
		(\mathbb{R}^3_-)}\\
		&{\qquad\qquad \leq C\ \|\bm{f}\|_{H^0_1(\mathbb{R}^3_-)} \|\bm{\varphi}\|
		_{{H^1_{0}}(\mathbb{R}^3_-)}}.
		\end{aligned}
	\end{equation}
where the {first} inequality comes from the fact that $\varrho \,\partial_i\nabla\varrho$ are bounded in 
$\mathbb{R}^3_-$, for $i=1,2$. Again we stress that there are not boundary elements because we take tangential 
derivatives. 
Finally, using again the fact that $\nabla\varrho$ and $\varrho \,\partial_i\mathbb{C}$ are bounded, and Theorem \ref{lem: weak_sol} and \eqref{eq: finH0_1}, we get
 	\begin{equation}\label{eq: F4}
 		\begin{aligned}
	 		\Big|\langle (\nabla\varrho^T\partial_i\mathbb{C}\widehat{\nabla}\bm{u}) , \bm{\varphi}
	 		\rangle_{((H^{1}_0)'(\mathbb{R}^3_-),{H}^1_{0}(\mathbb{R}^3_-))}  \Big|
 			&\leq C \|\widehat{\nabla}\bm{u}\|_{L^2(\mathbb{R}^3_-)}\bigg\|\frac{\bm{\varphi}}{\varrho}\bigg\|
 			_{L^2(\mathbb{R}^3_-)} \\
 			&\qquad \qquad \leq C\ \|\widehat{\nabla}\bm{u}\|_{L^2(\mathbb{R}^3_-)} \|\bm{\varphi}\|
 			_{{H^1_{0}}(\mathbb{R}^3_-)}\\
 			&{\qquad\qquad \leq C\ \|\bm{f}\|_{H^0_1(\mathbb{R}^3_-)} \|\bm{\varphi}\|
 			_{{H^1_{0}}(\mathbb{R}^3_-)}}.
 		\end{aligned}
 	\end{equation}
By \eqref{eq: Gintegral} then, we can write Problem \eqref{eq: bvp_rho_partial_u_new} in weak form as:
\begin{equation} \label{eq: bilinear_form_a(w,phi)=h(phi)}
       \int_{\RR^3_-}
      (\mathbb{C}\widehat{\nabla}(\varrho\partial_i\bm{u}))\cdot \nabla \bm{\varphi}\, d\bx=  \int_{\RR^3_-} {\bf{G}}_i\cdot \nabla \bm{\varphi}\, d\bx
     - \langle \bm{F}_i, 
     \bm{\varphi}\rangle_{((H^1_0)'(\RR^3_-),H^1_0(\RR^3_-))}, 
\end{equation}
for  $\bm{\varphi}\in H^1_0(\RR^3_-)$.     
Since ${\bf{G}}_i\in L^2(\RR^3_-)$, it is clear that the right-hand side of the equality above defines a continuous functional, which we call $h_i$, $i=1,2$, on $H^1_0(\RR^3_-)$. In fact, we have already explicitly estimated the terms containing ${\bm{F}}_i$. For the term containing ${\bf{G}}_i$, we note that, using \eqref{ass: c1,1 regularity}, Theorem \ref{lem: weak_sol} and \eqref{eq: finH0_1},  we obtain
	\begin{equation}\label{eq: F6}
	 \begin{aligned}
		\Bigg| \int_{\mathbb{R}^3_-}\mathbb{C}\widehat{(\nabla\varrho \otimes \partial_i\bm{u} )}: 
		\widehat{\nabla}\bm{\varphi}\, d\bm{x} \Bigg| &\leq C \| \partial_i \bm{u}\|_{L^2(\mathbb{R}^3_-)} \|
		\nabla \bm{\varphi}\|_{L^2(\mathbb{R}^3_-)} \\ 
		&\qquad \qquad \leq C \| \partial_i \bm{u}\|_{L^2(\mathbb{R}^3_-)} \|
		\bm{\varphi}\|_{H^1_0(\mathbb{R}^3_-)},\\
		&{\qquad\qquad \leq C\ \|\bm{f}\|_{H^0_1(\mathbb{R}^3_-)} \|\bm{\varphi}\|
			_{{H^1_{0}}(\mathbb{R}^3_-)}}
         \end{aligned}
	\end{equation}
 and by \eqref{eq: decay_cond_lame},
	\begin{equation}\label{eq: F7}
		\begin{aligned}
		\Bigg| \int_{\mathbb{R}^3_-}\varrho \partial_i\mathbb{C}\widehat{\nabla}\bm{u}: \nabla \bm{\varphi}\,  
		d\bm{x} \Bigg| \leq C \| \nabla \bm{u}\|_{L^2(\mathbb{R}^3_-)} 
		\|\nabla \bm{\varphi}\|_{L^2(\mathbb{R}^3_-)} &\leq C \| \nabla \bm{u}\|_{L^2(\mathbb{R}^3_-)} \|
		\bm{\varphi}\|_{H^1_0(\mathbb{R}^3_-)}\\
		&{\leq C\ \|\bm{f}\|_{H^0_1(\mathbb{R}^3_-)} \|\bm{\varphi}\|
			_{{H^1_{0}}(\mathbb{R}^3_-)}}
		\end{aligned}
	\end{equation}	
We therefore study the variational problem: \\
{\em find $\bm{w}_i\in H^1_0(\mathbb{R}^3_-)$, for $i=1,2,$ such that 
	\begin{equation}
			a(\bm{w}_i,\bm{\varphi})=h_i(\bm{\varphi}), \qquad \forall \bm{\varphi}\in H^1_0(\mathbb{R}^3_-),
	\end{equation}	
where  $a$ is the bilinear form defined in \eqref{eq: bil_form_a}, and $h_i$, $i=1,2$, is the linear operator on $H^1_0(\RR^3_-)$ defined by the right-hand side in \eqref{eq: bilinear_form_a(w,phi)=h(phi)} as a function of $\bm{\varphi}$.}\\
If this problem has a unique solution, then necessarily $\bm{w}_i= \varrho \partial_i \bm{u}$, $i=1,2$.

As proved in Theorem \ref{lem: weak_sol}, the bilinear form $a$ is continuous and coercive, and we have shown above $h_i$ is continuous. Hence, the  Lax-Milgram Theorem then ensures the existence of a unique solution  $\bm{w}_i= \varrho \partial_i \bm{u}$ of 
\eqref{eq: bvp_rho_partial_u_new} in $H^1_0(\mathbb{R}^3_-)$, satisfying
	\begin{equation*}
		\|\varrho\partial_i\bm{u}\|_{H^1_0(\mathbb{R}^3_-)}\leq C \|\bm{f}\|_{H^{0}_1(\mathbb{R}^3_-)},
	\end{equation*}
{where the last inequality comes from \eqref{eq: F5}, \eqref{eq: F2}, \eqref{eq: F4}, \eqref{eq: F6} and \eqref{eq: F7}}.	
	
\subsubsection*{Second Step:}\ 
From the previous step, by the isomorphism \eqref{map: isomorph} we have that, for $i=1,2$,
$\partial_i\bm{u}\in H^1_1(\mathbb{R}^3_-)$ and satisfies  the estimate:
	\begin{equation}\label{eq: estimate for H^1_1 with f}
		\|\partial_i \bm{u}\|_{H^1_1(\mathbb{R}^3_-)}\leq C\, \|\varrho\partial_i\bm{u}\|
		_{H^1_0(\mathbb{R}^3_-)}\leq C\|\bm{f}\|_{H^0_1(\mathbb{R}^3_-)}.
	\end{equation}
If  $\partial_3\bm{u}\in H^1_1(\mathbb{R}^3_-)$, then
$\nabla\bm{u}\in H^1_1(\mathbb{R}^3_-)$, that is, $\bm{u}\in H^2_1(\mathbb{R}^3_-)$, given that $\bu\in 
H^1_0(\RR^3_-)$ by Theorem \ref{lem: weak_sol}. In fact,  it is enough to 
prove  that $\varrho\partial^2_3\bm{u}\in L^2(\mathbb{R}^3_-)$. To this end,  we use equation 
\eqref{eq: reg_prob},  written in the form:
	\begin{equation}\label{eq: explicit_equ_pb}
		\mu \Delta \bm{u} + (\lambda+\mu) \nabla\textrm{div}\,\bm{u}+\nabla\lambda\, \textrm{div}\,
		\bm{u}+2\widehat{\nabla}\bm{u}\nabla\mu =\bm{f}.
	\end{equation}
From this equation, it follows, in particular, that:
	\begin{equation}\label{eq: partial^2_3}
		\begin{aligned}
			\mu\partial^2_3\bm{u}'&=-\mu \Delta' \bm{u}' - (\lambda+\mu)\nabla'\textrm{div}\,
			\bm{u}-\nabla'\lambda\,\textrm{div}\,\bm{u}-2(\widehat{\nabla}\bm{u})'\nabla\mu + \bm{f}',\\
			(\lambda+2\mu)\partial^2_3u_3&=-\mu\Delta' u_3 - (\lambda+\mu)\partial_3\, \textrm{div}\,
			\bm{u}'-\partial_3\lambda\, \textrm{div}\bm{u}-2(\widehat{\nabla}\bm{u})_3\cdot \nabla\mu + f_3,
		\end{aligned}
	\end{equation}
where we used the prime notation to denote projection onto the first two variables.
From \eqref{eq: partial^2_3}, since the tangential derivatives are in $H^1_1(\RR^3_-)$, we have that
	\begin{equation}\label{eq: part_3 L2}
		\varrho\mu\partial^2_3\bm{u}'\in L^2(\mathbb{R}^3_-),\qquad \textrm{and}\,\qquad 
		\varrho(\lambda+2\mu)\partial^2_3{u}_3\in L^2(\mathbb{R}^3_-).
	\end{equation}	
From \eqref{eq: part_3 L2}, the bounds on the Lam\'e parameters (see \eqref{ass: c1,1 regularity}), and the strong 
convexity condition \eqref{ass: strong conv}, we obtain   that
	\begin{equation}\label{eq: varrho partial^2_3}
		\varrho\partial^2_3\bm{u}\in L^2(\mathbb{R}^3_-).
	\end{equation}
The regularity estimate \eqref{eq: stab_est_H^2_1} then follows from \eqref{eq: estimate for H^1_1 with f}, the 
assumed regularity of the source and the coefficients, and \eqref{eq: partial^2_3}.
\end{proof}

We now tackle \ref{i.3}, that is, the existence of a very weak solution for Problem  \eqref{eq: reg_prob}. 
More precisely, we will prove the existence of a unique solution $\bm{u}\in E_0(\mathbb{R}^3_-)$  for 
$\bm{f}\in  V'$, where, we recall, the space $V$ is defined in \eqref{eq: space V} and $E_0(\RR^3_-)$ is given in 
\eqref{eq: space E^2_0}.  To do so,  we first establish the validity 
of integration by parts, when $\textrm{div}(\mathbb{C}\widehat{\nabla}\bm{u})\in V'$. 
This result will be a direct consequence of standard integration by parts provided  $\mathcal{D}
(\overline{\mathbb{R}^3_-})$ is dense  into $E_0(\mathbb{R}^3_-)$. The density follows from  an extension to the 
case of weighted Sobolev spaces of results in \cite{Lions-Magenes} and from some  ideas contained in 
\cite{Amrouche-Dambrine-Raudin}. We include the proof of this density result for completeness.

\begin{lemma}\label{lem: density}
The space $\mathcal{D}(\overline{\mathbb{R}^3_-})$ is dense in ${E}_0(\mathbb{R}^3_-)$.
\end{lemma}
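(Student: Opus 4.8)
The plan is to run the classical Lions--Magenes duality argument, adapted to the weighted spaces, exactly as suggested by the references in the text. First I would record that the map $\bm u\mapsto(\bm u,\operatorname{div}(\mathbb C\widehat{\nabla}\bm u))$ embeds $E_0(\mathbb R^3_-)$ isometrically onto a \emph{closed} subspace of the Hilbert space $X:=H^0_{-1}(\mathbb R^3_-)\times V'(\mathbb R^3_-)$ (closedness is immediate: convergence in $X$ forces distributional convergence of $\operatorname{div}(\mathbb C\widehat{\nabla}\bm u_n)$, which identifies the limit). I would also check the trivial inclusion $\mathcal D(\overline{\mathbb R^3_-})\subset E_0(\mathbb R^3_-)$: such $\bm\phi$ has compact support, so $\bm\phi\in H^0_{-1}$ and, since $\mathbb C$ is Lipschitz, $\operatorname{div}(\mathbb C\widehat{\nabla}\bm\phi)$ is a compactly supported $L^\infty$ function, hence an element of $V'$. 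By Hahn--Banach it then suffices to prove that every $L\in E_0'(\mathbb R^3_-)$ vanishing on $\mathcal D(\overline{\mathbb R^3_-})$ is zero.

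Extend such an $L$ to $X'=(H^0_{-1})'\times V''=H^0_{1}(\mathbb R^3_-)\times V(\mathbb R^3_-)$, using that $V$, being a closed subspace of the Hilbert space $H^2_1(\mathbb R^3_-)$, is reflexive; write the extension as a pair $(\bm g,\bm v)$ with $\bm g\in H^0_1$, $\bm v\in V$, so $L(\bm u)=\langle \bm g,\bm u\rangle_{(H^0_1,H^0_{-1})}+\langle\operatorname{div}(\mathbb C\widehat{\nabla}\bm u),\bm v\rangle_{(V',V)}$. Testing $L=0$ first against $\bm\phi\in\mathcal D(\mathbb R^3_-)$ and integrating by parts twice — legitimate since $\bm\phi$ is smooth with compact support and $\bm v\in H^2_{\mathrm{loc}}$ because $\mathbb C$ is Lipschitz — yields $\operatorname{div}(\mathbb C\widehat{\nabla}\bm v)=-\bm g$ in $\mathbb R^3_-$. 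Testing next against a general $\bm\phi\in\mathcal D(\overline{\mathbb R^3_-})$ and retaining the boundary contribution on $\{x_3=0\}$, where the conormal derivative of $\bm v$ vanishes because $\bm v\in V$, leaves $\int_{\{x_3=0\}}(\mathbb C\widehat{\nabla}\bm\phi)\bm e_3\cdot\bm v\,d\sigma=0$ for all such $\bm\phi$.

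The crucial step is to extract from this single surface identity the vanishing of \emph{both} trace components of $\bm v$, using only the strong convexity \eqref{ass: strong conv}. Choosing $\bm\phi$ with $\bm\phi|_{\{x_3=0\}}=0$ and $\partial_3\bm\phi|_{\{x_3=0\}}$ an arbitrary smooth compactly supported field, the normal stress $(\mathbb C\widehat{\nabla}\bm\phi)\bm e_3|_{\{x_3=0\}}$ is an invertible (diagonal, with entries $\mu,\mu,\lambda+2\mu$, all bounded below by strong convexity) multiple of $\partial_3\bm\phi|_{\{x_3=0\}}$; hence these traces are dense in the relevant weighted $L^2(\mathbb R^2)$ space, forcing $\bm v|_{\{x_3=0\}}=0$. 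Combining $\bm v|_{\{x_3=0\}}=0$ (which kills the tangential derivatives of $\bm v$ on the boundary) with $(\mathbb C\widehat{\nabla}\bm v)\bm e_3|_{\{x_3=0\}}=0$ and invoking strong convexity once more gives $\partial_3\bm v|_{\{x_3=0\}}=0$ as well. Thus $\bm\gamma\bm v=0$, so by Proposition \ref{prop: trace theor} $\bm v\in\mathring H^2_1(\mathbb R^3_-)$, and I may pick $\bm\phi_n\in\mathcal D(\mathbb R^3_-)$ with $\bm\phi_n\to\bm v$ in $H^2_1$.

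Finally, for arbitrary $\bm u\in E_0(\mathbb R^3_-)$, since $\bm\phi_n\in\mathcal D(\mathbb R^3_-)\subset V$ and $\operatorname{div}(\mathbb C\widehat{\nabla}\bm u)\in V'\subset\mathcal D'(\mathbb R^3_-)$, the pairing $\langle\operatorname{div}(\mathbb C\widehat{\nabla}\bm u),\bm\phi_n\rangle_{(V',V)}$ reduces to the distributional one, i.e. $\int\bm u\cdot\operatorname{div}(\mathbb C\widehat{\nabla}\bm\phi_n)\,d\bm x$. Passing to the limit — using that $\operatorname{div}(\mathbb C\widehat{\nabla}\bm\phi_n)\to\operatorname{div}(\mathbb C\widehat{\nabla}\bm v)=-\bm g$ in $H^0_1$, where the decay hypothesis $|\nabla\mathbb C|\le C/\varrho$ from \eqref{eq: decay_cond_lame} controls the first-order terms, and $\bm u\in H^0_{-1}$ — gives $\langle\operatorname{div}(\mathbb C\widehat{\nabla}\bm u),\bm v\rangle_{(V',V)}=-\int\bm u\cdot\bm g\,d\bm x$, whence $L(\bm u)=0$. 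So $L=0$ and the density follows. I expect the genuine obstacles to be two: the functional-analytic setup (closedness of the image, reflexivity of $V$, and the resulting representation of $E_0'$), and, above all, the boundary step, where one must convert the scalar surface identity into vanishing of both traces of $\bm v$ using strong convexity alone, with no extra regularity to spare; everything else is bookkeeping with the weighted norms and integration by parts.
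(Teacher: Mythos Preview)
Your proof is correct and follows the same Hahn--Banach/Lions--Magenes duality skeleton as the paper: represent a functional $L\in E_0'$ by a pair $(\bm g,\bm v)\in H^0_1\times V$, show that if $L$ annihilates $\mathcal D(\overline{\mathbb R^3_-})$ then $\bm v\in\mathring H^2_1(\mathbb R^3_-)$, approximate $\bm v$ by $\bm\phi_n\in\mathcal D(\mathbb R^3_-)$, and pass to the limit to get $L=0$. The final limiting step, including the continuity of $\operatorname{div}(\mathbb C\widehat\nabla\,\cdot):H^2_1\to H^0_1$ via \eqref{eq: decay_cond_lame}, is identical in both arguments.

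The genuine difference is in how you reach $\bm v\in\mathring H^2_1$. The paper extends $\bm v$ and $\bm g$ by zero (and $\mathbb C$ by even reflection) to all of $\mathbb R^3$, observes that the vanishing of $L$ on $\mathcal D(\overline{\mathbb R^3_-})$ forces $\operatorname{div}(\widetilde{\mathbb C}\widehat\nabla\widetilde{\bm v})=-\widetilde{\bm g}$ in $\mathcal D'(\mathbb R^3)$, and then invokes the \emph{full-space} analogues of Theorems~\ref{lem: weak_sol} and~\ref{th: sol in H^2_1} to conclude $\widetilde{\bm v}\in H^2_1(\mathbb R^3)$; since $\widetilde{\bm v}$ is an extension by zero, both traces of $\bm v$ vanish. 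You instead stay in the half-space and read the traces off directly: the surface identity $\int_{\{x_3=0\}}(\mathbb C\widehat\nabla\bm\phi)\bm e_3\cdot\bm v\,d\sigma=0$, tested against $\bm\phi$ with $\bm\phi|_{x_3=0}=0$ and arbitrary $\partial_3\bm\phi|_{x_3=0}$, yields $\bm v|_{x_3=0}=0$ because the conormal map reduces to $\operatorname{diag}(\mu,\mu,\lambda+2\mu)\partial_3\bm\phi$ there, and the same diagonal computation applied to $(\mathbb C\widehat\nabla\bm v)\bm e_3=0$ then forces $\partial_3\bm v|_{x_3=0}=0$. Your route is more elementary---it avoids re-establishing the weak/strong theory on $\mathbb R^3$---but it leans explicitly on the isotropic form of $\mathbb C$; the paper's extension-plus-regularity argument is more structural and would carry over unchanged to general strongly elliptic systems.
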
  

\begin{proof}
By the Riesz Representation Theorem, for every functional $T\in({E}_0(\mathbb{R}^3_-))'$  
there exist $\bm{u}_1\in  H^0_1(\mathbb{R}^3_-)$ and $\bm{u}_2\in V(\mathbb{R}^3_-)$ such that, for every
$\bm{v}\in E_0(\mathbb{R}^3_-)$,
	\begin{equation}\label{eq: formula_density E1}
		\langle T, \bm{v} \rangle=\int_{\mathbb{R}^3_-}\bm{u}_1\cdot \bm{v}\, d\bm{x} + \langle \textrm{div}\,
		\mathbb{C}\widehat{\nabla}\bm{v},\bm{u}_2\rangle_{(V'(\RR^3_-),V(\RR^3_-))}. 
	\end{equation}
Next, we suppose that $T$ is the zero functional when restricted to $\mathcal{D}
(\overline{\mathbb{R}^3_-})$:
	\begin{equation}
		\langle T, \bm{\varphi} \rangle =0,\qquad \forall \bm{\varphi} \in \mathcal{D}(\overline{\mathbb{R}^3_-}).
	\end{equation}
We will show that  $T$ is then the zero functional on $E_0$:
	\begin{equation*}
		\langle T, \bm{v} \rangle =0,\qquad \forall \bm{v} \in E_0({\mathbb{R}}^3_-).
	\end{equation*}
Since $\bm{\varphi}\in \mathcal{D}(\overline{\mathbb{R}^3_-})$, there exists $\bm{\psi}\in \mathcal{D}
(\mathbb{R}^3)$ such that
	\begin{equation*}
		\bm{\psi}_|{_{\overline{\mathbb{R}^3_-}}}=\bm{\varphi}.
	\end{equation*}
Following the approach of Lions-Magenes (see \cite{Lions-Magenes} p. 173), we let $\widetilde{\bm{u}}_1$ and 
$\widetilde{\bm{u}}_2$ denote  the extension  by zero of $\bm{u}_1$ and $\bm{u}_2$ to $\mathbb{R}^3$, 
respectively. We also extend the isotropic elastic tensor $\mathbb{C}$ to an isotropic tensor 
$\widetilde{\mathbb{C}}$ on $\RR^3$,  satisfying
	\begin{equation*}
		\widetilde{\mathbb{C}}\in C^{0,1}(\mathbb{R}^3)\, \qquad \textrm{and}\, \qquad 
		\widetilde{\mathbb{C}}_{|_{\overline{\mathbb{R}^3_-}}}=\mathbb{C}.
	\end{equation*}
Such as extension can be done by even reflection, and, hence, we  can assume 
that  there exist $\widetilde{\alpha}_0>0$ and $\widetilde{\beta}_0>0$ such that
	\begin{equation*}
	\widetilde{\mu}(\bm{x})\geq \widetilde{\alpha}_0>0,\qquad \textrm{and}\qquad 3\widetilde{\lambda}
	(\bm{x})+2\widetilde{\mu}(\bm{x})\geq \widetilde{\beta}_0>0,\qquad \forall\,\bm{x}\in\mathbb{R}^3,
	\end{equation*}
and we can also assume that
	\begin{equation}
		|\nabla \widetilde{\lambda}|\leq \frac{C}{\varrho},\qquad  |\nabla \widetilde{\mu}|\leq \frac{C}
		{\varrho},
	\end{equation} 
almost everywhere in $\mathbb{R}^3$.

Therefore it follows that
	\begin{equation*}
		\begin{aligned}
			\langle T, \bm{\psi}\rangle &=\int_{\mathbb{R}^3} \left[\widetilde{\bm{u}_1}\cdot \bm{\psi} + 
			\textrm{div}(\widetilde{\mathbb{C}}\widehat{\nabla}\bm{\psi}) \cdot \widetilde{\bm{u}}_2 \right]\, 
			d\, \bx \\
			&=\int_{\mathbb{R}^3_-} \left[{\bm{u}_1}\cdot \bm{\varphi}+ \textrm{div}
			(\mathbb{C}\widehat{\nabla}\bm{\varphi})\cdot {\bm{u}}_2\right]\, d\, \bx=0,
		\end{aligned}
	\end{equation*} 
where we used that $\textrm{div}(\widetilde{\mathbb{C}}\widehat{\nabla}\bm{\psi})$ is a bounded function with 
compact support and that $\widetilde{\bm{u}}_2$ is a locally $L^2$ function.
Consequently, for any $\bm{\psi}\in\mathcal{D}(\mathbb{R}^3)$,
	\begin{equation*}
	\langle \widetilde{\bm{u}}_1,\bm{\psi}\rangle + \langle \textrm{div}\,
	\widetilde{\mathbb{C}}\widehat{\nabla}\widetilde{\bm{u}}_2,\bm{\psi}\rangle =0,
	\end{equation*}
from which it follows that
	\begin{equation}\label{eq: u_2 tilde in density proof}
		\textrm{div}(\widetilde{\mathbb{C}}\widehat{\nabla}\widetilde{\bm{u}}_2)=-\widetilde{\bm{u}}_1 
		\quad \textrm{in}\,\,\mathcal{D}'(\mathbb{R}^3).
	\end{equation}	
We next show that $\widetilde{\bm{u}}_2\in H^2_1(\mathbb{R}^3)$ by using the well posedness and  regularity for 
the equation 
\[
                       \textrm{div}(\widetilde{\mathbb{C}}\widehat{\nabla}\widetilde{\bm{u}} )= \tilde{\bm{f}}
\]
on all of $\RR^3$.  The decay condition at infinity  imposed on $\widetilde{\bu}$ as an element of $H^2_1(\mathbb{R}^3)$  (or  just $H^1_0(\mathbb{R}^3)$) ensures the global coercivity of the quadratic form from the strong convexity of the  Lam\'e tensor.  
Therefore, since $\tilde{\bm{f}}=-\widetilde{\bm{u}}_1\in H^0_1(\mathbb{R}^3)$ we can first prove that 
the solution belongs to $H^1_0(\mathbb{R}^3)$ proceeding as in the proof of Theorem \ref{lem: weak_sol}. Then 
following a similar approach as that in the proof of Theorem \ref{th: sol in H^2_1}, we are able to establish that the unique solution, which must agree with $\widetilde{\bm{u}}_2$,   is in $H^2_1(\mathbb{R}^3)$.
Now, since $\widetilde{\bm{u}}_2\in H^2_1(\mathbb{R}^3)$ is an extension by zero of $\bm{u}_2$ in 
$\mathbb{R}^3$, $\bu_2$ and $\nabla \bu_2$ must have trace zero on $\{x_3=0\}$, that is,  
	\begin{equation*}
		\bm{u}_2=\widetilde{\bm{u}}_2{_{|_{\mathbb{R}^3_-}}}\in \mathring{H}^2_{1}(\mathbb{R}^3_-).
	\end{equation*} 	
Exploiting  identity  \eqref{eq: u_2 tilde in density proof}, we can rewrite  \eqref{eq: formula_density E1} as:
\[
    \langle T, \bm{v}\rangle =  - \int_{\mathbb{R}^3_-}\textrm{div}\,(\mathbb{C}\widehat{\nabla}\bm{u}_2)\cdot \bm{v}\, d\bm{x} + \langle \textrm{div}\,(\mathbb{C}\widehat{\nabla}\bm{v}),\bm{u}_2\rangle_{(V'(\RR^3_-),V(\RR^3_-))}.
\]
Since $\mathcal{D}(\mathbb{R}^3_-)$ is dense in $\mathring{H}^2_{1}(\mathbb{R}^3_-)$ by definition, there exists
	\begin{equation}\label{eq: formula_density_E3 }
		{\bm{\varphi}_k}\in\mathcal{D}(\mathbb{R}^3_-)\, \quad \textrm{such that}\,\quad 
		\bm{\varphi}_k\overset{{\tiny{k\to\infty}}}{\longrightarrow}\ \bm{u}_2\,\quad \textrm{in}\,\, 
		\mathring{H}^2_1(\mathbb{R}^3_-).
	\end{equation}	
Hence we find that, for every $\bm{v}\in E_0(\mathbb{R}^3_-)$,
	\begin{equation*}
			\langle T, \bm{v}\rangle 
			=\lim\limits_{k\to \infty}\left[-\int_{\mathbb{R}^3_-}\textrm{div}\,
			(\mathbb{C}\widehat{\nabla}\bm{\varphi}_k)\cdot \bm{v}\, d\bm{x} + \langle \textrm{div}\,
			(\mathbb{C}\widehat{\nabla}\bm{v}),\bm{\varphi}_k\rangle\right] =0.
	\end{equation*}
We conclude that  $\mathcal{D}(\overline{\mathbb{R}^3_-})$ is dense in $E_0(\mathbb{R}^3_-)$ from the 
Hahn-Banach Theorem. In fact, setting for notational convenience $E_1:={\mathcal{D}
(\overline{\mathbb{R}^3_-})}$, we suppose by contradiction that $\overline{E}_1$ is a proper closed subset of  
$E_0(\mathbb{R}^3_-)$. Therefore,  there exists $\bm{f}_0\in E_0(\mathbb{R}^3_-)$ such that $\bm{f}_0 \notin 
\overline{E}_1$, and  we can define a continuous functional $\widehat{T}$ on $\overline{E}_1\cup \{\bm{f}_0\})$ by:
	\begin{equation*}
		\begin{aligned}
		\widehat{T}(\bm{f})&=0\,\qquad \forall \bm{f}\in\overline{E}_1,\\
		\widehat{T}(\bm{f}_0)&=1.
		\end{aligned}
	\end{equation*}	
Then by the Hahn-Banach theorem the operator 
$\widehat{T}$ can be extended to a non-zero functional  $\widehat{T}\in (E_0(\mathbb{R}^3_-))'$, a contradiction, since we proved that any functional that is zero on $E_1$ is zero on $E_0$.
\end{proof}

By the previous lemma and  results in \cite{Amrouche-Dambrine-Raudin}, we can prove the following Green's 
formula.

\begin{proposition}
Let $\bm{u}\in E_0(\mathbb{R}^3_-)$. Then for any $\bm{\phi}\in V(\mathbb{R}^3_-)$, we have
	\begin{equation}\label{eq: green's formulas}
		\begin{aligned}
			\langle \textup{\textrm{div}}(\mathbb{C}\widehat{\nabla}\bm{u}),\bm{\phi} 
			\rangle_{(V'(\mathbb{R}^3_-),V(\mathbb{R}^3_-))}= \int_{\mathbb{R}^3_-}\bm{u}\cdot 
			\textup{\textrm{div}}(\mathbb{C}\widehat{\nabla}\bm{\phi})\, d\bm{x}+ \langle 
			(\mathbb{C}\widehat{\nabla}\bm{u})\bm{e}_3,\bm{\phi}\rangle_{(H^{-3/2}_{-1}(\mathbb{R}^2),\, 
			H^{3/2}_{1}(\mathbb{R}^2))}.
		\end{aligned}
	\end{equation}
\end{proposition}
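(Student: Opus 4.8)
The plan is to establish the Green's formula first for smooth data and then pass to the limit using the density result just proved. More precisely, I would first take $\bm{u}\in\mathcal{D}(\overline{\mathbb{R}^3_-})$ and $\bm{\phi}\in\mathcal{D}(\overline{\mathbb{R}^3_-})$. In this case every term in \eqref{eq: green's formulas} is a classical integral, and the identity is just integration by parts applied twice (the classical second Green's identity for the Lam\'e operator $\textup{div}(\mathbb{C}\widehat{\nabla}\,\cdot\,)$): integrating $\textup{div}(\mathbb{C}\widehat{\nabla}\bm{u})\cdot\bm{\phi}$ over $\mathbb{R}^3_-$ produces $-\int_{\mathbb{R}^3_-}\mathbb{C}\widehat{\nabla}\bm{u}:\widehat{\nabla}\bm{\phi}\,d\bm{x}+\langle(\mathbb{C}\widehat{\nabla}\bm{u})\bm{e}_3,\bm{\phi}\rangle$ on the boundary $\{x_3=0\}$ (here I use that $\mathbb{C}$ is symmetric so that $(\mathbb{C}\widehat{\nabla}\bm{u}):\widehat{\nabla}\bm{\phi}=(\mathbb{C}\widehat{\nabla}\bm{u}):\nabla\bm{\phi}$), and integrating by parts a second time moves the remaining derivative onto $\bm{\phi}$, giving $\int_{\mathbb{R}^3_-}\bm{u}\cdot\textup{div}(\mathbb{C}\widehat{\nabla}\bm{\phi})\,d\bm{x}$ plus another boundary term; the symmetry of $\mathbb{C}$ again makes the two boundary contributions combine into the single term in \eqref{eq: green's formulas}. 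One must check that the integrations by parts produce no contributions at infinity, which is immediate for compactly supported $\bm{u}$, and that the boundary terms on $\{x_3=0\}$ are precisely the asserted duality pairing.

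Second, I would fix $\bm{\phi}\in V(\mathbb{R}^3_-)$ and regard both sides of \eqref{eq: green's formulas} as functionals of $\bm{u}\in E_0(\mathbb{R}^3_-)$. The left-hand side is continuous on $E_0(\mathbb{R}^3_-)$ by definition of the $E_0$-norm and the fact that $\bm{\phi}\in V$. On the right-hand side, the volume term $\int_{\mathbb{R}^3_-}\bm{u}\cdot\textup{div}(\mathbb{C}\widehat{\nabla}\bm{\phi})\,d\bm{x}$ is controlled by $\|\bm{u}\|_{H^0_{-1}}$ because $\textup{div}(\mathbb{C}\widehat{\nabla}\bm{\phi})\in H^0_1(\mathbb{R}^3_-)$ for $\bm{\phi}\in H^2_1$ (using the Lipschitz and decay bounds \eqref{ass: c1,1 regularity}, \eqref{eq: decay_cond_lame} on the coefficients, expanding $\textup{div}(\mathbb{C}\widehat{\nabla}\bm{\phi})$ as in \eqref{eq: explicit_equ_pb} and weighting each term correctly against $\varrho$), and then pairing $\varrho^{-1}\bm{u}$ with $\varrho\,\textup{div}(\mathbb{C}\widehat{\nabla}\bm{\phi})$ in $L^2$. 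The boundary term is handled by the trace result Proposition \ref{prop: trace theor}: for $\bm{u}\in\mathcal{D}(\overline{\mathbb{R}^3_-})$ the normal traction $(\mathbb{C}\widehat{\nabla}\bm{u})\bm{e}_3$ on $\{x_3=0\}$ is smooth, but the content of the statement is that the map $\bm{u}\mapsto(\mathbb{C}\widehat{\nabla}\bm{u})\bm{e}_3$ extends continuously from $\mathcal{D}(\overline{\mathbb{R}^3_-})$ (with the $E_0$-norm) to $E_0(\mathbb{R}^3_-)$ with values in $H^{-3/2}_{-1}(\mathbb{R}^2)$, and the pairing against $\bm{\phi}\in V\subset H^2_1$, whose trace lies in $H^{3/2}_1(\mathbb{R}^2)$ by Proposition \ref{prop: trace theor}, is well defined. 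In fact this normal-traction map is \emph{defined} by the Green's formula itself: for $\bm{u}\in\mathcal{D}(\overline{\mathbb{R}^3_-})$ one has the bound $|\langle(\mathbb{C}\widehat{\nabla}\bm{u})\bm{e}_3,\bm{\phi}\rangle|\le C\|\bm{u}\|_{E_0(\mathbb{R}^3_-)}\|\bm{\phi}\|_{V(\mathbb{R}^3_-)}$ coming from the already-established boundedness of the other two terms, and surjectivity of the trace in Proposition \ref{prop: trace theor} lets one realize every $\bm{\phi}$-trace, so this bound genuinely estimates the $H^{-3/2}_{-1}(\mathbb{R}^2)$-norm of $(\mathbb{C}\widehat{\nabla}\bm{u})\bm{e}_3$.

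Third, with \eqref{eq: green's formulas} verified for $\bm{u}\in\mathcal{D}(\overline{\mathbb{R}^3_-})$ and both sides continuous in $\bm{u}$ with respect to the $E_0(\mathbb{R}^3_-)$-norm (for each fixed $\bm{\phi}\in V$), Lemma \ref{lem: density} gives the identity for all $\bm{u}\in E_0(\mathbb{R}^3_-)$ by approximation. I expect the main obstacle to be the bookkeeping in the second paragraph: correctly identifying the function spaces and weights so that each term is continuous in the right norm --- in particular checking that $\textup{div}(\mathbb{C}\widehat{\nabla}\bm{\phi})\in H^0_1(\mathbb{R}^3_-)$ (which forces one to use the decay hypothesis \eqref{eq: decay_cond_lame} on $\nabla\lambda,\nabla\mu$ to weight the first-order terms) and pinning down that the natural space for the normal traction of an $E_0$-element is exactly $H^{-3/2}_{-1}(\mathbb{R}^2)=\big(H^{3/2}_1(\mathbb{R}^2)\big)'$, which matches the trace space of $V$. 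The integration-by-parts computation and the final density passage are routine once this functional-analytic framework is in place.
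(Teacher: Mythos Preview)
Your proposal is correct and follows essentially the same route as the paper: classical second Green's identity for smooth $\bm{u}$, continuity of each term in the $E_0$-norm (using that $\textup{div}(\mathbb{C}\widehat{\nabla}\bm{\phi})\in H^0_1$ for $\bm{\phi}\in V$), and passage to the limit via the density Lemma~\ref{lem: density}. One small refinement worth making explicit: to conclude that $(\mathbb{C}\widehat{\nabla}\bm{u})\bm{e}_3$ actually lies in $H^{-3/2}_{-1}(\mathbb{R}^2)$ (and not merely in the dual of the trace space of $V$), you need that every $\bm{\zeta}\in H^{3/2}_1(\mathbb{R}^2)$ admits a lifting $\bm{\phi}\in V$ --- i.e., a lifting satisfying the homogeneous \emph{conormal} condition $(\mathbb{C}\widehat{\nabla}\bm{\phi})\bm{e}_3=0$, not just $\gamma_1\bm{\phi}=0$ --- with $\|\bm{\phi}\|_{H^2_1}\le C\|\bm{\zeta}\|_{H^{3/2}_1}$; the paper invokes \cite[Lemma~2.2]{Amrouche-Dambrine-Raudin} for this, though it can also be derived from Proposition~\ref{prop: trace theor} by solving algebraically for the required normal derivative in terms of tangential derivatives of $\bm{\zeta}$.
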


\begin{proof}
From Lemma \ref{lem: density}, it is sufficient to prove \eqref{eq: green's formulas} for $\bm{u}\in \mathcal{D}(\overline{\mathbb{R}^3_-})$.
Therefore, for any $\bm{u}\in\mathcal{D}(\overline{\mathbb{R}^3_-})$ and $\bm{\phi}\in V(\mathbb{R}^3_-)$ we have 
	\begin{equation}\label{eq: classic green's formula}
			\int_{\mathbb{R}^3_-}\textrm{div}(\mathbb{C}\widehat{\nabla}\bm{u})\cdot \bm{\phi}\, d\bm{x}=\int_{\mathbb{R}^3_-}\bm{u}\cdot \textrm{div}(\mathbb{C}\widehat{\nabla}\bm{\phi})\, d\bm{x}+\int_{\mathbb{R}^2}(\mathbb{C}\widehat{\nabla}\bm{u})\bm{e}_3)\cdot \bm{\phi}\,d\,\bm{x}'.
	\end{equation} 
Next we prove that, if $\bm{u}\in E_0(\mathbb{R}^3_-)$, then 
	\begin{equation}\label{eq: func gamma_N}
		\begin{aligned}
		\gamma_N: E_0(\mathbb{R}^3_-)&\longrightarrow H^{-3/2}_{-1}(\mathbb{R}^2),\\
			\bm{u}&\mapsto (\mathbb{C}\widehat{\nabla}\bm{u})\bm{e}_3,
		\end{aligned}
	\end{equation}
is a linear and continuous functional.
Let $\bm{\zeta}\in H^{3/2}_1(\RR^2)$. By the Trace Theorem,
there exists a lifting function $\bm{\phi}\in H^2_1(\mathbb{R}^3_-)$ such that $\bm{\phi}=\bm{\zeta}$ and 
$(\mathbb{C}\widehat{\nabla}\bm{\phi})\bm{e}_3=0$ on $\mathbb{R}^2$ (see  \cite[Lemma 2.2]{Amrouche-Dambrine-Raudin}); hence $\bm{\phi}\in V(\mathbb{R}^3_-)$. 
From \eqref{eq: classic green's formula}, we obtain
	\begin{equation*}
		\begin{aligned}
			\int_{\mathbb{R}^2}((\mathbb{C}\widehat{\nabla}\bm{u})\bm{e}_3)\cdot\bm{\zeta}\, 
			d\sigma(\bm{x})=\langle \textrm{div}(\mathbb{C}\widehat{\nabla}\bm{u}),
			\bm{\phi}\rangle_{(V'(\mathbb{R}^3_-),V(\mathbb{R}^3_-))}-\langle \bm{u}, \textrm{div}
			(\mathbb{C}\widehat{\nabla}\bm{\phi})\rangle_{(H^0_{-1}
			(\mathbb{R}^3_-),H^0_1(\mathbb{R}^3_-))}.
		\end{aligned}
	\end{equation*}	
The functional  on the left-hand side is, therefore,  well-defined on $H^{3/2}_1(\RR^2)$ as $\bu\in 
\cD(\overline{\RR^3_-})$.
Moreover, the lifting function  $\bm{\phi}$ satisfies
	\begin{equation*}
		\|\bm{\phi}\|_{H^2_1(\mathbb{R}^3_-)}\leq\, c\, \|\bm{\zeta}\|_{H^{3/2}_1(\mathbb{R}^2)},
	\end{equation*} 	
so that
	\begin{equation*}
		\begin{aligned}
			|\langle \gamma_N(\bm{u}),\, \bm{\zeta}\rangle| &\leq C \left[ \|\textrm{div}
			(\mathbb{C}\widehat{\nabla}\bm{u})\|_{V'(\mathbb{R}^3_-)} \|\bm{\phi}\|_{V(\mathbb{R}^3_-)} + \|
			\bm{u}\|_{H^0_{-1}(\mathbb{R}^3_-)} \|\textrm{div}(\mathbb{C}\widehat{\nabla}\bm{\phi})\|
			_{H^0_1(\mathbb{R}^3_-)}   \right]\\
			&\leq C \|\bm{u}\|_{E_0(\mathbb{R}^3_-)} \|\bm{\phi}\|_{H^2_1(\mathbb{R}^3_-)}\leq C \|\bm{u}\|
			_{E_0(\mathbb{R}^3_-)} \|\bm{\zeta}\|_{H^{3/2}_1(\mathbb{R}^2)}.
		\end{aligned}
	\end{equation*}	
That is, \eqref{eq: func gamma_N} holds and the statement of the proposition follows.	
\end{proof}

Next, we prove existence and uniqueness of a very weak solution.

\begin{theorem}[Very weak solution]\label{th: well-posed in H^0_-1}
For any $\bm{f}\in V'(\mathbb{R}^3_-)$, there exists a unique solution $\bm{u}\in H^0_{-1}(\mathbb{R}^3_-)$ to 
Problem \eqref{eq: reg_prob} such that
	\begin{equation*}
		\|\bm{u}\|_{H^0_{-1}(\mathbb{R}^3_-)}\leq\, C\, \|\bm{f}\|_{V'(\mathbb{R}^3_-)}.
	\end{equation*}
\end{theorem}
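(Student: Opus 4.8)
The plan is a transposition (duality) argument in the spirit of Lions--Magenes, with Theorem~\ref{th: sol in H^2_1} supplying the key input. Consider the linear map
\[
   \mathcal{A}\colon V(\RR^3_-)\longrightarrow H^0_1(\RR^3_-),\qquad \mathcal{A}\bm{\phi}:=\textrm{div}(\mathbb{C}\widehat{\nabla}\bm{\phi}).
\]
I would first verify that $\mathcal{A}$ is a topological isomorphism. Boundedness is routine from Assumptions~\eqref{ass: c1,1 regularity}--\eqref{eq: decay_cond_lame}: splitting $\textrm{div}(\mathbb{C}\widehat{\nabla}\bm{\phi})$ into a second-order term with bounded coefficient $\mathbb{C}$ and a first-order term with coefficient $\nabla\mathbb{C}$ (where $|\nabla\mathbb{C}|\le C\varrho^{-1}$), one gets $\|\mathcal{A}\bm{\phi}\|_{H^0_1(\RR^3_-)}\le C\,\|\bm{\phi}\|_{H^2_1(\RR^3_-)}$. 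Surjectivity, together with continuity of the inverse, is precisely Theorem~\ref{th: sol in H^2_1}: every $\bm{g}\in H^0_1(\RR^3_-)$ equals $\mathcal{A}\bm{u}$ for the strong solution $\bm{u}\in H^2_1(\RR^3_-)$ produced there, which lies in $V(\RR^3_-)$ because it satisfies the traction-free condition, and \eqref{eq: stab_est_H^2_1} gives $\|\bm{u}\|_{H^2_1(\RR^3_-)}\le C\,\|\bm{g}\|_{H^0_1(\RR^3_-)}$. Injectivity follows from the uniqueness part of Theorem~\ref{lem: weak_sol}: if $\bm{\phi}\in V(\RR^3_-)$ and $\mathcal{A}\bm{\phi}=\bm{0}$, then $\bm{\phi}\in H^0_{-1}(\RR^3_-)$ with $\widehat{\nabla}\bm{\phi}\in L^2(\RR^3_-)$, hence $\bm{\phi}\in H^1_0(\RR^3_-)$ by the weighted Korn inequality~\eqref{eq: Korn}, so $\bm{\phi}$ is the zero weak solution of \eqref{eq: reg_prob}.

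The existence, uniqueness and stability bound are then immediate by transposition. Since $\mathcal{A}$ is an isomorphism, so is its Banach-space transpose $\mathcal{A}^{t}\colon\big(H^0_1(\RR^3_-)\big)'\to V'(\RR^3_-)$; identifying $\big(H^0_1(\RR^3_-)\big)'=H^0_{-1}(\RR^3_-)$ via the pairing $\int_{\RR^3_-}\bm{u}\cdot\bm{g}\,d\bm{x}=\int_{\RR^3_-}(\varrho^{-1}\bm{u})\cdot(\varrho\,\bm{g})\,d\bm{x}$, I would, for a given $\bm{f}\in V'(\RR^3_-)$, set $\bm{u}:=(\mathcal{A}^{t})^{-1}\bm{f}\in H^0_{-1}(\RR^3_-)$. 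Unwinding the definition of the transpose, $\bm{u}$ is the unique element of $H^0_{-1}(\RR^3_-)$ with
\[
     \int_{\RR^3_-}\bm{u}\cdot\textrm{div}(\mathbb{C}\widehat{\nabla}\bm{\phi})\,d\bm{x}=\langle\bm{f},\bm{\phi}\rangle_{(V'(\RR^3_-),V(\RR^3_-))},\qquad\forall\,\bm{\phi}\in V(\RR^3_-),
\]
and $\|\bm{u}\|_{H^0_{-1}(\RR^3_-)}\le\|(\mathcal{A}^{t})^{-1}\|\,\|\bm{f}\|_{V'(\RR^3_-)}$. (Equivalently, one represents the bounded functional $\bm{g}\mapsto\langle\bm{f},\mathcal{A}^{-1}\bm{g}\rangle_{(V',V)}$ on $H^0_1(\RR^3_-)$ by an element of $H^0_{-1}(\RR^3_-)$ through the Riesz theorem.)

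It remains to recognize this $\bm{u}$ as a solution of Problem~\eqref{eq: reg_prob}. Restricting the test functions above to $\bm{\phi}\in\mathcal{D}(\RR^3_-)\subset V(\RR^3_-)$, where $\mathcal{A}\bm{\phi}$ is the classical divergence and the pairing reduces to the distributional one (with no boundary contribution), gives $\textrm{div}(\mathbb{C}\widehat{\nabla}\bm{u})=\bm{f}$ in $\mathcal{D}'(\RR^3_-)$; since $\bm{f}\in V'(\RR^3_-)$ this places $\bm{u}$ in $E_0(\RR^3_-)$, so that $(\mathbb{C}\widehat{\nabla}\bm{u})\bm{e}_3\in H^{-3/2}_{-1}(\RR^2)$ is well defined by \eqref{eq: func gamma_N}. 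Inserting the very weak formulation into Green's formula \eqref{eq: green's formulas} now yields $\langle(\mathbb{C}\widehat{\nabla}\bm{u})\bm{e}_3,\bm{\phi}\rangle_{(H^{-3/2}_{-1}(\RR^2),H^{3/2}_1(\RR^2))}=0$ for all $\bm{\phi}\in V(\RR^3_-)$; since the Dirichlet trace maps $V(\RR^3_-)$ onto $H^{3/2}_1(\RR^2)$ (via the lifting used in the proof of Green's formula, \cite[Lemma 2.2]{Amrouche-Dambrine-Raudin}), we conclude $(\mathbb{C}\widehat{\nabla}\bm{u})\bm{e}_3=\bm{0}$ on $\{x_3=0\}$.

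The heart of the argument is really Theorem~\ref{th: sol in H^2_1}; once that is in hand, existence, uniqueness and the estimate are a soft consequence of the transposition. The step demanding the most care is the last identification --- matching the two meanings of $\textrm{div}(\mathbb{C}\widehat{\nabla}\bm{u})$, as a distribution on $\RR^3_-$ and as an element of $V'(\RR^3_-)$, and reading off the traction-free condition from the variational identity --- for which the density Lemma~\ref{lem: density} (density of $\mathcal{D}(\overline{\RR^3_-})$ in $E_0(\RR^3_-)$) and Green's formula \eqref{eq: green's formulas} are the essential tools.
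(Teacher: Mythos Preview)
Your proof is correct and follows essentially the same transposition argument as the paper: both exploit Theorem~\ref{th: sol in H^2_1} to view $\mathcal{A}\colon V(\RR^3_-)\to H^0_1(\RR^3_-)$ as an isomorphism and recover $\bm{u}$ from its transpose (the paper phrases this via the functional $\overline{\bm{f}}\mapsto\langle\bm{f},\mathcal{A}^{-1}\overline{\bm{f}}\rangle$ and the Riesz representation, which you yourself note is equivalent). Your final paragraph, checking the PDE on $\mathcal{D}(\RR^3_-)$-test functions and then extracting the traction-free condition from Green's formula~\eqref{eq: green's formulas} and the surjectivity of the trace, makes explicit what the paper compresses into its opening assertion that Problem~\eqref{eq: reg_prob} is equivalent to the very weak variational formulation.
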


\begin{proof}
Thanks to the Green's formula \eqref{eq: green's formulas}, for any $\bm{f}\in V'(\mathbb{R}^3_-)$ Problem 
\eqref{eq: reg_prob} is equivalent to the following variational formulation:\\
\textit{find $\bm{u}\in E_0(\mathbb{R}^3_-)$  such that 
	\begin{equation}
		\int_{\mathbb{R}^3_-}\bm{u}\cdot \textup{\textrm{div}}(\mathbb{C}\widehat{\nabla}\bm{v})\, 
		d\bm{x}=\langle \bm{f}, \bm{v}\rangle_{V'(\mathbb{R}^3_-),V(\mathbb{R}^3_-)},\qquad \forall \bm{v}\in 
		V(\mathbb{R}^3_-).
	\end{equation}  
} 
We first note that, from the well-posedness of \eqref{eq: reg_prob} in $H^2_1(\mathbb{R}^3_-)$ (Theorem 
\ref{th: sol in H^2_1}), for any $\bm{\overline{f}}\in H^0_1(\mathbb{R}^3_-)$ there exists $\overline{\bm{v}}\in 
V(\mathbb{R}^3_-)$ satisfying \eqref{eq: reg_prob} with $\bm{f}$ replaced by $\overline{\bm{f}}$ such that
	\begin{equation*}
		\|\overline{\bm{v}}\|_{V(\mathbb{R}^3_-)}= \|\overline{\bm{v}}\|_{H^2_1(\mathbb{R}^3_-)}\leq\, c\, \|
		\bm{\overline{f}}\|_{H^0_1(\mathbb{R}^3_-)}.
	\end{equation*}
The linear functional 
	\begin{equation*}
		\Phi_{\bm{f}}(\bm{\overline{f}})=\langle \bm{f}, \overline{\bm{v}}\rangle_{(V'(\mathbb{R}^3_-),\, 
		V(\mathbb{R}^3_-))}
	\end{equation*}	
is then continuous, as
	\begin{equation*}
		|\Phi_{\bm{f}}(\bm{\overline{f}})|\leq C\, \|\bm{f}\|_{V'(\mathbb{R}^3_-)}\|\overline{\bm{v}}\|
		_{V(\mathbb{R}^3_-)}\leq C \|\bm{f}\|_{V'(\mathbb{R}^3_-)}\|\bm{\overline{f}}\|
		_{H^0_1(\mathbb{R}^3_-)}.
	\end{equation*}	
Consequently, from the  Riesz Representation Theorem there exists a unique $\bm{u}\in H^0_{-1}(\mathbb{R}^3_-)$ 
such that
	\begin{equation*}
		\Phi_{\bm{f}}(\bm{\overline{f}})=\langle \bm{u}, \bm{\overline{f}} \rangle_{(H^0_{-1}(\mathbb{R}^3_-),\, 
		H^0_1(\mathbb{R}^3_-))},\qquad \forall \bm{\overline{f}}\in H^0_1(\mathbb{R}^3_-).
	\end{equation*} 
Since the solution operator of Problem \eqref{le: distr_sol} for strong solutions
	\begin{equation*}
		\begin{aligned}
			\Phi: H^0_1(\mathbb{R}^3_-)&\longrightarrow V(\mathbb{R}^3_-)\\
				\bm{\overline{f}}&\longrightarrow \overline{\bm{v}} 
		\end{aligned}
	\end{equation*}	
is an isomorphism, the assertion of the theorem follows.	
\end{proof}

We lastly address  the well-posedness of  the source  problem, Problem \eqref{le: distr_sol}, by means of 
interpolation. We recall Formula \eqref{eq:interp3.2}, 
\[
      \left[ H^{-2}_{-1} (\RR^3_-), H^{-1}_0(\RR^3_-)\right]_{1-\Theta,2}= H^{-1-\Theta}_{-\Theta}(\RR^3_-). 
\]
The above result is relevant in view of the following auxiliary result.

\begin{proposition}\label{prop: H-2_-1 contained in V'}
If $\bm{f}\in H^{-2}_{-1}(\mathbb{R}^3_-)$ has compact support, then $\bm{f}\in V'(\mathbb{R}^3_-)$. 
\end{proposition}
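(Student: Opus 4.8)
The plan is to turn $\bm{f}$, which a priori is only a bounded functional on $\mathring{H}^2_1(\RR^3_-)$, into a canonical bounded functional on the larger space $V(\RR^3_-)$, exploiting that $\bm{f}$ is supported on a fixed compact set $K:=\mathrm{supp}(\bm{f})$ lying at positive distance from $\{x_3=0\}$ (as is always the case here, since in the application $K=\overline{S}$). Fix once and for all a cutoff $\chi\in\mathcal{D}(\RR^3_-)$ with $\chi\equiv 1$ on a neighbourhood $U$ of $K$ and $\mathrm{supp}(\chi)\subset\RR^3_-$ compact. The one genuine technical point is the claim that multiplication by $\chi$ maps $H^2_1(\RR^3_-)$ boundedly into $\mathring{H}^2_1(\RR^3_-)$: by the Leibniz rule all derivatives of $\chi\bm{v}$ up to order two are supported in the fixed compact set $\mathrm{supp}(\chi)$, on which the weight $\varrho$ is bounded above and below by positive constants, so the $H^2_1$-norm of $\chi\bm{v}$ is controlled by $C_\chi\,\|\bm{v}\|_{H^2_1(\RR^3_-)}$; moreover $\chi\bm{v}\in H^2(\RR^3_-)$ has compact support in the open half space, hence lies in $\mathring{H}^2_1(\RR^3_-)=\overline{\mathcal{D}(\RR^3_-)}$ by mollification (again using the equivalence of the weighted and unweighted $H^2$-norms on a fixed compact set). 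This is exactly where the positive distance of $K$ from $\{x_3=0\}$ is used; the assertion would fail for a source carried by the boundary.

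With this in hand, for $\bm{v}\in V(\RR^3_-)\subset H^2_1(\RR^3_-)$ I define
\[
   \langle \bm{f},\bm{v}\rangle_{(V'(\RR^3_-),V(\RR^3_-))} := \langle \bm{f},\chi\bm{v}\rangle_{(H^{-2}_{-1}(\RR^3_-),\,\mathring{H}^2_1(\RR^3_-))}.
\]
Then
\[
   \bigl|\langle \bm{f},\bm{v}\rangle\bigr| \le \|\bm{f}\|_{H^{-2}_{-1}(\RR^3_-)}\,\|\chi\bm{v}\|_{H^2_1(\RR^3_-)} \le C_\chi\,\|\bm{f}\|_{H^{-2}_{-1}(\RR^3_-)}\,\|\bm{v}\|_{V(\RR^3_-)},
\]
so $\bm{f}$ defines a bounded linear functional on $V(\RR^3_-)$, that is, $\bm{f}\in V'(\RR^3_-)$.

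Finally I would check that this definition does not depend on the auxiliary cutoff and that it extends the original pairing on $\mathring{H}^2_1(\RR^3_-)\subset V(\RR^3_-)$ (the inclusion holding because elements of $\mathring{H}^2_1$ have vanishing gradient trace, hence satisfy the Neumann-type condition defining $V$); both facts follow from the support property of $\bm{f}$. If $\chi_1,\chi_2$ are two admissible cutoffs, choose $\eta\in\mathcal{D}(\RR^3_-)$ with $\eta\equiv 1$ on a neighbourhood of $K$ and $\mathrm{supp}(\eta)$ contained in the region where $\chi_1\equiv\chi_2\equiv 1$; since $\eta\bm{f}=\bm{f}$ (as $\eta\equiv1$ near $\mathrm{supp}(\bm{f})$) and $\eta(\chi_1-\chi_2)\equiv0$, one gets $\langle\bm{f},(\chi_1-\chi_2)\bm{v}\rangle=\langle\eta\bm{f},(\chi_1-\chi_2)\bm{v}\rangle=\langle\bm{f},\eta(\chi_1-\chi_2)\bm{v}\rangle=0$, where multiplication of $\bm{f}$ by a $\mathcal{D}(\RR^3_-)$-function is understood as the transpose of the corresponding bounded multiplication operator on $\mathring{H}^2_1(\RR^3_-)$; the same computation with $\chi_2\equiv1$ yields agreement with $\langle\bm{f},\cdot\rangle$ on $\mathring{H}^2_1(\RR^3_-)$. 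The argument is entirely soft — no interpolation or elliptic regularity enters — and the only place demanding care is the boundedness/density claim of the first paragraph; the proposition is really just the statement that a compactly supported element of $H^{-2}_{-1}(\RR^3_-)$ pairs continuously against $V(\RR^3_-)$.
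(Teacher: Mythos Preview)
Your proof is correct and follows essentially the same approach as the paper: both introduce a cutoff $\chi$ (the paper calls it $\psi$) equal to $1$ near $\mathrm{supp}(\bm{f})$ and compactly supported in $\RR^3_-$, and define the pairing with $\bm{v}\in V(\RR^3_-)$ as $\langle \bm{f},\chi\bm{v}\rangle_{(H^{-2}_{-1},\mathring{H}^2_1)}$, bounding $\|\chi\bm{v}\|_{\mathring{H}^2_1}\le C\|\bm{v}\|_{H^2_1}$. Your version is in fact more careful, explicitly verifying that multiplication by $\chi$ lands in $\mathring{H}^2_1(\RR^3_-)$ and that the construction is independent of the choice of cutoff, points the paper leaves implicit.
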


\begin{proof}
We define $\psi$, a regular cut-off function in $\mathbb{R}^3_-$, such that $\psi=1$ on a compact neighborhood of the support of $\bm{f}$. Then
	\begin{equation*}
	\langle \bm{f}, \bm{v}\rangle_{(V'(\mathbb{R}^3_-), V(\mathbb{R}^3_-))}=\langle \bm{f}\psi, 
	\bm{v}\rangle_{(V'(\mathbb{R}^3_-), V(\mathbb{R}^3_-))}=\langle \bm{f}, \psi\bm{v}\rangle_{(H^{-2}_{-1}
	(\mathbb{R}^3_-), \mathring{H}^2_{1}(\mathbb{R}^3_-))},
	\end{equation*}
is well defined and satisfies
	\begin{equation*}
		\Big|\langle \bm{f}, \bm{v}\rangle_{(V'(\mathbb{R}^3_-), V(\mathbb{R}^3_-))}\Big|\leq \|\bm{f}\|
		_{H^{-2}_{-1}(\mathbb{R}^3_-)}\|\psi\bm{v}\|_{\mathring{H}^2_{1}(\mathbb{R}^3_-)}\leq c\|\bm{f}\|
		_{H^{-2}_{-1}(\mathbb{R}^3_-)}\|\bm{v}\|_{{H}^2_{1}(\mathbb{R}^3_-)}.
	\end{equation*}
The assertion  follows.	
\end{proof}

\begin{remark}\label{rem: well_posedness in H^-2_-1}
By Proposition \ref{prop: H-2_-1 contained in V'}, Theorem \ref{th: well-posed in H^0_-1} also holds for any 
$\bm{f}\in H^{-2}_{-1}(\mathbb{R}^3_-)$ with compact support in $\mathbb{R}^3_-$. 
\end{remark}

We are now in {the} position to establish  the well-posedness of \eqref{le: distr_sol}.

\begin{theorem}\label{th: very_weak_sol}
Problem \eqref{le: distr_sol} with source term given in \eqref{eq: source_term}
has a unique distributional solution $\bm{u}\in H^{{1}/{2}-\varepsilon}_{-{1}/{2}-\varepsilon}(\mathbb{R}^3_-)$, 
for all $\varepsilon>0$.
\end{theorem}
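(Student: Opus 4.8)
The plan is to realize Theorem \ref{th: very_weak_sol} as the assembly step (iv) of the scheme in Section \ref{sec: the direct problem}: interpolate the weak-solution theory of Theorem \ref{lem: weak_sol} with the very-weak-solution theory of Theorem \ref{th: well-posed in H^0_-1} (as extended in Remark \ref{rem: well_posedness in H^-2_-1} through Proposition \ref{prop: H-2_-1 contained in V'}). Theorem \ref{lem: weak_sol} supplies a bounded solution operator $T\colon H^{-1}_0(\RR^3_-)\to H^1_0(\RR^3_-)$, while Theorem \ref{th: well-posed in H^0_-1} with Remark \ref{rem: well_posedness in H^-2_-1} supplies a bounded solution operator $T\colon \{\bm f\in H^{-2}_{-1}(\RR^3_-):\ \bm f\ \text{has compact support}\}\to H^0_{-1}(\RR^3_-)$. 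These two operators are consistent: if $\bm f$ lies in both domains then the $H^1_0$-solution also lies in $H^0_{-1}(\RR^3_-)$ (indeed $H^1_0(\RR^3_-)\hookrightarrow H^0_{-1}(\RR^3_-)$) and solves the problem, so the uniqueness part of Theorem \ref{th: well-posed in H^0_-1} identifies the two. Since $\bm f_S$ has compact support $\overline S\subset\RR^3_-$, I would fix a compact $K\subset\RR^3_-$ with $\overline S\subset\operatorname{int}K$ and work throughout with the closed subspaces of distributions supported in $K$, using the elementary observation that a compactly supported element of the unweighted $H^{-s}(\RR^3_-)$ belongs to $H^{-s}_{-\alpha}(\RR^3_-)$ for every $\alpha$ (pair against $\psi\bm\varphi$ for a cutoff $\psi\in\mathcal D(\RR^3_-)$ with $\psi\equiv 1$ near $K$, the weights being comparable to constants on $K$).

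Next I would interpolate the two endpoint operators with the single parameter $\eta=\tfrac12-\varepsilon$, assuming $0<\varepsilon<\tfrac12$ (for $\varepsilon\ge\tfrac12$ the statement follows a fortiori from inclusion of the weighted spaces). On the data side, formula \eqref{eq:interp3.2} — written with $1-\Theta=\eta$, i.e. $\Theta=\tfrac12+\varepsilon$ — gives
\[
   \bigl[H^{-2}_{-1}(\RR^3_-),\,H^{-1}_0(\RR^3_-)\bigr]_{\eta,2}=H^{-\frac32-\varepsilon}_{-\frac12-\varepsilon}(\RR^3_-),
\]
and on the solution side formula \eqref{eq:interp2.1} with parameter $\eta=\tfrac12-\varepsilon$ gives
\[
   \bigl[H^{0}_{-1}(\RR^3_-),\,H^{1}_0(\RR^3_-)\bigr]_{\eta,2}=H^{\frac12-\varepsilon}_{-\frac12-\varepsilon}(\RR^3_-).
\]
One checks that the hypotheses behind \eqref{eq:interp0} are met for these parameters ($d=3$; the regularity indices $\tfrac32+\varepsilon$ and $\tfrac12-\varepsilon$ are non-integer and lie in $(0,2)$; $\alpha\ge-1$ and $d/2+\alpha\ne s$ hold), and likewise that the support-restricted subspaces interpolate in the same fashion by a cutoff argument. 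The interpolation theorem for bounded operators then yields a bounded $T\colon H^{-3/2-\varepsilon}_{-1/2-\varepsilon}(\RR^3_-)\to H^{1/2-\varepsilon}_{-1/2-\varepsilon}(\RR^3_-)$. By Proposition \ref{prop: sobolev_space_source_term} and the compact-support observation above, $\bm f_S\in H^{-3/2-\varepsilon}_{-1/2-\varepsilon}(\RR^3_-)$, hence $\bm u:=T\bm f_S\in H^{1/2-\varepsilon}_{-1/2-\varepsilon}(\RR^3_-)$ is a distributional solution of \eqref{le: distr_sol}.

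For uniqueness in $H^{1/2-\varepsilon}_{-1/2-\varepsilon}(\RR^3_-)$, suppose $\bm u$ solves \eqref{le: distr_sol} with $\bm f_S$ replaced by $\bm 0$. From the definition of the fractional weighted space with $s=\tfrac12-\varepsilon$ and $\alpha=-\tfrac12-\varepsilon$ we get $\varrho^{\alpha-s}\bm u=\varrho^{-1}\bm u\in L^2(\RR^3_-)$, so $\bm u\in H^0_{-1}(\RR^3_-)$; since moreover $\operatorname{div}(\CC\widehat\nabla\bm u)=\bm 0\in V'(\RR^3_-)$, we have $\bm u\in E_0(\RR^3_-)$, it solves the homogeneous problem in the sense of Theorem \ref{th: well-posed in H^0_-1}, and that theorem forces $\bm u=\bm 0$. (The equivalence of \eqref{le: distr_sol} with the transmission problem \eqref{eq: Pu} is the content of the forthcoming Lemma \ref{lem: equivalence_two_problems}.)

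The genuinely analytic work — Lax--Milgram in Theorem \ref{lem: weak_sol}, the tangential-derivative bootstrap of Theorem \ref{th: sol in H^2_1}, the duality argument of Theorem \ref{th: well-posed in H^0_-1}, and the density Lemma \ref{lem: density} — is already in place, so the main obstacle here is functional-analytic bookkeeping: the two endpoint solution operators are not a priori defined on a common pair of \emph{weighted} Sobolev spaces (the very-weak operator lives on $V'$, equivalently on compactly supported elements of $H^{-2}_{-1}$), so one must first restrict to distributions supported in a fixed compact set, verify that $T$ is consistently defined and bounded there, and check that those support-restricted subspaces interpolate in accordance with \eqref{eq:interp2.1}--\eqref{eq:interp3.2}. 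The only other delicate point is the index arithmetic: pinning down the unique value $\eta=\tfrac12-\varepsilon$ that simultaneously places the data in $H^{-3/2-\varepsilon}_{-1/2-\varepsilon}(\RR^3_-)$ and the solution in $H^{1/2-\varepsilon}_{-1/2-\varepsilon}(\RR^3_-)$, and checking the non-integrality and weight restrictions required by the interpolation identities.
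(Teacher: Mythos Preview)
Your proposal is correct and follows essentially the same route as the paper: restrict to distributions supported in a fixed compact set, interpolate the solution operator between the endpoints provided by Theorem \ref{lem: weak_sol} and Theorem \ref{th: well-posed in H^0_-1} (via Remark \ref{rem: well_posedness in H^-2_-1}), using \eqref{eq:interp2.1} and \eqref{eq:interp3.2} with parameter $\tfrac12-\varepsilon$, and then apply the resulting operator to $\bm f_S$. Two minor remarks: first, Theorem \ref{lem: weak_sol} actually gives $T$ on $(H^1_0)'(\RR^3_-)$ rather than on $H^{-1}_0(\RR^3_-)=(\mathring H^1_0)'$, but since you immediately restrict to compactly supported data this distinction is harmless (and the paper makes the same restriction); second, your explicit uniqueness argument via the embedding $H^{1/2-\varepsilon}_{-1/2-\varepsilon}(\RR^3_-)\hookrightarrow H^0_{-1}(\RR^3_-)$ and Theorem \ref{th: well-posed in H^0_-1} is a welcome addition that the paper leaves implicit.
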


\begin{proof}
We specialize the interpolation Formula \eqref{eq:interp3.2} to the case 
$\Theta=1/2+\varepsilon$ to obtain:
\begin{equation} \label{eq:interp_very_weak_1}
	H^{-{3}/{2}-\varepsilon}_{-{1}/{2}-\varepsilon}(\mathbb{R}^3_-)=[{H^{-2}_{-1}}(\mathbb{R}^3_-), 
	{H^{-1}_{0}}(\mathbb{R}^3_-)]_{\tfrac{1}{2}-\varepsilon,2},
\end{equation}
and similarly Formula \eqref{eq:interp2.1} to the case $\Theta=1/2-\varepsilon$ to obtain:
\begin{equation}\label{eq:interp_very_weak_2}
	H^{{1}/{2}-\varepsilon}_{-{1}/{2}-\varepsilon}(\mathbb{R}^3_-)=[{H^{0}_{-1}}(\mathbb{R}^3_-), 
	{H^{1}_{0}}(\mathbb{R}^3_-)]_{\tfrac{1}{2}-\varepsilon,2}.
\end{equation}
From Proposition \eqref{prop: sobolev_space_source_term}, $\bm{f}_S\in H^{-{3}/{2}-\varepsilon}$ and,
since it has compact support in $\mathbb{R}^3_-$ by  \eqref{eq: source_term}, we deduce that  
	\begin{equation*}
		\bm{f}_S\in H^{-{3}/{2}-\varepsilon}_{-{1}/{2}-\varepsilon}(\mathbb{R}^3_-).
	\end{equation*}
By Theorems \ref{lem: weak_sol} and  \ref{th: well-posed in H^0_-1}, and Remark \ref{rem: well_posedness in H^-2_-1}, we have a bounded 
solution operator $\Phi$, where $\Phi(\bm{f})=\bu$,  mapping
\[ 
     \Phi: V'(\RR^3_-) \to H^0_{-1}(\RR^3_-), \qquad \Phi: (H^{1}_0)'(\RR^3_-) \to H^1_0(\RR^3_-).
\]	
From the hypotheses on the dislocation surface, we can assume that $S\subset \Omega$, where $\Omega$ is a bounded open set such that $\overline{\Omega}\subset \RR^3_-$. We then restricts all source terms $\bm{f}$ to have compact support in 
$\overline{\Omega}$, and we denote by $H^{-s}_{-\alpha,\Omega}(\RR^3_-)$ the closed subspace of $H^{-s}_{-\alpha}(\RR^3_-)$ of distributions with support in $\overline{\Omega}$. Then $\bm{f}_S\in   H^{-{3}/{2}-\varepsilon}_{-{1}/{2}-\varepsilon,\Omega}(\mathbb{R}^3_-)$. Furthermore, since $V'(\mathbb{R}^3_-)\supset H^{-2}_{-1,\Omega}(\mathbb{R}^3_-)$ and $(H^1_0)'(\mathbb{R}^3_-)\supset H^{-1}_{0,\Omega}(\mathbb{R}^3_-)$, from \eqref{eq:interp_very_weak_1} we have:
	\begin{equation*}
		H^{-{3}/{2}-\varepsilon}_{-{1}/{2}-\varepsilon,\Omega}(\mathbb{R}^3_-)= [H^{-2}_{-1,\Omega}(\mathbb{R}^3_-),
		H^{-1}_{0,\Omega}(\mathbb{R}^3_-)]_{\tfrac{1}{2}-\varepsilon,2}\subset [ V'(\RR^3_-),
		{(H^{1}_{0})'}(\mathbb{R}^3_-)]_{\tfrac{1}{2}-\varepsilon,2}.
	\end{equation*} 	
By interpolation, $\Phi$ extends as a continuous solution operator 
\[
     \Phi:  [ V'(\RR^3_-),(H^{1}_{0}(\mathbb{R}^3_-))']_{\tfrac{1}{2}-\varepsilon,2} \to 
       [{H^{0}_{-1}}(\mathbb{R}^3_-) ,
	{H^{1}_{0}}(\mathbb{R}^3_-)]_{\tfrac{1}{2}-\varepsilon,2},
\]
so that, using \eqref{eq:interp_very_weak_2}, we also have:
\[    
    \Phi:  H^{-{3}/{2}-\varepsilon}_{-{1}/{2}-\varepsilon,\Omega}(\mathbb{R}^3_-) \to H^{{1}/{2}-\varepsilon}_{-{1}/{2}-\varepsilon}(\mathbb{R}^3_-),
\]
as a continuous operator, which gives the conclusion of the theorem.
\end{proof}

We close the discussion of well-posedness of the direct problem by showing the equivalence of the transmission 
problem to the source problem.

\begin{lemma}\label{lem: equivalence_two_problems}
 Problem  \eqref{eq: Pu} and Problem \eqref{le: distr_sol} are equivalent. 
\end{lemma}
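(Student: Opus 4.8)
The plan is to establish that $\bm{u}$ solves \eqref{eq: Pu} in the sense of a transmission problem (across $S$, in $\RR^3_-\setminus\overline S$) if and only if $\bm{u}$ solves \eqref{le: distr_sol} as a distributional equation on all of $\RR^3_-$ with source $\bm{f}_S=\textrm{div}(\mathbb{C}(\bm{g}\otimes\bm{n})\delta_S)$. The natural framework is the variational/distributional formulation: test both problems against $\bm{\phi}\in\mathcal{D}(\overline{\RR^3_-})$ and compute the distributional divergence of $\mathbb{C}\widehat{\nabla}\bm{u}$ viewed as a function (or tensor field) that is only defined off $S$. Recall from the earlier part of the paper that the solution of \eqref{le: distr_sol} lies in $H^{1/2-\varepsilon}_{-1/2-\varepsilon}(\RR^3_-)$ and, by the double-layer-potential representation and potential theory on Lipschitz surfaces (\cite{Mitrea-Taylor}), it is locally $H^s$ for $s<1$ in $\RR^3_-\setminus S$; in particular $\widehat{\nabla}\bm{u}$ makes sense as an $L^2_{\mathrm{loc}}$ field away from $S$ and admits non-tangential traces from each side of $S$, so the jumps $[\bm{u}]_S$ and $[(\mathbb{C}\widehat{\nabla}\bm{u})\bm{n}]_S$ are well defined.

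The key computation is the following. Fix a bounded Lipschitz domain $\Omega$ with $S\subset\partial\Omega$, $\overline\Omega\subset\RR^3_-$ (as constructed in Proposition \ref{prop: sobolev_space_source_term}), so that near $S$ the surface locally separates two regions; write $\Omega_\pm$ for the two sides with outward normals $\pm\bm{n}$ on $S$. For $\bm{\phi}\in\mathcal{D}(\overline{\RR^3_-})$, integrate $\textrm{div}(\mathbb{C}\widehat{\nabla}\bm{u})$ against $\bm{\phi}$ over $\RR^3_-$, splitting the integral into the two sides of $S$ and integrating by parts on each side using the local $H^1$-regularity there. One obtains
\begin{equation*}
  \int_{\RR^3_-\setminus S}\!\!\! \textrm{div}(\mathbb{C}\widehat{\nabla}\bm{u})\cdot\bm{\phi}\,d\bm{x}
  = -\!\int_{\RR^3_-}\!\!\!\mathbb{C}\widehat{\nabla}\bm{u}:\widehat{\nabla}\bm{\phi}\,d\bm{x}
   + \int_{\{x_3=0\}}\!\!\!((\mathbb{C}\widehat{\nabla}\bm{u})\bm{e}_3)\cdot\bm{\phi}\,d\sigma
   + \int_S [(\mathbb{C}\widehat{\nabla}\bm{u})\bm{n}]_S\cdot\bm{\phi}\,d\sigma.
\end{equation*}
Separately, the distribution $\bm{f}_S=\textrm{div}(\mathbb{C}(\bm{g}\otimes\bm{n})\delta_S)$ pairs with $\bm{\phi}$ as
\begin{equation*}
  \langle \bm{f}_S,\bm{\phi}\rangle = -\langle \mathbb{C}(\bm{g}\otimes\bm{n})\delta_S,\nabla\bm{\phi}\rangle
  = -\int_S \mathbb{C}(\bm{g}\otimes\bm{n}):\nabla\bm{\phi}\,d\sigma,
\end{equation*}
and one rewrites $\mathbb{C}(\bm{g}\otimes\bm{n}):\nabla\bm{\phi}=(\mathbb{C}\widehat{\nabla}\bm{\phi})\bm{n}\cdot\bm{g}$ using the minor symmetries of $\mathbb{C}$. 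The required bookkeeping then shows that the distributional identity $\textrm{div}(\mathbb{C}\widehat{\nabla}\bm{u})=\bm{f}_S$ in $\RR^3_-$ together with the boundary condition on $\{x_3=0\}$ is equivalent to: $\textrm{div}(\mathbb{C}\widehat{\nabla}\bm{u})=\bm{0}$ in $\RR^3_-\setminus\overline S$, $(\mathbb{C}\widehat{\nabla}\bm{u})\bm{e}_3=\bm{0}$ on $\{x_3=0\}$, $[(\mathbb{C}\widehat{\nabla}\bm{u})\bm{n}]_S=\bm{0}$, and $[\bm{u}]_S=\bm{g}$ — the last jump condition being forced because the term $\int_S\mathbb{C}(\bm{g}\otimes\bm{n}):\nabla\bm{\phi}\,d\sigma$ is exactly the distributional derivative one picks up from a function with a prescribed jump $\bm{g}$ across $S$ (i.e., $\textrm{div}(\mathbb{C}\widehat{\nabla}(\bm{u}+\text{correction}))$ produces the surface dipole). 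Conversely, if $\bm{u}$ solves \eqref{eq: Pu}, the same splitting-and-integration-by-parts computation, now run backwards, reproduces $\langle\textrm{div}(\mathbb{C}\widehat{\nabla}\bm{u}),\bm{\phi}\rangle=\langle\bm{f}_S,\bm{\phi}\rangle$ for every test function, so $\bm{u}$ solves \eqref{le: distr_sol}. Uniqueness on each side transfers from the uniqueness statement of Theorem \ref{th: very_weak_sol}.

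The main obstacle is justifying the integration by parts on each side of $S$ at the level of regularity actually available: $\bm{u}$ is only $H^s_{\mathrm{loc}}$, $s<1$, near $S$, so $(\mathbb{C}\widehat{\nabla}\bm{u})\bm{n}$ is a priori only a distribution on $S$ rather than an $L^2$ trace, and care is needed to make $[(\mathbb{C}\widehat{\nabla}\bm{u})\bm{n}]_S$ and the surface pairings meaningful. This is handled by working away from $\partial S$ (where the potential-theoretic estimates of \cite{Mitrea-Taylor} give genuine non-tangential traces and the jump relations for double-layer potentials), covering $S$ by a family of such patches, and using a partition of unity together with the fact that the singularities of $\bm{u}$ concentrate near $\partial S$ and $\bm{g}\in H^{1/2}(S)$; the surface integrals are then interpreted as the appropriate $H^{-1/2}$–$H^{1/2}$ dualities on $S$. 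Once the boundary and interface terms are shown to be well-defined in this sense, matching the two computations is routine algebra using the symmetries of $\mathbb{C}$ and the definition \eqref{eq: source_term} of $\bm{f}_S$.
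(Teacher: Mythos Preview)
Your approach is essentially the same as the paper's: split a neighbourhood of $S$ into its two sides, integrate by parts on each side, and match the resulting jump terms against the pairing $\langle \bm{f}_S,\bm{\phi}\rangle=-\int_S \mathbb{C}(\bm{g}\otimes\bm{n}):\nabla\bm{\phi}\,d\sigma$. The paper carries this out locally from the start, fixing $\overline{\bm{x}}\in S$ and testing with $\bm{\varphi}\in\mathcal{D}(B_\eta(\overline{\bm{x}}))$ for a ball whose closure misses $\partial S$, and it justifies the low-regularity integration by parts via the Lions--Magenes Green's formulas, writing the surface terms as $H^{-\varepsilon}$--$H^{\varepsilon}$ and $H^{-1-\varepsilon}$--$H^{1+\varepsilon}$ dualities rather than the $H^{-1/2}$--$H^{1/2}$ pairings you invoke. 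It also decouples the two jump conditions cleanly by first taking $\bm{\varphi}$ constant near $\overline{\bm{x}}$, which kills the $\nabla\bm{\varphi}$ term and forces $[(\mathbb{C}\widehat{\nabla}\bm{u})\bm{n}]=\bm{0}$, and only then reads off $[\bm{u}]=\bm{g}$.

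One point to tighten in your write-up: the displayed integration-by-parts identity is only a single step (stress paired with $\widehat{\nabla}\bm{\phi}$) and therefore exhibits only the traction-jump term $\int_S[(\mathbb{C}\widehat{\nabla}\bm{u})\bm{n}]\cdot\bm{\phi}$. The condition $[\bm{u}]_S=\bm{g}$ does not appear in that formula; it enters either through a second integration by parts (putting both derivatives on $\bm{\phi}$, as the paper does) or, equivalently, through the jump formula for the distributional gradient, $\nabla\bm{u}=(\nabla\bm{u})_{\mathrm{pw}}+([\bm{u}]_S\otimes\bm{n})\,\delta_S$, which produces the dipole term you describe in words. Make that step explicit, and the argument is complete.
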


\begin{proof}
We first observe  that both solutions of \eqref{eq: Pu} and \eqref{le: distr_sol} 
satisfy $\textrm{div}(\mathbb{C}\widehat{\nabla}\bm{u})=\bm{0}$, for all $\bm{x}\in\mathbb{R}^3_-\setminus 
\overline{S}$, and $(\mathbb{C}\widehat{\nabla}\bm{u})\bm{e}_3=\bm{0}$ on $\{x_3=0\}$. Hence we have only to  verify that the jump relations on the dislocation surface $S$ are satisfied.

We take a point $\overline{\bm{x}}\in S$ and a ball $B_{\eta}(\overline{\bm{x}})$, with $\eta$ sufficiently small, 
such that $(B_{\eta}(\overline{\bm{x}})\cap S)\subset S$, since $S$ is an open surface. We indicate with 
$B^+_{\eta}(\overline{\bm{x}})$ ($B^-_{\eta}(\overline{\bm{x}})$) the half ball on the same (opposite) side of the 
unit normal vector $\bm{n}$ on the  boundary of $S$.

To simplify notation,  we define $D^+_{\eta}:=S^+\cap B^+_{\eta}(\overline{\bm{x}})$, $D^-_{\eta}:=S^-\cap 
B^-_{\eta}(\overline{\bm{x}})$ and $D_{\eta}:=S\cap B_{\eta}(\overline{\bm{x}})$ . 

Let $\bm{\varphi}\in \mathcal{D}(B_{\eta}(\overline{\bm{x}}))$ and let $\bm{u}$ be the solution to \eqref{le: 
distr_sol}. We recall that $\bu \in H^{1/2-\varepsilon}$ in a neighborhood of $S$ and that its trace can be defined in 
$H^{-\varepsilon}(S)$ in a weak sense. 
Then, since $\bm{u}$ is a solution to $\textrm{div}(\mathbb{C}\widehat{\nabla}\bm{u})=\bm{0}$ in 
$B^+_{\eta}(\overline{\bm{x}})$ and $B^-_{\eta}(\overline{\bm{x}})$, {by means of Green's formulas as in \cite[Chapter II]{Lions-Magenes}, we have
	\begin{equation}\label{eq: equivalence1}
		\begin{aligned}
			\int\limits_{B^+_{\eta}(\overline{\bm{x}})}\textrm{div}
			(\mathbb{C}\widehat{\nabla}\bm{\varphi})\cdot \bm{u}\, d\bm{x}&\\
			&\hspace{-1.5cm}=-\langle\bm{u}^+, 
			(\mathbb{C}\widehat{\nabla}\bm{\varphi})\bm{n}\rangle_{(H^{-\varepsilon}(D^+_{\eta}),\, 
			H^{\varepsilon}(D^+_{\eta}))}+\langle\, ((\mathbb{C}\widehat{\nabla}\bm{u})\bm{n})^+,\, 
			\bm{\varphi}\rangle_{(H^{-1-\varepsilon}(D^+_{\eta}),\, H^{1+\varepsilon}(D^+_{\eta}))}.
		\end{aligned}
	\end{equation}
Analogously,	  
	\begin{equation}\label{eq: equivalence2}
		\begin{aligned}
			\int\limits_{B^-_{\eta}(\overline{\bm{x}})}\textrm{div}(\mathbb{C}\widehat{\nabla}\bm{\varphi})\cdot 
			\bm{u}\, d\bm{x}&\\
			&\hspace{-1.5cm}=\langle\bm{u}^-, 
			(\mathbb{C}\widehat{\nabla}\bm{\varphi})\bm{n}\rangle_{(H^{-\varepsilon}(D^-_{\eta}),\, 
			H^{\varepsilon}(D^-_{\eta}))}-\langle\, ((\mathbb{C}\widehat{\nabla}\bm{u})\bm{n})^-,\, 
			\bm{\varphi}\rangle_{(H^{-1-\varepsilon}(D^-_{\eta}),\, H^{1+\varepsilon}(D^-_{\eta}))}.
		\end{aligned}
	\end{equation}
}	
Summing \eqref{eq: equivalence1} and \eqref{eq: equivalence2} gives:
	\begin{equation}\label{eq: equivalence3}
		\begin{aligned}
			\int\limits_{B_{\eta}(\overline{\bm{x}})}\textrm{div}(\mathbb{C}\widehat{\nabla}\bm{\varphi})\cdot 
			\bm{u}\, d\bm{x}&\\
			&\hspace{-1.5cm}=- \langle\, [\bm{u}]_S,\, 
			(\mathbb{C}\widehat{\nabla}\bm{\varphi})\bm{n}\rangle_{(H^{-\varepsilon}(D_{\eta}),\, 
			H^{\varepsilon}(D_{\eta}))}+\langle\, [(\mathbb{C}\widehat{\nabla}\bm{u})\bm{n}],\, \bm{\varphi}\,
			\rangle_{(H^{-1-\varepsilon}(D_{\eta}),\, H^{1+\varepsilon}(D_{\eta}))},
		\end{aligned}
	\end{equation}
where $[\cdot]_S$ denotes the jump on $S$.
Since $\bu$ is a solution of  \eqref{le: distr_sol} with source term \eqref{eq: source_term}, it follows that
	\begin{equation}\label{eq: equivalence4}
		\begin{aligned}
		 	\int\limits_{B^+_{\eta}(\overline{\bm{x}})}\textrm{div}(\mathbb{C}\widehat{\nabla}\bm{\varphi})\cdot 
		 	\bm{u}\, d\bm{x}&+	\int\limits_{B^-_{\eta}(\overline{\bm{x}})}\textrm{div}
		 	(\mathbb{C}\widehat{\nabla}\bm{\varphi})\cdot \bm{u}\, d\bm{x}		 	=\int\limits_{B_{\eta}
		 	(\overline{\bm{x}})}\textrm{div}(\mathbb{C}\widehat{\nabla}\bm{\varphi})\cdot \bm{u}\, d\bm{x}\\
		 	&=\langle \textrm{div}(\mathbb{C}\widehat{\nabla}\bm{u}),\, \bm{\varphi} 
		 	\rangle_{(H^{-{3}/{2}-\varepsilon}(B_{\eta}(\overline{\bm{x}})),\, H^{{3}/{2}+\varepsilon}(B_{\eta}
		 	(\overline{\bm{x}})))}\\
		 	&=-\langle \mathbb{C}(\bm{g}\otimes \bm{n})\delta_S,\, 
		 	\nabla\bm{\varphi}\rangle_{(H^{-{1}/{2}-\varepsilon}(B_{\eta}(\overline{\bm{x}})),\, 
		 	H^{{1}/{2}+\varepsilon}(B_{\eta}(\overline{\bm{x}})))}\\
		 	&=-\int\limits_{D_{\eta}}\nabla\bm{\varphi}:\mathbb{C}(\bm{g}\otimes\bm{n})\, d\sigma(\bm{x}).
		\end{aligned}
	\end{equation}
By the symmetries the tensor $\mathbb{C}$ satisfies, we also have that 
	\begin{equation*}
		\nabla\bm{\varphi}:\mathbb{C}
		(\bm{g}\otimes\bm{n})=\bm{g}\cdot(\mathbb{C}\widehat{\nabla}\bm{\varphi})\bm{n}.
	\end{equation*} 
Hence, equation \eqref{eq: equivalence4} becomes
	\begin{equation}\label{eq: equivalence5}
	\int\limits_{B_{\eta}(\overline{\bm{x}})}\textrm{div}(\mathbb{C}\widehat{\nabla}\bm{\varphi})\cdot \bm{u}\, 
	d\bm{x}=-\int\limits_{D_{\eta}}\bm{g}\cdot(\mathbb{C}\widehat{\nabla}\bm{\varphi})\bm{n}\, 
	d\sigma(\bm{x}).
	\end{equation}	
Comparing \eqref{eq: equivalence3} and \eqref{eq: equivalence5} we have that
	\begin{equation}
	\begin{aligned}
	-\int\limits_{D_{\eta}}\bm{g}\cdot(\mathbb{C}\widehat{\nabla}\bm{\varphi})\bm{n}\, d\sigma(\bm{x})&\\
	&\hspace{-1.5cm}=- 
	\langle\, [\bm{u}],\, (\mathbb{C}\widehat{\nabla}\bm{\varphi})\bm{n}\rangle_{(H^{-\varepsilon}(D_{\eta}),\, 
	H^{\varepsilon}(D_{\eta}))}+\langle\, [(\mathbb{C}\widehat{\nabla}\bm{u})\bm{n}],\, \bm{\varphi}\,
	\rangle_{(H^{-1-\varepsilon}(D_{\eta}),\, H^{1+\varepsilon}(D_{\eta}))},
	\end{aligned}
	\end{equation}
for any $\bm{\varphi} \in \cD(B_\eta(\Bar{\bm{x}}))$. By taking $\bm{\varphi}$ constant near $\Bar{\bm{x}}$, we conclude from the identity 
above that  $[(\mathbb{C}\widehat{\nabla}\bm{u})\bm{n}]=\bm{0}$ in $D_{\eta}$, and hence it follows 
$[(\mathbb{C}\widehat{\nabla}\bm{u})\bm{n}]=\bm{0}$ in $S$.  Then, $[\bm{u}]=\bm{g}$  in $S$.
We have shown that, if $\bu$ is a solution of \eqref{le: distr_sol} , it is also a very weak solution of \eqref{eq: Pu}. The converse implication follows by simply reversing all arguments in the proof.
\end{proof}

From Theorem \ref{th: very_weak_sol} and the previous lemma, we finally have the well-posedness of the direct problem in $\RR^3_-$.

\begin{corollary}
There exists a unique very weak solution $\bm{u}\in H^{{1}/{2}-\varepsilon}_{-{1}/{2}-\varepsilon}(\mathbb{R}^3_-)$ of the boundary-value/transmission  problem \eqref{eq: Pu}.
\end{corollary}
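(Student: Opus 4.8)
The plan is to obtain the statement as an immediate consequence of Theorem \ref{th: very_weak_sol} and Lemma \ref{lem: equivalence_two_problems}, so that no new analytic estimate is required. First I would recall that, by Proposition \ref{prop: sobolev_space_source_term}, the source term $\bm{f}_S$ in \eqref{eq: source_term} belongs to $H^{-3/2-\varepsilon}(\mathbb{R}^3_-)$ for every $\varepsilon>0$, and that, since it is supported on $\overline{S}$ with $\overline{S}\subset\mathbb{R}^3_-$ compact, it has compact support in $\mathbb{R}^3_-$. Hence Theorem \ref{th: very_weak_sol} applies and produces a unique distributional solution $\bm{u}\in H^{1/2-\varepsilon}_{-1/2-\varepsilon}(\mathbb{R}^3_-)$ of the source problem \eqref{le: distr_sol}.

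Next I would invoke Lemma \ref{lem: equivalence_two_problems}: a distribution $\bm{u}$ solves \eqref{le: distr_sol} with source \eqref{eq: source_term} if and only if it is a very weak solution of the boundary-value/transmission problem \eqref{eq: Pu}, the jump relations $[\bm{u}]_S=\bm{g}$ and $[(\mathbb{C}\widehat{\nabla}\bm{u})\bm{n}]_S=\bm{0}$ being understood in the weak sense made precise in the proof of that lemma (the two-sided traces of $\bm{u}$ live in $H^{-\varepsilon}(S)$ because $\bm{u}$ has local $H^{1/2-\varepsilon}$ regularity near $S$, while the tractions are interpreted in $H^{-1-\varepsilon}(S)$). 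This gives the existence of the very weak solution of \eqref{eq: Pu} in $H^{1/2-\varepsilon}_{-1/2-\varepsilon}(\mathbb{R}^3_-)$. For uniqueness, if $\bm{u}_1,\bm{u}_2\in H^{1/2-\varepsilon}_{-1/2-\varepsilon}(\mathbb{R}^3_-)$ are two very weak solutions of \eqref{eq: Pu}, then by the same equivalence each solves \eqref{le: distr_sol} with the identical source $\bm{f}_S$, so $\bm{u}_1-\bm{u}_2$ solves \eqref{le: distr_sol} with zero source, and the uniqueness part of Theorem \ref{th: very_weak_sol} forces $\bm{u}_1=\bm{u}_2$.

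The only point that needs attention is bookkeeping rather than estimation: one must make sure that the notion of very weak solution of \eqref{eq: Pu} that is asserted to be well posed is exactly the one characterized in Lemma \ref{lem: equivalence_two_problems}, and that membership in the weighted space $H^{1/2-\varepsilon}_{-1/2-\varepsilon}(\mathbb{R}^3_-)$ is the correct substitute for the classical decay/conditions at infinity. Once this is clear, the corollary follows with no further work, since all the substantive analysis has already been carried out in Theorems \ref{lem: weak_sol}, \ref{th: sol in H^2_1}, and \ref{th: well-posed in H^0_-1}, in the interpolation argument of Theorem \ref{th: very_weak_sol}, and in the jump computation of Lemma \ref{lem: equivalence_two_problems}. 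I do not expect any genuine obstacle here; the corollary is essentially a packaging statement combining the well-posedness of the source problem with the transmission/source equivalence.
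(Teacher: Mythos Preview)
Your proposal is correct and matches the paper's approach exactly: the corollary is stated immediately after Theorem \ref{th: very_weak_sol} and Lemma \ref{lem: equivalence_two_problems} as a direct consequence of the two, with no additional argument given. Your explicit spelling-out of the uniqueness step (passing from two transmission solutions back to two source solutions via the equivalence) is a reasonable elaboration of what the paper leaves implicit.
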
	

\section{The solution as a double layer potential}
In this section, we prove the existence of a Neumann function in the half-space, for an isotropic, non-homogeneous, elastic tensor satisfying \eqref{ass: c1,1 regularity} and \eqref{eq: strong convexity}. The Neumann function is utilized to give a representation of the solution $\bm{u}$ to Problem \eqref{eq: Pu} as a double layer potential.  

\subsection{The Neumann function}
In this subsection, we prove the existence 
of a distributional solution to the problem:
\begin{equation}\label{eq: p_neumann}
	\begin{cases}
		\textrm{\textup{div}}\,(\mathbb{C}(\bm{x})\widehat{\nabla}\bf N(\bm{x},\bm{y}))=\delta_{\bm{y}}
		(\bm{x})\bf I, & \textrm{in}\, \, \mathbb{R}^3_-,\\
		(\mathbb{C}(\bm{x})\widehat{\nabla}\bf N(\bm{x},\bm{y}))\bm{e}_3=\bm{0} ,& \textrm{on}\, \, \{x_3=0\},
	\end{cases}
\end{equation} 
where $\delta_{\bm{y}}(\cdot)$ is the Dirac distribution supported at $\bm{y}\in\mathbb{R}^3_-$.
To prove the existence of the Neumann function, we work column-wise and consider the system
\begin{equation}\label{eq: Neum_equ}
	\textrm{div}\,(\mathbb{C}(\bm{x})\widehat{\nabla}{\bm{N}}^{(k)}(\bm{x},\bm{y}))=\bm{e}_k 
	\delta_{\bm{y}}(\bm{x}),\qquad \textrm{in}\, \mathbb{R}^3_-,
\end{equation}
for $k=1,2,3$.	
We observe that, by freezing the coefficients of the elastic tensor at $\bm{x}=\bm{y}$, we formally have 
$\bm{e}_k\delta_{\bm{y}}(\bm{x})=\textrm{div}\,(\mathbb{C}
(\bm{y})\widehat{\nabla}\widetilde{\bm{N}}^{(k)}(\bm{x},\bm{y}))$, where $\widetilde{\bm{N}}^{(k)}$ is 
the $k$-th column vector of the Neumann function for the Lam\'e system with constant coefficients.  Therefore,  
subtracting $\textrm{div}(\mathbb{C}(\bm{x})\widehat{\nabla}\widetilde{\bm{N}}^{(k)})$ from both sides in 
Equation \eqref{eq: Neum_equ} gives:
\begin{equation}\label{eq: eq_M}
	\textrm{div}\,(\mathbb{C}(\bm{x})\widehat{\nabla}\bm{M}^{(k)}(\bm{x},
	\bm{y}))=-\textrm{div}\left[(\mathbb{C}(\bm{x})-\mathbb{C}
	(\bm{y}))\widehat{\nabla}\widetilde{\bm{N}}^{(k)}(\bm{x},\bm{y})\right],\qquad \textrm{in}\,
	\mathbb{R}^3_-,
\end{equation} 
where $\bm{M}^{(k)}:=\bm{N}^{(k)}-\widetilde{\bm{N}}^{(k)}$, for $k=1,2,3$. We next recall the 
decay estimates satisfied by the Neumann function $\mathbf{\widetilde{N}}$ in the case of constant coefficients. In 
particular,  it is not difficult to see from Theorem 4.9 in \cite{Aspri-Beretta-Rosset} that there exists a positive 
constant $C=C( \alpha_0,\beta_0,M)$, such that for all $\bm{x},\bm{y}\in \mathbb{R}^3_-$ with $\bm{x}\neq 
\bm{y}$,
\begin{equation}\label{eq: decay_cond_Neumann}
	\begin{aligned}
		|\mathbf{\widetilde{N}}(\bm{x},\bm{y})|&\leq C |\bm{x}-\bm{y}|^{-1},\\
		|\nabla_{\bm{x}}\mathbf{\widetilde{N}}(\bm{x},\bm{y})|&\leq C|\bm{x}-\bm{y}|^{-2}.
	\end{aligned}
\end{equation}
We recall that $\alpha_0$, $\beta_0$, $M$ are the constants appearing in the assumptions on the elasticity tensor 
$\CC$ in Subsection \ref{sec:main_assumptions}.

Next, we establish rigorously the existence of the Neumann function $\mathbf{N}$, by showing that there exists a unique  variational solution $\mathbf{M}$ for the vector problem \eqref{eq: eq_M} in $H^1_0(\RR^3_-)$.
This result also implies that, as expected, the singularities of $\mathbf{N}(\bx,\by)$ near $\by$  are those of the constant-coefficient Neumann function obtained by freezing the coefficient at $\by$. This fact will be used in Subsection \ref{sec:representation}. 

\begin{proposition}\label{lem: F in L2}
 Assume that  \eqref{ass: c1,1 regularity} holds, and let 
	\begin{equation}\label{eq: beh_Neum_func}
		\mathbf{F}_{\bm{y},k}(\bm{x}):=[(\mathbb{C}(\bm{x})-\mathbb{C}
		(\bm{y}))\widehat{\nabla}\widetilde{\bm{N}}^{(k)}(\bm{x},\bm{y})], \quad k=1,2,3.
	\end{equation}
Then $\mathbf{F}_{\bm{y},k}\in L^2(\mathbb{R}^3_-)$ for any $\bm{y}\in\mathbb{R}^3_-$.
Moreover, for any $\bm{y}\in\mathbb{R}^3_-$ the boundary value problem 
\begin{equation}\label{eq: prob_M}
		\begin{cases}
			\textup{\textrm{div}}\,(\mathbb{C}(\bm{x})\widehat{\nabla}\bm{M}^{(k)}(\bm{x},
			\bm{y}))=-\textup{\textrm{div}}\,\mathbf{F}_{\bm{y},k}, & \textup{\textrm{in}}\,
			\mathbb{R}^3_,\\
			\mathbb{C}(\bm{x})\widehat{\nabla}\bm{M}^{(k)}\bm{e_3}=-\mathbb{C}
			(\bm{x})\widehat{\nabla}\widetilde{\bm{N}}^{(k)}\bm{e}_3,& \textup{\textrm{on}}\, \{x_3=0\}, 	
		\end{cases}
\end{equation}
admits a unique solution, satisfying
\begin{equation}\label{eq: stab_est_M^k}
		\|\bm{M}^{(k)}\|_{H^{1}_0(\mathbb{R}^3_-)}\leq C \|\mathbf{F}_{\bm{y},k}\|
		_{L^2(\mathbb{R}^3_-)}.
\end{equation} 
In particular the matrix $\mathbf{M}=[\bm{M}^{(1)}\, 
\bm{M}^{(2)}\, \bm{M}^{(3)}]$ belongs to $H^{1}_0(\mathbb{R}^3)$.
\end{proposition}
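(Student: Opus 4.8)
The plan is to prove the statement in three stages: the $L^2$-bound on $\mathbf{F}_{\bm{y},k}$, the variational well-posedness of the boundary-value problem \eqref{eq: prob_M}, and finally the assembly of the three columns into the matrix $\mathbf{M}$. First I would fix $\bm{y}\in\mathbb{R}^3_-$ and estimate $\mathbf{F}_{\bm{y},k}$ pointwise by combining the two available pieces of information: from the Lipschitz regularity \eqref{ass: c1,1 regularity}, $|\mathbb{C}(\bm{x})-\mathbb{C}(\bm{y})|\leq C|\bm{x}-\bm{y}|$ for $|\bm{x}-\bm{y}|\leq 1$ and $|\mathbb{C}(\bm{x})-\mathbb{C}(\bm{y})|\leq 2M$ globally; and from the decay estimate \eqref{eq: decay_cond_Neumann} for the frozen-coefficient Neumann function, $|\widehat{\nabla}_{\bm{x}}\widetilde{\bm{N}}^{(k)}(\bm{x},\bm{y})|\leq C|\bm{x}-\bm{y}|^{-2}$. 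This yields $|\mathbf{F}_{\bm{y},k}(\bm{x})|\leq C|\bm{x}-\bm{y}|^{-1}$ for $|\bm{x}-\bm{y}|\leq 1$ and $|\mathbf{F}_{\bm{y},k}(\bm{x})|\leq C|\bm{x}-\bm{y}|^{-2}$ for $|\bm{x}-\bm{y}|>1$. Since the bound $|\bm{x}-\bm{y}|^{-1}$ is square-integrable near $\bm{y}$ and the bound $|\bm{x}-\bm{y}|^{-2}$ is square-integrable at infinity, both in $\mathbb{R}^3$, integrating in polar coordinates centered at $\bm{y}$ shows $\mathbf{F}_{\bm{y},k}\in L^2(\mathbb{R}^3_-)$ with a bound depending only on $\alpha_0,\beta_0,M$ (only finiteness is needed here).

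Next I would reduce \eqref{eq: prob_M} to a coercive variational problem on $H^1_0(\mathbb{R}^3_-)$. The key observation is that the inhomogeneous Neumann datum for $\bm{M}^{(k)}$ is exactly the normal part of $\mathbf{F}_{\bm{y},k}$ on $\{x_3=0\}$: since $\widetilde{\bm{N}}^{(k)}$ is the $k$-th column of the Neumann function of the \emph{constant}-coefficient tensor $\mathbb{C}(\bm{y})$, one has $(\mathbb{C}(\bm{y})\widehat{\nabla}\widetilde{\bm{N}}^{(k)})\bm{e}_3=\bm{0}$ on $\{x_3=0\}$, so there $\mathbf{F}_{\bm{y},k}\bm{e}_3=(\mathbb{C}(\bm{x})-\mathbb{C}(\bm{y}))\widehat{\nabla}\widetilde{\bm{N}}^{(k)}\bm{e}_3=\mathbb{C}(\bm{x})\widehat{\nabla}\widetilde{\bm{N}}^{(k)}\bm{e}_3$, which is precisely $-(\mathbb{C}(\bm{x})\widehat{\nabla}\bm{M}^{(k)})\bm{e}_3$ by the boundary condition in \eqref{eq: prob_M}. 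Therefore, rewriting the equation as $\textrm{div}(\mathbb{C}\widehat{\nabla}\bm{M}^{(k)}+\mathbf{F}_{\bm{y},k})=\bm{0}$, testing against $\bm{v}\in H^1_0(\mathbb{R}^3_-)$ and integrating by parts once, all boundary terms cancel, and \eqref{eq: prob_M} becomes equivalent to: find $\bm{M}^{(k)}\in H^1_0(\mathbb{R}^3_-)$ such that
\[
   a(\bm{M}^{(k)},\bm{v})=-\int_{\mathbb{R}^3_-}\mathbf{F}_{\bm{y},k}:\nabla\bm{v}\,d\bm{x},\qquad \forall\,\bm{v}\in H^1_0(\mathbb{R}^3_-),
\]
where $a$ is the bilinear form \eqref{eq: bil_form_a}; on the left I use that $\mathbb{C}\widehat{\nabla}\bm{M}^{(k)}$ is symmetric, so contracting with $\nabla\bm{v}$ or with $\widehat{\nabla}\bm{v}$ gives the same value.

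Finally I would invoke the Lax--Milgram theorem exactly as in the proof of Theorem \ref{lem: weak_sol}: the form $a$ is continuous and, by the strong convexity \eqref{eq: strong convexity} together with the weighted Korn inequality \eqref{eq: Korn}, coercive on $H^1_0(\mathbb{R}^3_-)$; by the first step the right-hand side defines a bounded linear functional on $H^1_0(\mathbb{R}^3_-)$ of norm at most $\|\mathbf{F}_{\bm{y},k}\|_{L^2(\mathbb{R}^3_-)}$ (using $\|\nabla\bm{v}\|_{L^2}\leq\|\bm{v}\|_{H^1_0}$). This produces a unique $\bm{M}^{(k)}\in H^1_0(\mathbb{R}^3_-)$ satisfying the estimate \eqref{eq: stab_est_M^k}; collecting the three columns yields $\mathbf{M}=[\bm{M}^{(1)}\ \bm{M}^{(2)}\ \bm{M}^{(3)}]\in H^1_0(\mathbb{R}^3_-)$.

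The hard part is really the reformulation in the second step — recognizing that the inhomogeneous traction datum of $\bm{M}^{(k)}$ and the normal trace of the source $\mathbf{F}_{\bm{y},k}$ match up so that the problem collapses to the homogeneous-Neumann setting already treated in Theorem \ref{lem: weak_sol}. A secondary, routine point to address is the justification of the integration by parts and the weak meaning of the normal traces: once $\bm{M}^{(k)}$ solves the PDE in $\mathcal{D}'(\mathbb{R}^3_-)$, the field $\mathbb{C}\widehat{\nabla}\bm{M}^{(k)}+\mathbf{F}_{\bm{y},k}$ has $L^2$ entries and vanishing distributional divergence, so its normal trace on $\{x_3=0\}$ is well-defined in $H^{-1/2}$ locally, and the displayed variational identity is indeed equivalent to \eqref{eq: prob_M}.
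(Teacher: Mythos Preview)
Your proposal is correct and follows essentially the same approach as the paper: you split the $L^2$ estimate on $\mathbf{F}_{\bm{y},k}$ into near and far regions using the Lipschitz bound and the decay of $\widehat{\nabla}\widetilde{\bm{N}}^{(k)}$, then recognize that the inhomogeneous traction condition equals $\mathbf{F}_{\bm{y},k}\bm{e}_3$ (via $(\mathbb{C}(\bm{y})\widehat{\nabla}\widetilde{\bm{N}}^{(k)})\bm{e}_3=\bm{0}$) so that \eqref{eq: prob_M} rewrites as a homogeneous-Neumann problem for $\mathbb{C}\widehat{\nabla}\bm{M}^{(k)}+\mathbf{F}_{\bm{y},k}$, and conclude by Lax--Milgram with the bilinear form $a$ of Theorem~\ref{lem: weak_sol}. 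Your remarks on the weak meaning of the normal trace go slightly beyond what the paper spells out, but they are consistent with its argument.
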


\begin{proof}
We start by showing that $\mathbf{F}_{\bm{y},k}\in L^2(\mathbb{R}^3_-)$.  We
choose $r$ sufficiently small so that $B_{r}(\bm{y})\subset \mathbb{R}^3_-$.
From the regularity assumption on the elasticity tensor \eqref{ass: c1,1 regularity}, it follows that there exists  a 
positive constant $C$ independent of $\by$ and $r$ such that, for $\bm{x}\in B_{r}(\bm{y})$,
	\begin{equation*}
		|\mathbb{C}(\bm{x})-\mathbb{C}(\bm{y})|\leq C |\bm{x}-\bm{y}|.
	\end{equation*}
By \eqref{eq: decay_cond_Neumann} then,
	\begin{equation*}
		\int\limits_{B_{r}(\bm{y})}|[\mathbb{C}(\bm{x})-\mathbb{C}
		(\bm{y})]\widehat{\nabla}\widetilde{\bm{N}}^{(k)}|^2\, d\bm{x}\leq C \int\limits_{B_{r}
		(\bm{y})}\frac{1}{|\bm{x}-\bm{y}|^2}\, d\bm{x}<\infty.
	\end{equation*}	
On the other hand,  \eqref{ass: c1,1 regularity} also implies the elastic parameters are uniformly bounded and, 
again by \eqref{eq: decay_cond_Neumann}, we find that 
	\begin{equation*}
		\int\limits_{B^C_{r}(\bm{y})\cap\mathbb{R}^3_-}|[\mathbb{C}(\bm{x})-\mathbb{C}
		(\bm{y})]\widehat{\nabla}\widetilde{\bm{N}}^{(k)}|^2\, d\bm{x}\leq C \int\limits_{B^C_{r}
		(\bm{y})\cap\mathbb{R}^3_-}\frac{1}{|\bm{x}-\bm{y}|^4}\, d\bm{x}<\infty,
	\end{equation*}	 
where $B^C_{r}(\bm{y})$ is the complementary set of $B_{r}(\bm{y})$. Combining these two results gives that 
$\mathbf{F}_{\bm{y},k}(\bm{x})\in L^2(\mathbb{R}^3_-)$ for any $\bm{y}\in\mathbb{R}^3_-$.

Recalling that
	\begin{equation*}
		(\mathbb{C}(\bm{y})\widehat{\nabla}\bf \widetilde{N}(\bm{x},\bm{y}))\bm{e}_3=\bm{0} ,\,\, 
		\textrm{on}\, \, \{x_3=0\},\,
	\end{equation*}
we can rewrite (\ref{eq: prob_M}) in the equivalent form 
\begin{equation}\label{eq: equivprob_M_new}
	\begin{cases}
		\textup{\textrm{div}}\,(\mathbb{C}(\bm{x})\widehat{\nabla}\bm{M}^{(k)}(\bm{x},
		\bm{y})+\mathbf{F}_{\bm{y},k})=\bm{0}, & \textup{\textrm{in}}\,\mathbb{R}^3_-,\\
		(\mathbb{C}(\bm{x})\widehat{\nabla}\bm{M}^{(k)}+\mathbf{F}_{\bm{y},k})\, \bm{e_3}=\bm{0},& 
		\textup{\textrm{on}}\, \{x_3=0\}.
		\end{cases}
	\end{equation}
Proceeding as in the first step of the  proof of Theorem \ref{th: sol in H^2_1} (cf. Problem \ref{eq: bvp_rho_partial_u_new}), the existence and regularity of $\bm{M}^{(k)}$ as the unique solution of \eqref{eq: prob_M} follows from the well-posedness of the variational formulation of  \eqref{eq: equivprob_M_new}, i.e., find $\bm{w} \in H^1_0(\RR^3_-)$ such that
\[
      a(\bm{w},\bm{v}) = G_k(\bm{v}), \qquad \forall \bm{v}\in H^1_0(\RR^3_-), 
\]
where the functional
	\begin{equation}\label{eq: funct}
		G_k(\bm{v})=-\int\limits_{\mathbb{R}^3_-}(\mathbb{C}(\bm{x})-\mathbb{C}
		(\bm{y}))\widehat{\nabla}\widetilde{\bm{N}}^{(k)}(\bm{x},\bm{y}):\widehat{\nabla}\bm{v}(\bm{x})\, 
		d\bm{x}.
	\end{equation}	
By Lax-Milgram, it is enough to show that $G_k$ is continuous on $H^1_0(\RR^3_-)$. Since $\mathbf{F}_{\by,k}$ was shown to belong to $L^2(\RR^3_-)$, we  have that
	\begin{equation*}
		\begin{aligned}
			|G_k(\bm{v})|&\leq \int\limits_{\mathbb{R}^3_-}|\mathbf{F}_{\bm{y},k}:\widehat{\nabla}\bm{v}|\, 
			d\bm{x}\leq \|\mathbf{F}_{\bm{y},k}\|_{L^2(\mathbb{R}^3_-)}\|\widehat{\nabla}\bm{v}\|
			_{L^2(\mathbb{R}^3_-)}\leq \|\mathbf{F}_{\bm{y},k}\|_{L^2(\mathbb{R}^3_-)} \|\bm{v}\|
			_{H^{1}_0(\mathbb{R}^3_-)}. \\
		\end{aligned}
	\end{equation*}	
\end{proof}

\begin{remark}
If $\textup{\textrm{dist}}(\bm{y},\{x_3=0\})\geq d_0>0$,  then from \eqref{eq: stab_est_M^k} it follows that
	\begin{equation*}
		\|\bm{M}^{(k)}\|_{H^{1}_0(\mathbb{R}^3_-)}\leq C
	\end{equation*}
where $C$ does not depend on $\bm{y}\in\mathbb{R}^3_-$.	 
\end{remark}

In the next section, to provide an integral representation formula of the solution $\bm{u}$ of Problem \eqref{eq: Pu} as a double layer potential, we need to prove higher regularity than $H^1_0$ on the Neumann function once we are sufficiently far from the singularity $\bm{y}$. 

\begin{proposition}\label{prop: N_dag}
For any $r>0$ such that $B_{r}(\bm{y})\subset\mathbb{R}^3_-$,  we have that
	\begin{equation*}
		\mathbf{N}\in H^2_1(\mathbb{R}^3_-\setminus \overline{B_{r}(\bm{y})}).
	\end{equation*}
\end{proposition}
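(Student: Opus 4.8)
The plan is to write $\mathbf{N}=\widetilde{\mathbf{N}}+\mathbf{M}$, where $\widetilde{\mathbf{N}}$ is the Neumann function for the \emph{constant}-coefficient Lam\'e operator $\mathrm{div}(\CC(\by)\widehat{\nabla}\,\cdot\,)$ in the half-space with traction-free condition on $\{x_3=0\}$, and $\mathbf{M}=[\bm{M}^{(1)}\ \bm{M}^{(2)}\ \bm{M}^{(3)}]$ is the field produced in Proposition~\ref{lem: F in L2}, with $\bm{M}^{(k)}\in H^1_0(\RR^3_-)$; I will show that each summand lies in $H^2_1(\RR^3_-\setminus\overline{B_r(\by)})$. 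For $\widetilde{\mathbf{N}}$ this is essentially explicit: away from its pole $\by$, $\widetilde{\mathbf{N}}(\cdot,\by)$ is a smooth (up to the boundary) solution of a homogeneous constant-coefficient Lam\'e system with vanishing traction, so interior and boundary elliptic estimates for this system, combined with \eqref{eq: decay_cond_Neumann}, give $|\nabla^j_{\bx}\widetilde{\mathbf{N}}(\bx,\by)|\le C_j|\bx-\by|^{-1-j}$ for $j=0,1,2$; on $\RR^3_-\setminus\overline{B_r(\by)}$ these quantities are bounded near $\by$ and decay like $|\bx|^{-1-j}$ at infinity, so $\varrho^{-1}\widetilde{\mathbf{N}}$, $\nabla\widetilde{\mathbf{N}}$ and $\varrho\nabla^2\widetilde{\mathbf{N}}$ are square-integrable there (each integrand behaves like $|\bx|^{-4}$), i.e. $\widetilde{\mathbf{N}}\in H^2_1(\RR^3_-\setminus\overline{B_r(\by)})$.

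For $\mathbf{M}$ I localize away from the singularity. Fix $r'\in(0,r)$ and a cut-off $\chi\in C^\infty(\RR^3)$ with $\chi\equiv0$ on $B_{r'}(\by)$, $\chi\equiv1$ on $\RR^3\setminus B_r(\by)$ and $0\le\chi\le1$, so that $\nabla\chi,\nabla^2\chi$ are supported in the compact annulus $A:=\overline{B_r(\by)}\setminus B_{r'}(\by)\subset\RR^3_-$, which does not meet $\{x_3=0\}$. Multiplication by such a $\chi$ maps $H^1_0(\RR^3_-)$ into itself, so $\chi\bm{M}^{(k)}\in H^1_0(\RR^3_-)$. Writing $\mathbf{G}_k:=\CC\widehat{\nabla}\bm{M}^{(k)}+\mathbf{F}_{\by,k}$, recall from the equivalent formulation \eqref{eq: equivprob_M_new} that $\mathrm{div}\,\mathbf{G}_k=\bm{0}$ in $\RR^3_-$ and $\mathbf{G}_k\bm{e}_3=\bm{0}$ on $\{x_3=0\}$; a commutator computation identical to the one in the first step of the proof of Theorem~\ref{th: sol in H^2_1} then yields
\[
   \mathrm{div}\bigl(\CC\widehat{\nabla}(\chi\bm{M}^{(k)})+\chi\mathbf{F}_{\by,k}\bigr)=\mathbf{G}_k\nabla\chi+\mathrm{div}\bigl(\CC\,\widehat{(\nabla\chi\otimes\bm{M}^{(k)})}\bigr)=:\bm{h}_k \quad\text{in }\RR^3_-,
\]
while on $\{x_3=0\}$, where $\chi\equiv1$ and $\nabla\chi\equiv0$, one has $\bigl(\CC\widehat{\nabla}(\chi\bm{M}^{(k)})+\chi\mathbf{F}_{\by,k}\bigr)\bm{e}_3=\mathbf{G}_k\bm{e}_3=\bm{0}$. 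Hence $\chi\bm{M}^{(k)}$ solves a boundary value problem of exactly the type treated in the proof of Theorem~\ref{th: sol in H^2_1} (compare \eqref{eq: bvp_rho_partial_u_new}), with source $\bm{h}_k$ and divergence-form correction $\chi\mathbf{F}_{\by,k}$.

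It remains to check that these data have the regularity required by that argument. The field $\bm{h}_k$ is supported in $A$; since $\bm{M}^{(k)},\nabla\bm{M}^{(k)}\in L^2_{loc}(\RR^3_-)$, $\mathbf{G}_k\in L^2(A)$ and $\CC\in C^{0,1}$, it lies in $L^2(\RR^3_-)$ with compact support, hence in $H^0_1(\RR^3_-)\subset(H^1_0)'(\RR^3_-)$. And $\chi\mathbf{F}_{\by,k}\in H^1_1(\RR^3_-)$: on $\mathrm{supp}\,\chi$ (which is away from $\by$) the field $\mathbf{F}_{\by,k}=(\CC(\bx)-\CC(\by))\widehat{\nabla}\widetilde{\bm{N}}^{(k)}$ is Lipschitz, and from $|\widehat{\nabla}\widetilde{\bm{N}}^{(k)}|\le C|\bx-\by|^{-2}$, $|\nabla^2\widetilde{\bm{N}}^{(k)}|\le C|\bx-\by|^{-3}$ and the decay $|\nabla\CC|\le C/\varrho$ of Assumption~\eqref{eq: decay_cond_lame}, one checks that $\varrho^{-1}(\chi\mathbf{F}_{\by,k})$ and $\varrho\,\nabla(\chi\mathbf{F}_{\by,k})$ are square-integrable on $\RR^3_-$; by \eqref{map: continuity} it follows that $\mathrm{div}(\chi\mathbf{F}_{\by,k})\in H^0_1(\RR^3_-)$ as well.

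With these facts in hand, the argument of Theorem~\ref{th: sol in H^2_1} applies to $\chi\bm{M}^{(k)}$ word for word: the first step (Lax--Milgram for the tangential derivatives $\varrho\partial_i(\chi\bm{M}^{(k)})$, $i=1,2$, which again solve a problem of the same form) followed by the second step (reading $\partial_3^2(\chi\bm{M}^{(k)})$ off the explicit Lam\'e equation) gives $\chi\bm{M}^{(k)}\in H^2_1(\RR^3_-)$. Since $\chi\equiv1$ on $\RR^3_-\setminus\overline{B_r(\by)}$, this yields $\bm{M}^{(k)}\in H^2_1(\RR^3_-\setminus\overline{B_r(\by)})$, and together with the first paragraph, $\mathbf{N}\in H^2_1(\RR^3_-\setminus\overline{B_r(\by)})$. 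The step I expect to require the most care is controlling the behaviour at infinity: one must verify that the cut-off and the commutator terms do not spoil the weight $\alpha=1$, and here the decay hypothesis \eqref{eq: decay_cond_lame} on $\nabla\CC$ --- not merely its boundedness --- is precisely what places $\chi\mathbf{F}_{\by,k}$, and hence the source of the localized problem, in the correct weighted spaces; near $\by$, by contrast, $\mathbf{F}_{\by,k}$ is only in $L^2$, which is exactly the reason the ball $\overline{B_r(\by)}$ must be excised.
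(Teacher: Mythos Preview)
Your argument is correct, but it takes a noticeably longer road than the paper's. You split $\mathbf{N}=\widetilde{\mathbf{N}}+\mathbf{M}$, treat $\widetilde{\mathbf{N}}$ by explicit pointwise decay (which needs the second-derivative bound $|\nabla^2\widetilde{\mathbf{N}}|\le C|\bx-\by|^{-3}$, not stated in the paper though standard), and then localize $\mathbf{M}$; the localized field $\chi\bm{M}^{(k)}$ inherits the non-compactly supported divergence-form term $\chi\mathbf{F}_{\by,k}$ and an inhomogeneous traction condition, so Theorem~\ref{th: sol in H^2_1} does not apply directly and you must redo its two-step bootstrapping while tracking the extra $\chi\mathbf{F}_{\by,k}$ contributions (this is where you need $\varrho\,\nabla(\chi\mathbf{F}_{\by,k})\in L^2$; incidentally, for $\chi\mathbf{F}_{\by,k}\in H^1_1$ the zero-order requirement is $\chi\mathbf{F}_{\by,k}\in L^2$, not $\varrho^{-1}\chi\mathbf{F}_{\by,k}\in L^2$, though only the first-order part matters for your conclusion).

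The paper instead cuts off $\mathbf{N}$ itself: setting $\bm{N}^{(k)}_\dag=(1-\varphi)\bm{N}^{(k)}$ with $\varphi\in\mathcal{D}(\RR^3_-)$ equal to $1$ near $\by$, one gets a problem with \emph{homogeneous} traction (since $\bm{N}^{(k)}$ already satisfies it and $\varphi$ vanishes near $\{x_3=0\}$) and a source $\bm{h}$ that is compactly supported in the annulus $\overline{B_r(\by)}\setminus B_{r/2}(\by)$, hence automatically in $H^0_1(\RR^3_-)$ once one knows $\bm{N}^{(k)}\in H^1$ on that annulus. Theorem~\ref{th: sol in H^2_1} then applies verbatim, with no $\mathbf{G}$-type correction and no analysis of $\mathbf{F}_{\by,k}$ at infinity. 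Your route has the conceptual merit of separating the explicit singular part $\widetilde{\mathbf{N}}$ from the remainder, but the paper's choice of cutting off $\mathbf{N}$ rather than $\mathbf{M}$ collapses the whole argument to a single invocation of the strong-solution theorem.
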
 

\begin{proof}
We fix $r>0$ such that $B_{r}(\bm{y})\subset \mathbb{R}^3_-$, and define a cut-off function $\varphi\in \mathcal{D}(\mathbb{R}^3_-)$ with the property that
	\begin{equation*}
	\varphi=
		\begin{cases}
			1, & \textrm{in}\, \overline{B_{{r}/{2}}(\bm{y})},\\
			0, & \textrm{in}\, \mathbb{R}^3_-\setminus B_{r}(\bm{y}).
		\end{cases}
	\end{equation*}
We also let
	\begin{equation}\label{eq: N_dag}
		\bm{N}^{(k)}_{\dag}:=(1-\varphi)\bm{N}^{(k)}=
			\begin{cases}
				\bm{0}, & \textrm{in}\, \overline{B_{{r}/{2}}(\bm{y})}, \\
				\bm{N}^{(k)}, & \textrm{in}\, \mathbb{R}^3_-\setminus B_{r}(\bm{y}),
			\end{cases}
	\end{equation}
for $k=1,2,3$.	
From the definition of $\varphi$ and the fact that $\bm{N}^{(k)}$ solves the homogeneous equation $\textrm{div}(\mathbb{C}\widehat{\nabla}\bm{N}^{(k)})=\bm{0}$ for $\bm{x}\neq \bm{y}$, it is straightforward to find the equation solved by $\bm{N}^{(k)}_{\dag}$, that is
	\begin{equation*}
		\textrm{div}(\mathbb{C}\widehat{\nabla}\bm{N}^{(k)}_{\dag})=-\textrm{div}(\mathbb{C}(\widehat{\nabla\varphi \otimes \bm{N}^{(k)}}))-(\mathbb{C}\widehat{\nabla}\bm{N}^{(k)})\nabla{\varphi},	
	\end{equation*}
with homogeneous Neumann boundary conditions.
We observe that the source term
	\begin{equation*}
		\bm{h}:= -\textrm{div}(\mathbb{C}(\widehat{\nabla\varphi \otimes \bm{N}^{(k)}}))-(\mathbb{C}\widehat{\nabla}\bm{N}^{(k)})\nabla{\varphi}
	\end{equation*}	
has compact support in $\overline{B_{r}(\bm{y})}\setminus B_{r/2}(\bm{y})$ and, moreover, $\bm{N}^{(k)}\in H^1(B_{r}(\bm{y})\setminus \overline{B_{r/2}(\bm{y})})$, which follows from the result in Proposition \ref{lem: F in L2} and the representation  $\bm{N}^{(k)}=\bm{M}^{(k)}-\widetilde{\bm{N}}^{(k)}$. Therefore, $\bm{h}\in L^2(B_{r}(\bm{y})\setminus \overline{B_{r/2}(\bm{y})})$ and, since $\bm{h}$ has compact support, $\bm{h}\in H^0_1(\mathbb{R}^3_-)$ as well.
We then consider the problem: 
	\begin{equation*}
		\begin{cases}
			\textrm{div}(\mathbb{C}\widehat{\nabla}\bm{N}^{(k)}_{\dag})=\bm{h}, & \textrm{in}\, \mathbb{R}^3_-,\\
			(\mathbb{C}\widehat{\nabla}\bm{N}^{(k)}_{\dag})\bm{e}_3=\bm{0}, & \textrm{on}\, \{x_3=0\}
		\end{cases}
	\end{equation*} 
for given  $\bm{h}\in H^0_1(\mathbb{R}^3_-)$.
Following the steps in the proof of Theorem \ref{lem: weak_sol} and Theorem \ref{th: sol in H^2_1}, one can prove that there exists a unique $\bm{N}^{(k)}_{\dag}\in H^2_1(\mathbb{R}^3_-)$, that is,  $\bm{N}^{(k)}\in H^2_1(\mathbb{R}^3_-\setminus \overline{B_{r}(\bm{y})})$ from \eqref{eq: N_dag},  for $k=1,2,3$.  		
\end{proof}

\subsection{A representation formula for the solution to  \eqref{eq: Pu} } \label{sec:representation}

In this subsection, using the Neumann function defined in \eqref{eq: p_neumann}, we give an integral representation  formula for the solution to Problem \eqref{eq: Pu}. 
Then we take advantage of this integral representation to study the regularity of the solution in the complement of 
the dislocation surface $S$ in $\RR^3_-$. In fact, we will determine  the singularities of the solution, when  $S$ is a rectangular 
dislocation surface parallel to the plane $\{x_3=0\}$ and $\bm{g}$ is a constant vector, in the special case that the medium is 
homogeneous.

This explicit example shows that, if $\bm{g}\in H^{1/2}(S)$, but without assuming that $\bg$ has compact  support 
in $S$, generically  solutions to \eqref{eq: Pu}  are not in  $H^1_0(\mathbb{R}^3_-\setminus \overline{S})$. 
We begin with a preliminary result proved by following an approach similar to that in \cite{ColliFranzone-Guerri-Magenes} and using also results in \cite{Fuchs84} on growth properties of Neumann functions in a neighborhood of the singularity.

\begin{proposition}
The unique solution to \eqref{eq: Pu} can be represented as a double layer potential on $S$, that is,
		\begin{equation}\label{eq: repr_u_var_coeff}
			\bm{u}(\bm{y})=-\int\limits_{S}\left[(\mathbb{C}(\bm{x})\widehat{\nabla}_{\bm{x}}\bf N (\bm{x},
			\bm{y})) \bm{n}(\bm{x})\right]^T \bm{g}(\bm{x})\, d\sigma(\bm{x}),
		\end{equation}
where $\mathbf{N}$ is the Neumann function satisfying \eqref{eq: p_neumann}.
\end{proposition}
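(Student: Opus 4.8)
The plan is to obtain \eqref{eq: repr_u_var_coeff} by a regularization argument at the level of the source, reducing the identity to the classical reproducing property of the Neumann function for smooth data and then passing to the limit. The obstruction to a direct proof is that the solution $\bm{u}$ lives only in $H^{1/2-\varepsilon}_{-1/2-\varepsilon}(\mathbb{R}^3_-)$, so a naive Green/Betti identity against $\mathbf{N}(\cdot,\bm{y})$ is not available; instead one exploits the stability of the very weak solution (Theorem \ref{th: very_weak_sol}) together with the growth bounds \eqref{eq: decay_cond_Neumann} and the decomposition $\mathbf{N}=\widetilde{\mathbf{N}}+\mathbf{M}$ of Proposition \ref{lem: F in L2}. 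Throughout, fix $\bm{y}\in\mathbb{R}^3_-\setminus\overline{S}$ and $\rho>0$ with $\overline{B_\rho(\bm{y})}\cap\overline{S}=\emptyset$; it suffices to prove the formula for such $\bm{y}$, since $\overline{S}$ is negligible.

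First I would set up the regularization. Since $\mathbf{\Psi}:=\mathbb{C}(\bm{g}\otimes\bm{n})\in L^2(S)$ is a symmetric tensor, put $\mathbf{\Psi}_m:=(\mathbf{\Psi}\delta_S)*\phi_{1/m}$ with $\phi_{1/m}$ a standard mollifier, and $\bm{f}_m:=\textrm{div}\,\mathbf{\Psi}_m=\bm{f}_S*\phi_{1/m}$. Then $\mathbf{\Psi}_m$ is smooth, symmetric, supported in the $1/m$-neighbourhood of $\overline S$; moreover $\bm{f}_m\in H^0_1(\mathbb{R}^3_-)$ with supports contained, for $m$ large, in a fixed compact $\overline\Omega\subset\mathbb{R}^3_-$ disjoint from $B_\rho(\bm{y})$, and $\bm{f}_m\to\bm{f}_S$ in $H^{-3/2-\varepsilon}_{-1/2-\varepsilon}(\mathbb{R}^3_-)$. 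By Theorem \ref{th: sol in H^2_1} the problem \eqref{le: distr_sol} with source $\bm{f}_m$ has a unique strong solution $\bm{u}_m\in H^2_1(\mathbb{R}^3_-)$, which by uniqueness agrees with the very weak solution $\Phi(\bm{f}_m)$; hence Theorem \ref{th: very_weak_sol} gives $\bm{u}_m\to\bm{u}$ in $H^{1/2-\varepsilon}_{-1/2-\varepsilon}(\mathbb{R}^3_-)$. On $B_\rho(\bm{y})$ each $\bm{u}_m$ and $\bm{u}$ solves the homogeneous Lam\'e system, so interior elliptic regularity (valid for systems with Lipschitz coefficients) upgrades the $L^2$-convergence on $B_\rho(\bm{y})$ to locally uniform convergence near $\bm{y}$; in particular $\bm{u}_m(\bm{y})\to\bm{u}(\bm{y})$.

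Next I would establish, for each fixed $m$, the reproducing identity $u_{m,k}(\bm{y})=\int_{\mathbb{R}^3_-}\bm{f}_m(\bm{x})\cdot\bm{N}^{(k)}(\bm{x},\bm{y})\,d\bm{x}$, $k=1,2,3$. This comes from the second Green (Betti) identity applied on $\mathbb{R}^3_-\setminus\overline{B_\delta(\bm{y})}$ to $\bm{u}_m\in H^2_1$ and $\bm{N}^{(k)}(\cdot,\bm{y})$ (both $H^2_1$ there, by Proposition \ref{prop: N_dag}): the boundary term on $\{x_3=0\}$ vanishes by the traction-free conditions, the terms at infinity vanish by the weighted decay, the volume term reduces to $\int\bm{f}_m\cdot\bm{N}^{(k)}$ because $\textrm{div}(\mathbb{C}\widehat\nabla\bm{N}^{(k)})=\bm{0}$ off $\bm{y}$, and the integral over $\partial B_\delta(\bm{y})$ must be shown to tend to $u_{m,k}(\bm{y})$ as $\delta\to0$. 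This last step is the main obstacle and is handled using $\mathbf{N}=\widetilde{\mathbf{N}}+\mathbf{M}$: the remainder $\bm{M}^{(k)}\in H^1$ and the term $[\mathbb{C}(\cdot)-\mathbb{C}(\bm{y})]\widehat\nabla\widetilde{\bm{N}}^{(k)}\in L^2$ (Proposition \ref{lem: F in L2}) give no spherical flux in the limit, along a subsequence of radii, by the integrability in $\delta$ of the spherical $L^2$ norms; the frozen-coefficient part $\mathbb{C}(\bm{y})\widehat\nabla\widetilde{\bm{N}}^{(k)}$ has conormal flux exactly $\bm{e}_k$ over every small sphere; and, using $\int_{\partial B_\delta(\bm{y})}|\mathbb{C}\widehat\nabla\bm{N}^{(k)}|\,d\sigma\lesssim 1$ from \eqref{eq: decay_cond_Neumann} together with the continuity of $\bm{u}_m$ at $\bm{y}$, the surface term converges to $\bm{u}_m(\bm{y})\cdot\bm{e}_k=u_{m,k}(\bm{y})$, while the term $\int_{\partial B_\delta}(\mathbb{C}\widehat\nabla\bm{u}_m)\bm{\nu}\cdot\bm{N}^{(k)}\,d\sigma=O(\delta)$ vanishes.

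Finally I would integrate by parts to move the divergence off $\mathbf{\Psi}_m$ (legitimate since $\mathbf{\Psi}_m$ is smooth, compactly supported, and $\mathbf{N}(\cdot,\bm{y})$ is $H^2_1$ near $\textrm{supp}\,\mathbf{\Psi}_m$), obtaining $u_{m,k}(\bm{y})=-\int_{\mathbb{R}^3_-}\mathbf{\Psi}_m(\bm{x}):\widehat\nabla_{\bm{x}}\bm{N}^{(k)}(\bm{x},\bm{y})\,d\bm{x}$ using the symmetry of $\mathbf{\Psi}_m$. Letting $m\to\infty$, the left side tends to $u_k(\bm{y})$ by the previous paragraph, and the right side tends to $-\int_S\mathbf{\Psi}(\bm{x}):\widehat\nabla_{\bm{x}}\bm{N}^{(k)}(\bm{x},\bm{y})\,d\sigma(\bm{x})$, because $\mathbf{\Psi}_m\,d\bm{x}=(\mathbf{\Psi}\delta_S)*\phi_{1/m}$ tested against the tensor field $\widehat\nabla_{\bm{x}}\bm{N}^{(k)}(\cdot,\bm{y})$, which is $H^1$ in a neighbourhood of $\overline S$ (Proposition \ref{prop: N_dag}), converges to integration of $\mathbf{\Psi}$ against the trace of that field on $S$. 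Using the symmetry of $\mathbb{C}$ exactly as in the proof of Lemma \ref{lem: equivalence_two_problems}, $\mathbf{\Psi}:\widehat\nabla_{\bm{x}}\bm{N}^{(k)}=\mathbb{C}(\bm{g}\otimes\bm{n}):\widehat\nabla_{\bm{x}}\bm{N}^{(k)}=\bm{g}\cdot(\mathbb{C}\widehat\nabla_{\bm{x}}\bm{N}^{(k)})\bm{n}$, which is precisely the $k$-th component of $\big[(\mathbb{C}(\bm{x})\widehat\nabla_{\bm{x}}\mathbf{N}(\bm{x},\bm{y}))\bm{n}(\bm{x})\big]^T\bm{g}(\bm{x})$, and \eqref{eq: repr_u_var_coeff} follows.
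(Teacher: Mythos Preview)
Your argument is correct and takes a genuinely different route from the paper. The paper works directly with the very weak solution $\bm{u}\in E_0(\mathbb{R}^3_-)$: it applies the abstract Green's formula \eqref{eq: green's formulas} on $\mathbb{R}^3_-\setminus\overline{B_r(\bm{y})}$ with $\bm{N}^{(k)}\in H^2_1(\mathbb{R}^3_-\setminus\overline{B_r(\bm{y})})$ playing the role of the test function in $V$, obtains the identity \eqref{eq: rapr_rough} with the duality pairing $\langle\bm{f}_S,\bm{N}^{(k)}\rangle$ on the right, and then sends $r\to 0$ using the local growth bounds \eqref{eq: estim_Neumann} coming from the Fuchs decomposition $\mathbf{N}=\mathbf{\Gamma}+\mathbf{H}_{\bm{y}}$; the pairing with $\bm{f}_S$ is identified with the surface integral via density of $\mathcal{D}$ in $H^2_1$ and the trace theorem. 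You instead mollify the source, produce strong solutions $\bm{u}_m\in H^2_1$, apply the classical Betti identity to the pair $(\bm{u}_m,\bm{N}^{(k)})$, handle the sphere limit with the decomposition $\mathbf{N}=\widetilde{\mathbf{N}}+\mathbf{M}$ of Proposition~\ref{lem: F in L2}, and finally let $m\to\infty$ using continuity of the solution map (Theorem~\ref{th: very_weak_sol}) together with interior elliptic regularity to get pointwise convergence at $\bm{y}$.

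What each approach buys: the paper's route is a single limit and leverages the $E_0$--$V$ duality machinery already set up in Section~\ref{sec: the direct problem}, at the cost of extending Green's formula \eqref{eq: green's formulas} to the domain with an excised ball (done there by a brief remark). Your route is more elementary in that it only ever uses Betti for $H^2_1$ functions and standard interior estimates, and it makes the passage to the surface integral completely transparent through the mollifier; the price is a double limit and the need to justify the subsequential vanishing of the $L^2$ remainder fluxes on $\partial B_\delta(\bm{y})$, which you correctly note is harmless because the full boundary expression is independent of small $\delta$.
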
	

\begin{proof}
We recall that the transmission problem \eqref{eq: Pu} is equivalent to the source problem \eqref{le: distr_sol}, so we provide the integral representation formula starting from \eqref{le: distr_sol}.
From regularity results for elliptic systems (see e.g. \cite{Li-Nirenberg}), it is immediate that the solution $\bu$ is regular in $\RR^3_-\setminus \overline{S}$ and has traction zero on $\{x_3=0\}$. 

We fix $\bm{y}\in \mathbb{R}^3_-\setminus \overline{S}$ and we consider a ball $B_{r}(\bm{y})$ such that $\overline{B_{r}(\bm{y})}\subset \mathbb{R}^3_-$ with $B_{r}(\bm{y})\cap \overline{S}=\emptyset$. 

From Proposition \ref{prop: sobolev_space_source_term}, $\bm{f}_S\in H^{-3/2-\varepsilon}$ and  has compact support in $\mathbb{R}^3_-$. Without loss of generality, we assume the support of $\bm{f}_S$ lies in an open set $\Omega\subset \RR^3_-$, the closure of which does not meet $\RR^2$ nor the boundary of the ball $B_r(\bm{y})$.   We then have from Proposition \ref{prop: H-2_-1 contained in V'} that
	\begin{equation*}
		\bm{f}_S\in H^{-3/2-\varepsilon}_{-1/2-\varepsilon,\Omega}\subset H^{-2}_{-1,\Omega}\subset (H^2_1)'\subset V',
	\end{equation*} 
where $H^{-s}_{-\alpha,\Omega}$ are the spaces of distributions with compact support in $\overline{\Omega}$.
Then, $\bu$ also solves the source problem in $V'$, that is,  $\bu\in E_0(\RR^3)$. 
Moreover, for $k=1,2,3$,  $\bm{N}^{(k)}\in H^2_1(\mathbb{R}^3_-\setminus \overline{B_{r}(\bm{y})})$ from Proposition \ref{prop: N_dag}, and  $(\mathbb{C}\widehat{\nabla}\bm{N}^{(k)})\bm{e}_3=\bm{0}$ on $\{x_3=0\}$ by hypothesis. 
We observe that we can then apply Green's formula \eqref{eq: green's formulas} in $\mathbb{R}^3_-\setminus \overline{B_{r}(\bm{y})}$
with $\bm{N}^{(k)}$ as test function. (That formula is derived in $\RR^3_-$, but it can be extended to $\RR^3_-\setminus  \overline{B_{r}(\bm{y})})$ in this case, since both $\bu$ and $\bf{N}$ are regular near $\partial B_{r}(\bm{y})$.)

As a result, we obtain that 
	\begin{equation}\label{eq: rapr_rough}
		\begin{aligned}
		&\int_{\partial B_{r}(\bm{y})}(\mathbb{C}(\bm{x})\widehat{\nabla}\bm{N}^{(k)}(\bm{x},\bm{y}))\bm{n}\cdot\bm{u}(\bm{x})\, d\sigma(\bm{x})-\int_{\partial B_{r}(\bm{y})}(\mathbb{C}(\bm{x})\widehat{\nabla}\bm{u}(\bm{x}))\bm{n}\cdot\bm{N}^{(k)}(\bm{x},\bm{y})\, d\sigma(\bm{x})\\
		&=\langle \bm{f}_S,\bm{N}^{(k)}(\cdot,\bm{y})\rangle_{(H^2_1)'(\mathbb{R}^3_-\setminus \overline{B_{r}(\bm{y})}), H^2_1(\mathbb{R}^3_-\setminus \overline{B_{r}(\bm{y})})}\\
		&=-\langle \mathbb{C}(\bm{g}\otimes \bm{n})\delta_S,\nabla\bm{N}^{(k)}(\cdot,\bm{y})\rangle_{(H^{1}_{1})'(\mathbb{R}^3_-\setminus \overline{B_{r}(\bm{y})}), H^{1}_{1}(\mathbb{R}^3_-\setminus \overline{B_{r}(\bm{y})})},	
		\end{aligned}
	\end{equation}
where we used that  $\textrm{div}(\mathbb{C}\widehat{\nabla}\bm{N}^{(k)}(\bm{x},\bm{y}))=\bm{0}$ in $\mathbb{R}^3_-\setminus \overline{{B_{r}(\bm{y})}}$ and the traction-free boundary condition.
The last equality above follows from the fact that 
	\begin{equation*}
		 \mathbb{C}(\bm{g}\otimes \bm{n})\delta_S\in H^{-1/2-\varepsilon}_{-1/2-\varepsilon,\Omega}(\mathbb{R}^3_-\setminus \overline{B_{r}(\bm{y})})\subset (H^{1/2+\varepsilon}_{1/2+\varepsilon})'(\mathbb{R}^3_-\setminus \overline{B_{r}(\bm{y})})\subset (H^1_1)'(\mathbb{R}^3_-\setminus \overline{B_{r}(\bm{y})}). 
	\end{equation*}
As discussed in \cite{Fuchs84,Fuchs86}, the Neumann function  admits the decomposition:
\[
    {\bf{N}}(\bm{x},\bm{y})={\bf{\Gamma}}(\bm{x}-\bm{y})+{\bf{H}}_{\bm{y}}^{(k)}(\bm{x},\bm{y}),
\]
where ${\bf{\Gamma}}$ denotes the fundamental solution for the Lam\'e operator with constant coefficients (we freeze the coefficients at $\bm{y}$) and ${\bm{H}}_{\bm{y}}$ is a more regular remainder. Using the Lipschitz continuity of the Lam\'e coefficients, it is possible to prove that there exist $C_1$, $C_2$, $C_3$, $C_4>0$, which depend on the constants in the a priori assumptions \eqref{ass: c1,1 regularity} and \eqref{eq: strong convexity} on $\CC$, and on the distance of the fixed point $\bm{y}$ from $\{x_3=0\}$, such that 
	\begin{equation}\label{eq: estim_Neumann}
		\begin{aligned}
		|\bm{N}^{(k)}(\bm{x},\bm{y})|&\leq C_1 |\bm{x}-\bm{y}|^{-1}+ C_2\\
		|\nabla_{\bm{x}}\bm{N}^{(k)}(\bm{x},\bm{y})|&\leq C_3 |\bm{x}-\bm{y}|^{-2} + C_4 |\bm{x}-\bm{y}|^{-1}.
		\end{aligned}
	\end{equation} 
Following the same calculations as, for example, in \cite[Theorem 3.3]{Aspri-Beretta-Mascia} and employing the local estimates  \eqref{eq: estim_Neumann}, it is straightforward to prove that 
	\begin{equation}\label{eq: est_rapr}
		\Bigg|\int_{\partial B_{r}(\bm{y})}(\mathbb{C}\widehat{\nabla}\bm{u})\bm{n}\cdot \bm{N}^{(k)}\, d\sigma(\bm{x})\Bigg| \to 0,\, \, \textrm{as}\,\, r\to 0,
	\end{equation} 
and 
	\begin{equation}\label{eq: uk_rapr}
		\int_{\partial B_{r}(\bm{y})}(\mathbb{C}\widehat{\nabla}\bm{N}^{(k)})\bm{n}\cdot \bm{u}\, d\sigma(\bm{x}) \to u_k(\bm{y}),\, \, \textrm{as}\,\, r\to 0,
	\end{equation} 
where $u_k$ is $k$-th component of the displacement vector $\bm{u}$.	
To handle the last term in \eqref{eq: rapr_rough}, we use the density of the space $\mathcal{D}(\overline{\mathbb{R}^3_-}\setminus B_{r}(\bm{y}))$ in $H^1_1(\mathbb{R}^3_-\setminus \overline{B_{r}(\bm{y})})$. By density, there exist
	\begin{equation*}
		\{\bm{\eta}^{(k)}_j\}\in \mathcal{D}(\overline{\mathbb{R}^3_-}\setminus B_{r}(\bm{y}))\,\, \textrm{such that}\, \, \bm{\eta}^{(k)}_j \overset{j\to\infty}{\longrightarrow} \bm{N}^{(k)}(\cdot,\bm{y})\,\, \textrm{in}\,\,  H^2_1(\mathbb{R}^3_-\setminus \overline{B_{r}(\bm{y})}), 
	\end{equation*}
hence $\nabla\bm{\eta}^{(k)}_j \to \nabla\bm{N}^{(k)}(\cdot,\bm{y})$ in $H^{1/2}(S)$, for $k=1,2,3$. Therefore,
	\begin{equation}\label{eq: limit_rapr}
		\lim\limits_{j\to \infty}\int_{S}\mathbb{C}(\bm{x})(\bm{g}(\bm{x})\otimes \bm{n}(\bm{x})): \nabla \bm{\eta}^{(k)}_j(\bm{x})\, d\sigma(\bm{x})=\int_{S} \mathbb{C}(\bm{x})(\bm{g}(\bm{x})\otimes \bm{n}(\bm{x})): \nabla\bm{N}^{(k)}(\bm{x},\bm{y})\, d\sigma(\bm{x}),
	\end{equation}	
where we used that, by \eqref{eq: def_hdelta},
	\begin{equation*}
		\langle \mathbb{C}(\bm{g}\otimes \bm{n})\delta_S,\nabla\bm{\eta}\rangle=\int_{S}(\mathbb{C}(\bm{g}\otimes \bm{n})):\nabla\bm{\eta}\, d\sigma(\bm{x}).
	\end{equation*}
From \eqref{eq: limit_rapr}, \eqref{eq: uk_rapr}, \eqref{eq: est_rapr} and \eqref{eq: rapr_rough}, taking the limit $r\to 0$,  we obtain  the representation formula \eqref{eq: repr_u_var_coeff} exploiting  the symmetries of the tensor $\mathbb{C}$. 	
\end{proof}

\begin{remark}
 It is possible to show that $\bm{u}\in H^\sigma_{\text{loc}}(\mathbb{R}^3_-\setminus \overline{S})$, with 
 $\sigma<1$, when $\bg\in H^{1/2}(S)$. In fact, we can assume that $S$ is part of the boundary of a compact 
 Lipschitz domain $D$ with $\overline{D}\subset \RR^3_-$. Since $\partial S$ is assumed also Lipschitz, we can extend $\bm{g}$ by zero to the 
 complement of  $S$ in $\partial D$ and have that  $\bm{g}\in H^s(\partial D)$ for $s<1/2$. Then, from regularity 
 results for layer  potentials  in the case of a Lipschitz surface (see \cite[Theorem 8.7]{Mitrea-Taylor}), it follows that 
 $\bu\in H^{1/2+s}(D)$,  and $\bu\in H^{1/2+s}(\widetilde{D})$, where $\widetilde{D}$ is a compact Lipschitz domain, the boundary 
 of which contains $\partial  D$ and which is contained in the complement of $\overline D$ in $\RR^3_-$. 
\end{remark}

\subsubsection*{{\bf An explicit example.}}

We  consider now the particular case of the an isotropic homogeneous half space. We denote the constant elasticity 
tensor with $\mathbb{C}_0$ and  its Lam\'e coefficients with $\mu_0$ and $\lambda_0$. These satisfy the strong 
convexity condition $\mu_0>0$ and $3\lambda_0+2\mu_0>0$. 
In this setting, taking $S$ to be a rectangular Volterra dislocation (which, we recall, means a constant displacement jump 
distribution on $S$), in \cite{Okada} Okada  gives an explicit expression of the solution to Problem \eqref{eq: Pu}, 
highlighting the presence of singularities on the vertices of the rectangular dislocation surface. This solution is well known and 
applied in the geophysical literature (see for example \cite{Segall10,Zwieten-Hanssen-Gutierrez} and references 
therein).
   
We denote the Neumann function for a homogeneous and isotropic half space by $\mathbf{N}_0(\bm{x},
\bm{y})$. Its explicit expression can be found, for instance, in 
\cite{Mindlin36,Mindlin54,Aspri-Beretta-Mascia,Martin-Paivarinta-Rempel}. Moreover, we assume $S$ is a 
rectangle parallel to the plane $\{x_3=0\}$, that is,  
	\begin{equation}
		S=\{(y_1,y_2,y_3)\in\mathbb{R}^3:\, a\leq y_1\leq b,\, c\leq y_2\leq d,\, y_3=-|\alpha|\},
	\end{equation}
with $a, b, c, d,\alpha\in\mathbb{R}$, and we assume that $\bm{g}:=\bm{g}_c=(k_1,k_2,k_3)^T$ on $S$, with 
$k_i\in\mathbb{R}$, $i=1,2,3$. This choice for $S$ and $\bm{g}_c$ can be seen as a particular case of the 
rectangular dislocation surface considered by Okada in \cite{Okada}, and it  is the simplest case in which the source has support 
on the whole of $\overline{S}$.

In this setting, we  show that the solution $\bm{u}$ of \eqref{eq: Pu} is not in $H^1_0(\mathbb{R}^3_-\setminus 
\overline{S})$. 
The representation formula \eqref{eq: repr_u_var_coeff} becomes  
\begin{equation}\label{eq: repr_u}
         \bm{u}(\bm{x})=-\int\limits_{S}\left(\mathbb{C}_0\widehat{\nabla}_{\bm{y}}(\mathbf{N}_0 (\bm{y},
         \bm{x})) \bm{e}_3\right)^T \bm{g}_c\, d\sigma(\bm{y}).
\end{equation}
Again, we use that  $\mathbf{N}_0=\mathbf{\Gamma}+\mathbf{R}$, where $\mathbf{\Gamma}$ is the 
fundamental solution of the constant-coefficient Lam\'e operator, also known as the {\em Kelvin fundamental solution} (see \cite{Kupradze}),  and 
$\mathbf{R}$ is a regular function in $\mathbb{R}^3_-$ (see e.g. 
\cite{Aspri-Beretta-Mascia}). The singularities of $\bm{u}$ are then contained in the term  
	\begin{equation*}
		\bm{u}_{{\Gamma}}:=\int\limits_{S}(\mathbb{C}_0\widehat{\nabla}_{\bm{y}}\mathbf{\Gamma}
		(\bm{x}-\bm{y})\bm{e}_3)^T\bm{g}_c\, d\sigma(\bm{y}).
	\end{equation*} 
From straightforward calculations we find that 
	\begin{equation*}
		\mathbf{\Xi}:=(\mathbb{C}_0\widehat{\nabla}\mathbf{\Gamma}\bm{e}_3)^T=
			\begin{bmatrix}
				\mu\left(\frac{\partial \Gamma_{11}}{\partial y_3} + \frac{\partial \Gamma_{31}}{\partial 
				y_1}\right) & \mu\left(\frac{\partial \Gamma_{21}}{\partial y_3} + \frac{\partial \Gamma_{31}}
				{\partial y_2}\right) & \lambda \textrm{div}\, \bm{\varGamma}^{(1)} + 2\mu \frac{\partial 
				\Gamma_{31}}{\partial y_3}\\
				\mu\left(\frac{\partial \Gamma_{12}}{\partial y_3} + \frac{\partial \Gamma_{32}}{\partial 
				y_1}\right) & \mu\left(\frac{\partial \Gamma_{22}}{\partial y_3} + \frac{\partial \Gamma_{32}}
				{\partial y_2}\right) & \lambda \textrm{div}\, \bm{\varGamma}^{(2)} + 2\mu \frac{\partial 
				\Gamma_{32}}{\partial y_3}\\
				\mu\left(\frac{\partial \Gamma_{13}}{\partial y_3} + \frac{\partial \Gamma_{33}}{\partial 
				y_1}\right) & \mu\left(\frac{\partial \Gamma_{23}}{\partial y_3} + \frac{\partial \Gamma_{33}}
				{\partial y_2}\right) & \lambda \textrm{div}\, \bm{\varGamma}^{(3)} + 2\mu \frac{\partial 
				\Gamma_{33}}{\partial y_3}\\				
			\end{bmatrix} ,
	\end{equation*}
where $\bm{\varGamma}^{(i)}$, for $i=1,2,3$, represents the $i$-th column vector of the matrix 
$\mathbf{\Gamma}$. 
Therefore
	\begin{equation}\label{eq: u_Gamma}
		\bm{u}_{\Gamma}=\int\limits_{a}^{b}\int\limits_{c}^{d}\mathbf{\Xi}(\bm{x}-\bm{y})\bm{g}_c\, 
		d\sigma(\bm{y})=\sum_{j=1}^{3}\int\limits_{a}^{b}\int\limits_{c}^{d}\Xi_{ij}(\bm{x}-\bm{y})k_j\, 
		d\sigma(\bm{y}),\qquad i=1,2,3.
	\end{equation} 
We find that $\bm{u}_{{\Gamma}}(\bx)$ has logarithmic singularities as $\bx$ approaches  the vertices of the 
rectangle $S$, which comes exclusively from the entries $\Xi_{31}, \Xi_{32}$ (and $\Xi_{13}, \Xi_{23}$ due to the 
symmetries of $\Gamma$) of the matrix $\mathbf{\Xi}$. (See Appendix \ref{app: sing_rect} for explicit formulas of 
these terms.)

\begin{remark}
Because of the nature of the entries in $\mathbf{\Xi}$ from which the singularities originates, we stress that these singularities are present both for tangential as well as normal (and hence oblique) jumps $\bg$.
\end{remark}


\section{Formulation of the inverse problem: a uniqueness result}\label{sec: inverse problem}
In this section, we formulate and study an inverse dislocation problem. To be more precise, we are interested in the 
following boundary inverse problem:\\
\textit{ Let $\bm{u}$ be  solution to Problem \eqref{eq: Pu} with unknown dislocation surface $S$ and jump distribution $\bg$ 
	on $S$. Assume that $\bu=\bm{u}_{\Sigma}$ is known  in a bounded open region $\Sigma$ of the boundary 
	$x_3=0$, then  determine both $S$ and $\bm{g}$}. 

In particular, we want to establish under which assumptions the displacement $\bm{u}_{\Sigma}$ determines 
$S$ and $\bm{g}$ uniquely. To prove uniqueness,  we will need an extra geometrical assumption on $S$ and 
supplementary assumptions on $\bm{g}$, but we allow the elastic coefficients to be only Lipschitz continuous by adapting the proofs in \cite{Alessandrini-Rondi-Rosset-Vessella} (valid for scalar equations) to the Lam\'e system.  
Generically, we expect this regularity to be optimal for the inverse problem, unless further assumptions on the form 
of the coefficients is made, such as assuming the Lam\'e parameters to be piece-wise constant on  a given mesh. We 
reserve to address this point in future work.  We remark again that the unique continuation property may not hold  for a second-order elliptic operator, if its coefficients are only in H\"older classes $C^{0,\alpha}$, $\alpha<1$, and not Lipschitz continuous (see \cite{Miller,Plis}). \\
{To prove that the dislocation surface $S$ is uniquely determined by the boundary data, we restrict $S$ to the class $\mathcal{S}$ of open, bounded, Lipschitz, piecewise-linear surfaces that are graphs with respect to a {\em fixed}, but {\em arbitrary}, coordinate frame. This assumption is not overly restrictive in the case of faults as they are frequently nearly horizontal. Note that we do not specify the frame, so vertical faults are allowed too.}

Our uniqueness result is the following theorem.

\begin{theorem}\label{th: inv_uniq}
	{Let $S_1, S_2\in \mathcal{S}$} such that $\overline{S}_1, \overline{S}_2 \subset
	\mathbb{R}^3_-$, and  let $\bm{g}_i$ be bounded tangential fields in $ H^{1/2}(S_i)$ with \textup{supp}$\,
	\bm{g}_i=\overline{S}_i$, for $i=1,2$. Let $\bm{u}_i$, for $i=1,2$, be the unique solution of \eqref{eq: Pu} in 
	$H^{1/2-\varepsilon}_{-1/2-\varepsilon}(\mathbb{R}^3_-)$ corresponding to $\bm{g}=\bm{g}_i$ and $S=S_i$. 
	Let $\Sigma$ be an open set in $\{x_3=0\}$. If $\bm{u}_{1\big|_{\Sigma}}=\bm{u}_{2\big|
		_{\Sigma}}$,  then $S_1=S_2$ and $\bm{g}_1=\bm{g}_2$.
\end{theorem}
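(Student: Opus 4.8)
The plan is to combine the well-posedness and equivalence results already established (Theorem~\ref{th: very_weak_sol} and Lemma~\ref{lem: equivalence_two_problems}) with the unique continuation property for the Lam\'e system with Lipschitz coefficients and a geometric argument that uses crucially the a priori class $\mathcal{S}$. Throughout, the tangential (or normal) structure of the $\bm{g}_i$ enters only through the regularity and representation results of the previous sections, which guarantee that each $\bm{u}_i$ is a genuine very weak solution of the transmission problem and is smooth in $\RR^3_-\setminus\overline{S_i}$.

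\emph{Step 1 (propagation by unique continuation).} Set $\bm{w}:=\bm{u}_1-\bm{u}_2$ and let $G$ be the unbounded connected component of $\RR^3_-\setminus(\overline{S_1}\cup\overline{S_2})$; since each $\overline{S_i}$ is compact and at positive distance from $\{x_3=0\}$, a one-sided neighbourhood of $\Sigma$ lies in $G$. In $G$ both $\bm{u}_i$ solve $\operatorname{div}(\CC\widehat{\nabla}\bm{u}_i)=\bm{0}$ and satisfy $(\CC\widehat{\nabla}\bm{u}_i)\bm{e}_3=\bm{0}$ on $\{x_3=0\}$, so $\bm{w}$ solves the homogeneous Lam\'e system in $G$, is traction-free on $(\partial G)\cap\{x_3=0\}$, and vanishes on $\Sigma$. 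After an even reflection across $\{x_3=0\}$ compatible with the traction-free condition (which preserves isotropy and Lipschitz regularity of the coefficients), $\bm{w}$ becomes a solution of an elliptic system with Lipschitz coefficients in a full neighbourhood of $\Sigma$ with vanishing Cauchy data there; the unique continuation property for the Lam\'e operator with Lipschitz coefficients, together with connectedness of $G$, then gives $\bm{w}\equiv\bm{0}$ in $G$, i.e.\ $\bm{u}_1=\bm{u}_2$ on $G$.

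\emph{Step 2 ($S_1=S_2$).} Suppose not. Relabelling if necessary, the hypothesis $S_i\in\mathcal{S}$ (piecewise-linear graphs with respect to a common coordinate frame) is used exactly here to produce a point $\bar{\bm{x}}$ in the relative interior of $S_1\setminus\overline{S_2}$ and a ball $B=B_\eta(\bar{\bm{x}})$ with $B\cap\overline{S_2}=\varnothing$, $B\cap\partial S_1=\varnothing$, and \emph{both} components of $B\setminus S_1$ contained in $G$ --- that is, the surfaces differ along a portion of $\partial G$ accessible from $G$ on both sides. On $B$ the field $\bm{u}_2$ solves the homogeneous Lam\'e system (no dislocation there), hence is smooth on all of $B$ by interior elliptic regularity; by Step~1, $\bm{u}_1=\bm{u}_2$ a.e.\ on $B$, so $\bm{u}_1$ coincides with a smooth function on $B$ and $[\bm{u}_1]_{S_1}=\bm{0}$ on the nonempty relatively open set $S_1\cap B$. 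Since $[\bm{u}_1]_{S_1}=\bm{g}_1$, this yields $\bm{g}_1\equiv\bm{0}$ on $S_1\cap B$, contradicting $\operatorname{supp}\bm{g}_1=\overline{S_1}$. Hence $S_1=S_2=:S$.

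\emph{Step 3 ($\bm{g}_1=\bm{g}_2$).} With $S_1=S_2=S\in\mathcal{S}$, the surface $\overline{S}$ does not separate the half-space (being a graph), so $\RR^3_-\setminus\overline{S}$ is connected and equals $G$. By Step~1, $\bm{w}\equiv\bm{0}$ on $\RR^3_-\setminus\overline{S}$, so its one-sided traces on $S$ from both sides vanish, whence $\bm{g}_1-\bm{g}_2=[\bm{u}_1]_S-[\bm{u}_2]_S=[\bm{w}]_S=\bm{0}$, i.e.\ $\bm{g}_1=\bm{g}_2$. The main obstacle is the topological input of Step~2: showing that two distinct surfaces in $\mathcal{S}$ must differ along a piece of $\partial G$ both of whose local sides belong to the single component $G$; this is where the piecewise-linear-graph structure is indispensable and where the scalar argument of Alessandrini--Rondi--Rosset--Vessella must be transferred to the present half-space/Lam\'e setting. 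The unique continuation ingredient itself is, by comparison, a direct citation once the reflection across the traction-free boundary has been arranged.
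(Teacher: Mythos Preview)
Your Step~1 is fine and essentially matches the paper. The gap is in Step~2: the topological claim you identify as ``the main obstacle'' is not just an obstacle---it is false in general, and the paper's proof does \emph{not} establish it.

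Concretely, two piecewise-linear graphs in the same frame can enclose a bounded region $D$ (think of a tent-shaped $S_1$ sitting above a flat $S_2$, meeting along their boundaries). When this happens, for any $\bar{\bm{x}}$ in the relative interior of $S_1\setminus\overline{S_2}$ and any small ball $B=B_\eta(\bar{\bm{x}})$, one component of $B\setminus S_1$ lies inside $D$ and the other lies in $G$; you never get both sides in $G$. So your argument ``$\bm{u}_1=\bm{u}_2$ a.e.\ on $B$'' breaks down, because on the $D$-side you only know $\bm{u}_1=\bm{u}_2$ in $G$, not in $D$. This is precisely the paper's Case~(ii), and it cannot be folded into Case~(i) by a topological observation about $\mathcal{S}$.

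The paper handles this enclosed-region case by a completely different mechanism that you have discarded. From $\bm{w}=\bm{0}$ in $G$ and the continuity of tractions across each $S_i$, one gets $(\CC\widehat{\nabla}\bm{w}^+)\bm{n}=\bm{0}$ on $\partial D$, so $\bm{w}^+$ is an infinitesimal rigid motion $\mathbf{A}\bm{x}+\bm{c}$ in $D$. Now the \emph{tangentiality} of the $\bm{g}_i$ is used essentially (not merely for regularity, as you assert): it forces $\bm{w}^+\cdot\bm{n}=0$ on $\partial D$, and then the \emph{piecewise-linear} structure is used to produce enough independent normals at vertices to kill $\mathbf{A}$ and $\bm{c}$. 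Only after $\bm{w}^+=\bm{0}$ does one return to the jump argument. Your proposal omits this entire rigid-motion/vertex step and misattributes the role of both the tangentiality hypothesis and the class $\mathcal{S}$.
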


\begin{remark}
	We first note that by local regularity results, since $\mathbb{C}\in C^{0,1}(\overline{\mathbb{R}^3_-})$, the 
	displacements $\bm{u}_i$, for $i=1,2$, are regular in a neighborhood $\mathcal{U}$ of $\{x_3=0\}$, more precisely $u_i\in  C^{1,\alpha}(\mathcal{U})$ with $0\leq\alpha<1$ (see e.g. \cite{Li-Nirenberg}), so that $\bm{u}_{i\big|_{\Sigma}}$ can be interpreted as the trace of $\bm{u}_i$ pointwise. The same  conclusion holds for the 
	traction-free boundary condition $(\mathbb{C}\widehat{\nabla}\bm{u}_i)\bm{e}_3=0$ on $\mathbb{R}^2$.
\end{remark}

{We denote by $G$ the unbounded connected component of $\mathbb{R}^3_-\setminus \overline{S_1\cup S_2}$ containing $\Sigma$.
Since $S_1$ and $S_2$ are bounded, there exists only one such components. By definition,  $G\subseteq(\mathbb{R}^3_-\setminus \overline{S_1\cup S_2})$.
In addition, we define
\begin{equation}\label{eq: mathcal_G}
\mathcal{G}:=\partial G \setminus \{x_3=0\}.
\end{equation}
Before proving Theorem \ref*{th: inv_uniq}, we need an auxiliary result.
\begin{lemma}\label{lem: surf}
	Let $S_1, S_2\in \mathcal{S}$ such that $\overline{S}_1, \overline{S}_2 \subset \mathbb{R}^3_-$. Then $\mathcal{G}=\overline{S_1\cup S_2}$.	
\end{lemma}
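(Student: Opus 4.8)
The plan is to prove the two inclusions $\mathcal{G}\subseteq\overline{S_1\cup S_2}$ and $\overline{S_1\cup S_2}\subseteq\mathcal{G}$. Write $U:=\mathbb{R}^3_-\setminus\overline{S_1\cup S_2}$, which is open in $\mathbb{R}^3$, and note that $\overline{S_1\cup S_2}=\overline{S_1}\cup\overline{S_2}$ is compact and, being a closed subset of $\mathbb{R}^3_-$, is contained in a slab $\{x_3<-d_0\}$ for some $d_0>0$; in particular the slab $\Sigma_0:=\{-d_0<x_3<0\}$ is a connected, unbounded subset of $U$, hence contained in the unique unbounded component $G$. The first inclusion is purely topological: since $G\subseteq\{x_3<0\}$ we have $\partial G\subseteq\{x_3\le 0\}$, and if a point $q\in\partial G$ with $x_3(q)<0$ satisfied $q\notin\overline{S_1\cup S_2}$, then a small ball $B_r(q)\subseteq U$ would be connected, hence contained in a single component of $U$, which must be $G$ because $q\in\overline{G}$, forcing $q\in G$ and contradicting $q\in\partial G$ (as $G$ is open). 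Thus $\partial G\cap\{x_3<0\}\subseteq\overline{S_1\cup S_2}$, and since $\overline{S_1\cup S_2}\subset\{x_3<0\}$ this yields $\mathcal{G}\subseteq\overline{S_1\cup S_2}$.

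For the reverse inclusion I would exploit the graph hypothesis defining $\mathcal{S}$. In the fixed coordinate frame each $S_i$ is the graph of a Lipschitz function $\psi_i$ over a bounded open set $\Omega_i\subset\mathbb{R}^2$, with graph direction a fixed unit vector $\bm{\nu}$; extending $\psi_i$ continuously to $\overline{\Omega_i}$ shows that $\overline{S_i}$ is exactly the graph of $\psi_i$ over $\overline{\Omega_i}$, and consequently every line in the direction $\bm{\nu}$ meets $\overline{S_1\cup S_2}=\overline{S_1}\cup\overline{S_2}$ in at most two points (at most one per sheet). Fix $p\in\overline{S_1\cup S_2}$; then $x_3(p)<0$ and $p\notin G$. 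Parametrizing the line $\ell_p=\{p+s\bm{\nu}:s\in\mathbb{R}\}$ so that $p$ corresponds to $s=0$, the finite set of parameters $s$ with $p+s\bm{\nu}\in\overline{S_1\cup S_2}$ contains $0$ and has at most two elements, so $0$ is either its maximum or its minimum; choosing $\varepsilon=+1$ in the former case and $\varepsilon=-1$ in the latter, the open ray $R_p:=\{p+\varepsilon t\bm{\nu}:t>0\}$ issues from $p$ and is disjoint from $\ell_p\cap\overline{S_1\cup S_2}$, hence disjoint from $\overline{S_1\cup S_2}$ entirely since $R_p\subseteq\ell_p$.

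It then remains to connect $R_p$ to $G$ inside $U$, and this is the delicate step. If $\varepsilon\,\bm{\nu}\cdot\bm{e}_3\le 0$, then $x_3$ is non-increasing along $R_p$ starting from $x_3(p)<0$, so $R_p\subseteq\mathbb{R}^3_-$; being an unbounded connected subset of $U$, it lies in the unique unbounded component $G$. If $\varepsilon\,\bm{\nu}\cdot\bm{e}_3>0$, then along $R_p$ the coordinate $x_3$ increases from $x_3(p)<-d_0$ toward $0$, so $R_p\cap\mathbb{R}^3_-$ is a segment issuing from $p$ that meets the slab $\Sigma_0$; being a connected subset of $U$ that meets $\Sigma_0\subseteq G$, it lies in $G$. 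In both cases a nonempty arc of $G$ issues from $p$, so $p\in\overline{G}$, and since $p\notin G$ and $x_3(p)<0$ we conclude $p\in\partial G\setminus\{x_3=0\}=\mathcal{G}$. Combining the two inclusions gives $\mathcal{G}=\overline{S_1\cup S_2}$. The main obstacle, and the only place the hypotheses defining $\mathcal{S}$ are used, is precisely this last argument: the escape from $p$ must be carried out along the graph direction $\bm{\nu}$ rather than along $\bm{e}_3$ (the $S_i$ need not be graphs over $\{x_3=0\}$), and one has to check that the resulting ray, which by construction misses $\overline{S_1\cup S_2}$, either remains in the half-space and is unbounded or else reaches the safe slab $\{-d_0<x_3<0\}$ — both conclusions resting on the compactness of $\overline{S_1\cup S_2}$, which furnishes the gap $d_0$ and the uniqueness of the unbounded complementary component.
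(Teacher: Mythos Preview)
Your proof is correct and rests on the same key observation as the paper's: a line in the common graph direction $\bm{\nu}$ meets $\overline{S_1\cup S_2}$ in at most two points. The paper packages this as a contradiction --- if some $\bm{x}\in\overline{S_1\cup S_2}$ were missing from $\mathcal{G}$, the line through $\bm{x}$ in direction $\bm{\nu}$ would have to cross $\mathcal{G}\subseteq\overline{S_1\cup S_2}$ at two \emph{other} points, giving three collinear points on the union of two graphs --- while you argue directly, selecting a ray from $p$ in direction $\pm\bm{\nu}$ that misses $\overline{S_1\cup S_2}$ and exhibiting it as a path into $G$. Your case analysis on the sign of $\varepsilon\,\bm{\nu}\cdot\bm{e}_3$ makes explicit something the paper's one-line justification (``since $\mathcal{G}$ is closed and $\bm{x}\notin\mathcal{G}$, line $r$ must intersect $\mathcal{G}$ in at least two points'') leaves to the reader: namely, why a ray that eventually exits $\mathbb{R}^3_-$ still reaches $G$, via the safe slab $\Sigma_0$. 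The two arguments are essentially dual; yours is longer but self-contained, and in particular it does not rely on the case split $G=\mathbb{R}^3_-\setminus\overline{S_1\cup S_2}$ versus $G\subsetneq\mathbb{R}^3_-\setminus\overline{S_1\cup S_2}$ that the paper invokes.
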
	
\begin{proof}
	We begin by observing that, if $G=\mathbb{R}^3_-\setminus \overline{S_1\cup S_2}$, then clearly  $\mathcal{G}=\overline{S_1\cup S_2}$.
	If $G\subset (\mathbb{R}^3_-\setminus \overline{S_1\cup S_2})$, then by definition $\mathcal{G}\subseteq \overline{S_1\cup S_2}$.
	We prove the reverse inclusion, i.e., $\overline{S_1\cup S_2}\subseteq \mathcal{G}$, arguing by contradiction.
	We hence assume that there exists a point $\bm{x}\in\overline{S_1\cup S_2}$ (for instance, without loss of generality, $\bm{x}\in\overline{S}_1$) such that $\bm{x}\notin\mathcal{G}$. Given  any vector $\bm{v}\neq \bm{0}$, we consider the line $r$ through $\bm{x}$ parallel to $\bm{v}$.
	Since $\mathcal{G}$ is closed and  $\bm{x}\notin\mathcal{G}$, line $r$ must intersecate $\mathcal{G}$ in at least two points
	$\bm{x_1}$, $\bm{x}_2$.
	Therefore, $\bm{x}, \bm{x}_1, \bm{x}_2\in r$ and $\bm{x}_1, \bm{x}_2\in\mathcal{G}$. Now, choosing $\bm{v}$ in the direction along which $S_1$ and $S_2$ are graphs, we have that
	\begin{enumerate}
		\item either $\bm{x}_1$ or $\bm{x}_2$ belongs to $S_1$, hence we reach a contradiction, given that $S_1$ is a graph.
		\item $\bm{x}_1$ and $\bm{x}_2$ belong to $S_2$, which is also a contradiction, since $S_2$ is a graph.
	\end{enumerate}
	Therefore $\mathcal{G}=\overline{S_1\cup S_2}$.	
\end{proof}
}
{We are now in the position to prove the uniqueness theorem.}

\begin{proof}[Proof of Theorem \ref{th: inv_uniq}]
	We let $\bm{w}=\bm{u}_1-\bm{u}_2$, defined on  $\mathbb{R}^3_-\setminus \overline{S_1\cup S_2}$. Then 
	\begin{equation*}
	\bm{w}_{|_{\Sigma}}=(\mathbb{C}\widehat{\nabla}\bm{w})\bm{e}_3 {_{|_{\Sigma}}}=\bm{0},\qquad 
	\textrm{and}\,\qquad \textrm{div}(\mathbb{C}\widehat{\nabla}\bm{w})=\bm{0}\,\,\, \textrm{in}\,\,\, 
	\mathbb{R}^3_-\setminus \overline{S_1\cup S_2}.
	\end{equation*}
	From the homogeneous boundary conditions, it follows that also the Cauchy data are zero. Hence,
	\begin{equation}
	\bm{w}_{|_{\Sigma}}=\frac{\partial\bm{w}}{\partial x_3}_{\Big|_{\Sigma}}=\bm{0}.
	\end{equation}	
	Without loss of generality, we assume that $\Sigma=B'_R(\bm{0})$  (see Section \ref{sec: notation and functional setting} for notation).
	We define  $\widetilde{\bm{w}}$, extension of $\bm{w}$, in $B_R(\bm{0})$ by
	\begin{equation}
	\widetilde{\bm{w}}=
	\begin{cases}
	\bm{w}, & \textrm{in}\, B^-_{R}(\bm{0})\cup \Sigma,\\
	\bm{0}, 	   & \textrm{in}\, B^+_{R}(\bm{0}).
	\end{cases}
	\end{equation}
	{Clearly by the continuity property of $\widetilde{\bm{w}}$ we have that $\widetilde{\bm{w}}\in H^1(B_R(\bm{0}))$.}
	We also extend the elasticity tensor $\CC$ to $B^+_{R}(\bm{0})$ in such a way that the resulting tensor is Lipschitz 
	continuous and strongly convex in $B_{R}(\bm{0})$ (for example, by even reflection).
	We will show that $\widetilde{\bm{w}}$ is a weak solution to the Lam\'e system in $B_{R}(\bm{0})$. We let
	$\bm{\varphi}\in H^1_0(B_R(\bm{0}))$, and test the equation with $\bm{\varphi}$:
	\begin{equation*}
	\int_{B_{R}(\bm{0})}\text{div}( \mathbb{C}\widehat{\nabla}\widetilde{\bm{w}})\cdot \bm{\varphi}\, 
	d\bm{x}=-\int_{B^-_R(\bm{0})} (\mathbb{C}\widehat{\nabla}\bm{w}):\widehat{\nabla}\bm{\varphi}\,
	d\bm{x}=0,
	\end{equation*}
	where in the last equality we have integrated by parts, and used the fact that $\bm{w}$ satisfies the Lam\'e system in $B^-_R(\bm{0})$.
	Consequently, $\widetilde{\bm{w}}\in H^1(B_{R}(\bm{0}))$ is a {weak solution} to
	\begin{equation*}
	\textrm{div}(\mathbb{C}\widehat{\nabla}\widetilde{\bm{w}})=\bm{0},\qquad \textrm{in}\, 
	B_{R}(\bm{0}).
	\end{equation*} 
	We can then apply  Theorem 1.3 in \cite{Lin-Nakamura-Wang} to obtain the strong unique continuation property in 
	$B_{R}(\bm{0})$, which implies the weak continuation property. Since $\widetilde{\bm{w}}=\bm{0}$ in 
	$B^+_{R}(\bm{0})$ and the weak continuation property holds in $B_{R}(\bm{0})$, we have that $\widetilde{\bm{w}}=\bm{0}$ in $B_{R}(\bm{0})$. That is, 
	$\widetilde{\bm{w}}=\bm{w}=\bm{0}$ in $B^-_{R}(\bm{0})$. 
	Using the three-spheres inequality (see Theorem 1.1 in \cite{Lin-Nakamura-Wang}) in the connected component 
	$G$ of $\mathbb{R}^3_-\setminus \overline{S_1 \cup S_2}$ containing $\Sigma$ gives  that $\bm{w}=\bm{0}$ in $G$. We  now distinguish two cases:
	\begin{enumerate}[label=(\roman*), ref=(\roman*)]
		\item {$G=\mathbb{R}^3_-\setminus\overline{S_1\cup S_2}$;}\label{i.unique1}
		\item {$G\subset \mathbb{R}^3_-\setminus\overline{S_1\cup S_2}$.} \label{i.unique2}
	\end{enumerate}	
	
	We start from Case \ref{i.unique1}, {see Figure \ref{fig. inv1} for an example of the geometrical setting}. 
	\begin{figure}[h!]
	\centering
	\includegraphics[angle=25,scale=0.5]{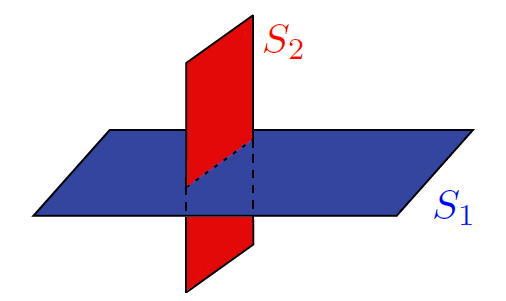}
	\caption{An example of the geometrical setting in Case \ref{i.unique1}.}\label{fig. inv1}
	\end{figure}
	We assume that $\overline{S}_1\neq \overline{S}_2$, and we  fix $\bm{y}\in 
	\overline{S}_1$ such that $\bm{y}\notin\overline{S}_2$. Then there exists a ball $B_r(\bm{y})$ that does not
	intersect $\overline{S}_2$. Hence,
	\begin{equation*}
	\bm{0}=[\bm{w}]_{B_r(\bm{y})\cap S_1}=[\bm{u}_1]_{B_r(\bm{y})\cap S_1}=\bm{g}_1,
	\end{equation*}	
	and this equality {leads to a contradiction, as $\text{supp}\,\bm{g}_1=\overline{S}_1$}. We can repeat the same
	argument, switching the role of $S_1$ and $S_2$, to conclude that $\overline{S}_1=\overline{S}_2$.
	Therefore, 
	\begin{equation*}
	\bm{0}=[\bm{w}]_{S_1}=[\bm{w}]_{S_2}\Rightarrow [\bm{u}_1]_{S_1}=[\bm{u}_2]_{S_2}\Rightarrow 
	\bm{g}_1=\bm{g}_2. 
	\end{equation*}
	We  now turn to Case  \ref{i.unique2}, {see Figure \ref{fig. inv2} for an example of the geometrical setting.}
	\begin{figure}[h!]
		\centering
		\includegraphics{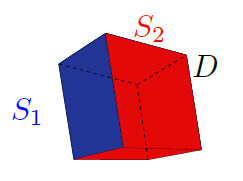}
		\caption{An example of the domain $D$ in Case \ref{i.unique2} where the two surfaces close.}\label{fig. inv2}
	\end{figure}
	{By Lemma \ref{lem: surf}, we can assume, without loss of generality, that there exists a bounded connected domain $D$ such that $\partial D=\overline{S_1\cup S_2}$. Then
	$\bm{w}=\bm{0}$ in a neighborhood of $\partial D$ in $D^C\cap \mathbb{R}^3_-$, since $\bm{w}=\bm{0}$ in $G$.
	The continuity of the tractions $(\mathbb{C}\widehat{\nabla}\bm{u}_1)\bm{n}$ and $(\mathbb{C}\widehat{\nabla}\bm{u}_2)\bm{n}$ in trace sense} across $S_1$ and $S_2$, respectively, implies that
	\begin{equation}\label{eq: conor_w+}
	(\mathbb{C}\widehat{\nabla}\bm{w}^+)\bm{n}=\bm{0}\, \quad \textrm{a.e. on}\,\, \partial D,
	\end{equation}
	where $\bm{w}^+$ indicates the function $\bm{w}$ restricted to $D$ and $\bm{n}$ the outward unit normal to $D$. Moreover, $\bm{w}^+$ satisfies
	\begin{equation}\label{eq: equat_w+}
	\textrm{div}(\mathbb{C}\widehat{\nabla}\bm{w}^+)=\bm{0}\,\quad \textrm{in}\,\, D.
	\end{equation} 
	We conclude  from \eqref{eq: conor_w+} and \eqref{eq: equat_w+} that  $\bm{w}^+$ is a rigid motion, i.e.,
	\begin{equation*}
	\bm{w}^+=\mathbf{A}\bm{x}+\bm{c},
	\end{equation*}	
	where $\bm{c}\in\mathbb{R}^3$ and $\mathbf{A}\in\mathbb{R}^{3\times3}$ is a skew matrix.\\ 
	On the other hand, since $\bm{w}^-=\bm{0}$ on $\partial D$, we have 
	\begin{equation*}
	[\bm{w}]_{\partial D}=\bm{w}^+_{|_{\partial D}},
	\end{equation*}	
	but on $S_1$ we have $[\bm{w}]_{S_1}=[\bm{u}_1]_{S_1}$, which is tangential by assumption; the same argument can be repeated on $S_2$. Therefore,
	\begin{equation}\label{eq: wdotn}
	\bm{w}^+\cdot\bm{n} =\bm{0}\, \qquad \textrm{on}\,\, \partial D.
	\end{equation}
	We will now show that \eqref{eq: wdotn} implies that $\mathbf{A}$ and $\bm{c}$ are zero, which means that $\bm{w}=\bm{0}$ in $\mathbb{R}^3_-\setminus \overline{S_1\cup S_2}$. Then,  reasoning as for Case \ref{i.unique1},  $\overline{S}_1=\overline{S}_2$ and $\bm{g}_1=\bm{g}_2$. 
	Let  $\bm{x}_0$ denote a vertex of $\overline{S_1\cup S_2}$. This vertex  is contained in at least three faces  $F_1, F_2, F_3$ having independent normals $\bm{n}_1,\bm{n}_2,\bm{n}_3$.
	We pick  three points $\bm{x}_1, \bm{x}_2, \bm{x}_3$ on the faces $F_1, F_2, F_3$. Then
	\begin{equation*}
	(\mathbf{A}\bm{x}_i+\bm{c})\cdot\bm{n}_i={0},\,\qquad \forall\, i=1,2,3.
	\end{equation*}
	By continuity, there is a well-defined limit of $(\mathbf{A}\bm{x}+\bm{c})\cdot\bm{n}$ when $\bm{x}$ approaches $\bm{x}_0$. It follows that
	\begin{equation*}
	(\mathbf{A}\bm{x}_0+\bm{c})\cdot \bm{n}_i={0},\qquad \forall\, i=1,2,3,
	\end{equation*}
	which implies by linear independence that 
	\begin{equation}\label{eq: Ax0+c}
	\mathbf{A}\bm{x}_0+\bm{c}=\bm{0}. 
	\end{equation}	
	An analogous  argument can be applied to obtain a similar relation for any other vertex in $\overline{S_1\cup S_2}$. Now, observe
	that there exist at least three vertices that are not co-planar.  We denote them by $\bm{x}^1_0,\bm{x}^2_0, 
	\bm{x}^3_0$. We assume that $\bm{x}^1_0$ does not lie in the plane spanned by $\bm{x}^2_0, 
	\bm{x}^3_0$. Then the vectors $\bm{x}^1_0-\bm{x}^2_0$ and  $\bm{x}^1_0-\bm{x}^3_0$ are independent. Combining  by linearity all the relations of the form 
	\eqref{eq: Ax0+c} for the three vertices gives:
	\begin{equation*}
	\begin{cases}
	\mathbf{A}(\bm{x}^1_0-\bm{x}^2_0)=\bm{0},\\
	\mathbf{A}(\bm{x}^1_0-\bm{x}^3_0)=\bm{0}.\\
	\end{cases}
	\end{equation*}
	The above relations give six independent scalar linear equations for the entries of $A$.
	Since $\mathbf{A}$ is a skew-symmetric  matrix, it follows that $\mathbf{A}=\mathbf{0}$ and, therefore, $\bm{c}=\bm{0}$. 		
\end{proof}
{\begin{remark}
The geometrical assumption on the faults, which requires that the two surfaces are graphs with respect to a fixed, but arbitrary, coordinate frame, is needed in the second part of the proof (case \ref{i.unique2}) to avoid the exceptional case where the two surfaces, or one of the two, could have a component inside the closed domain $D$, see Figure \ref{Fig: sup_cl_comp} to visualize an example of the situation we are describing (the dashed part is inside $D$).
			\begin{figure}[h!]
				\centering
				\includegraphics[scale=0.5]{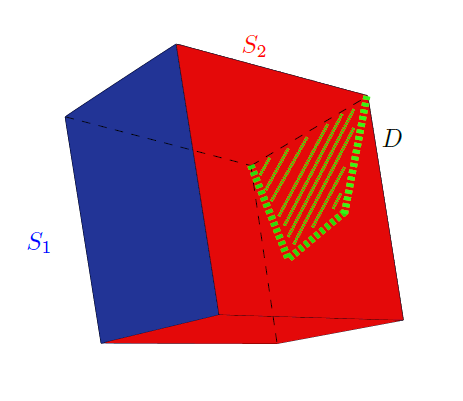}
				\caption{Component inside $D$.}\label{Fig: sup_cl_comp}
			\end{figure} 
			 This hypothesis is sufficient, but certainly not necessary. However, without this assumption, a situation like that one in Figure \ref{Fig: sup_cl_comp} could appear and in this case our argument wouldn't apply. In fact, we would have less information on $\bm{w}$ in the disconnected component, namely, we would not know that the traction is zero on this component but only that it is continous across it. 
\end{remark} }
{\begin{remark}
The first part of the proof (case \ref{i.unique1}) still holds without any additional assumptions on the fault surfaces $S_i$, and slip vectors $\bm{g}_i$, $i=1,2$, beyond those required for the well-posedness of the forward problem and the validity of unique continuation. However, our approach does not allow us to treat the second case (case \ref{i.unique2}). We observe, on the other hand, that even in this second case, uniqueness holds up to an infinitesimal rigid motion, under the geometric assumptions on the fault we make (we still need to exclude {the special case of Figure \ref{Fig: sup_cl_comp}}). Therefore, generically with respect to the slip vectors, we can prove uniqueness, since the space of infinitesimal rigid motions is finite dimensional, while the space of slip vectors $\bm{g}$ is infinite dimensional. Unfortunately, this case cannot be excluded {\em a priori} in applications.
\end{remark}
}		
\begin{remark}
We observe that the statement of the previous theorem is true also in the case of slip fields directed in the normal
		direction. Indeed, instead of  \eqref{eq: wdotn}, we have $\, \bm{w}^+\cdot\bm{\tau} =\bm{0}$ on $\partial D$, where $\bm{\tau}$ is any tangent vector to $\partial  D$, so that we can still derive three independent conditions at each vertex and repeat the same arguments of the last part of the proof. In the oblique case, this condition cannot be guaranteed.
 
 		The condition that the surface $S$ is piece-wise linear is mathematical restrictive. However, it includes the important case in applications of surfaces that are exactly triangularized by polyhedral meshes. In addition, we only use the fact that  piece-wise linear, closed surfaces have at least three vertices that are not co-planar and each vertex is adjoined by three faces with  normal vectors that are independent of each other. Consequently, we can extend our result to more general surfaces that satisfy these conditions. For example, we can consider surfaces that are exactly triangularized by curvilinear meshes, as long as each element of the mesh is not co-planar with any other element, so that vertices of adjoining elements form real corners.  
 
 		The significance of effects originating from  heterogeneity around and across the fault (including damage zones)  and the fault geometric complexity has been recognized since the mid  1980s. These are still subject to active research; for a recent  study, see \cite{ZielkeMai}. Fault geometric complexities have been shown to strongly affect a                 
 		fault’s seismic behavior at the corresponding spatial scales, while  affecting the dynamic rupture process.    
\end{remark}

\appendix 
\begin{appendices}

\section{Singularities for a rectangular dislocation surface parallel to $\{x_3=0\}$}\label{app: sing_rect}
In this section we give the explicit expression for $\bu_\Gamma$ defined in  \eqref{eq: u_Gamma}. To this end, we define the following functions:
	\begin{equation*}
		\begin{aligned}
			h_0(x,y,z)&=\sqrt{x^2+y^2+z^2},\qquad\qquad\qquad
			h_1(x,y,z)=\arctan\left(\frac{xy}{z h_0(x,y,z)}\right),\\
			h_2(x,y,z)&=\frac{xyz}{(x^2+z^2)h_0(x,y,z)},\qquad\qquad
			h_3(x,y,z)=\ln(y+h_0(x,y,z)),\\
			h_4(x,y,z)&=\frac{yz^2}{(x^2+z^2)h_0(x,y,z)},\qquad\qquad
			h_5(x,y,z)=\frac{xyz h_0(x,y,z)}{(x^2+z^2)(y^2+z^2)},\\
			h_6(x,y,z)&=\frac{xyz^3}{(x^2+z^2)(y^2+z^2)h_0(x,y,z)}
		\end{aligned}
	\end{equation*}
We denote $x^1_a:=x_1-a$, $x^1_b:=x_1-b$, $x^2_c:=x_2-c$, $x^2_d:=x_2-d$, $x^3_{\alpha}:=x_3+|\alpha|$, and define the constant $C_{\nu}=-1/(8\pi(1-\nu))$. 
We calculate  the integrals in \eqref{eq: u_Gamma} (some of the calculations were performed using the computer algebra software Mathematica$^{\text{\copyright}}$), and  we find that
	\begin{equation*}
		\begin{aligned}
			C_{\nu}^{-1}(u_{\Gamma})_1=\frac{2k_1(\lambda+2\mu)}{\lambda+\mu}&\left[h_1(x^1_a,x^2_c,x^3_{\alpha})-h_1(x^1_a,x^2_d,x^3_{\alpha})+h_1(x^1_b,x^2_d,x^3_{\alpha})-h_1(x^1_b,x^2_c,x^3_{\alpha})\right]\\
			+k_1&\left[h_2(x^1_a,x^2_d,x^3_{\alpha})-h_2(x^1_a,x^2_c,x^3_{\alpha})+h_2(x^1_b,x^2_c,x^3_{\alpha})-h_2(x^1_b,x^2_d,x^3_{\alpha})\right]\\
			+k_2 x^3_{\alpha}&\left[\frac{1}{h_0(x^1_a,x^2_c,x^3_{\alpha})}-\frac{1}{h_0(x^1_b,x^2_c,x^3_{\alpha})}+\frac{1}{h_0(x^1_b,x^2_d,x^3_{\alpha})}-\frac{1}{h_0(x^1_a,x^2_d,x^3_{\alpha})}\right]\\
			+\frac{k_3\mu}{\lambda+\mu}&\left[h_3(x^1_a,x^2_c,x^3_{\alpha})-h_3(x^1_b,x^2_c,x^3_{\alpha})+h_3(x^1_b,x^2_d,x^3_{\alpha})-h_3(x^1_a,x^2_d,x^3_{\alpha})\right]\\
			+k_3&\left[h_4(x^1_a,x^2_d,x^3_{\alpha})-h_4(x^1_a,x^2_c,x^3_{\alpha})+h_4(x^1_b,x^2_c,x^3_{\alpha})-h_4(x^1_b,x^2_d,x^3_{\alpha})\right],
		\end{aligned}
	\end{equation*}
	\begin{equation*}
			\begin{aligned}
				C_{\nu}^{-1}(u_{\Gamma})_2={k_1x^3_{\alpha}}&\left[\frac{1}{h_0(x^1_a,x^2_c,x^3_{\alpha})}-\frac{1}{h_0(x^1_b,x^2_c,x^3_{\alpha})}+\frac{1}{h_0(x^1_b,x^2_d,x^3_{\alpha})}-\frac{1}{h_0(x^1_a,x^2_d,x^3_{\alpha})}\right]\\
				+\frac{2k_2(\lambda+2\mu)}{\lambda+\mu}&\left[h_1(x^1_a,x^2_c,x^3_{\alpha})-h_1(x^1_a,x^2_d,x^3_{\alpha})+h_1(x^1_b,x^2_d,x^3_{\alpha})-h_1(x^1_b,x^2_c,x^3_{\alpha})\right]\\
				+k_2&\left[h_2(x^2_d,x^1_a,x^3_{\alpha})-h_2(x^2_c,x^1_a,x^3_{\alpha})+h_2(x^2_c,x^1_b,x^3_{\alpha})-h_2(x^2_d,x^1_b,x^3_{\alpha})\right]\\
				+\frac{k_3\mu}{\lambda+\mu}&\left[h_3(x^2_c,x^1_a,x^3_{\alpha})-h_3(x^2_d,x^1_a,x^3_{\alpha})+h_3(x^2_d,x^1_b,x^3_{\alpha})-h_3(x^2_c,x^1_b,x^3_{\alpha})\right]\\
				+k_3&\left[h_4(x^2_d,x^1_a,x^3_{\alpha})-h_4(x^2_c,x^1_a,x^3_{\alpha})+h_4(x^2_c,x^1_b,x^3_{\alpha})-h_4(x^2_d,x^1_b,x^3_{\alpha}) \right] ,			
			\end{aligned}
		\end{equation*}
		\begin{equation*}
					\begin{aligned}
						C_{\nu}^{-1}(u_{\Gamma})_3=\frac{k_1\mu}{\lambda+\mu}&\left[h_3(x^1_b,x^2_c,x^3_{\alpha})-h_3(x^1_a,x^2_c,x^3_{\alpha})+h_3(x^1_a,x^2_d,x^3_{\alpha})-h_3(x^1_b,x^2_d,x^3_{\alpha})\right]\\
						+k_1&\left[h_4(x^1_a,x^2_d,x^3_{\alpha})-h_4(x^1_a,x^2_c,x^3_{\alpha})+h_4(x^1_b,x^2_c,x^3_{\alpha})-h_4(x^1_b,x^2_d,x^3_{\alpha})\right]\\
						+\frac{k_2\mu}{(\lambda+\mu)}&\left[h_3(x^2_d,x^1_a,x^3_{\alpha})-h_3(x^2_c,x^1_a,x^3_{\alpha})+h_3(x^2_c,x^1_b,x^3_{\alpha})-h_3(x^2_d,x^1_b,x^3_{\alpha})\right]\\
						+k_2&\left[h_4(x^2_d,x^1_a,x^3_{\alpha})-h_4(x^2_c,x^1_a,x^3_{\alpha})+h_4(x^2_c,x^1_b,x^3_{\alpha})-h_4(x^2_d,x^1_b,x^3_{\alpha})\right]\\
						+\frac{k_3(\lambda+2\mu)}{\lambda+\mu}&\left[h_1(x^1_a,x^2_c,x^3_{\alpha})-h_1(x^1_a,x^2_d,x^3_{\alpha})+h_1(x^1_b,x^2_d,x^3_{\alpha})-h_1(x^1_b,x^2_c,x^3_{\alpha})\right]\\
						+k_3&\left[h_5(x^1_a,x^2_c,x^3_{\alpha})-h_5(x^1_a,x^2_d,x^3_{\alpha})+h_5(x^1_b,x^2_d,x^3_{\alpha})-h_5(x^1_b,x^2_c,x^3_{\alpha})\right]\\
						+k_3&\left[h_6(x^1_a,x^2_c,x^3_{\alpha})-h_6(x^1_a,x^2_d,x^3_{\alpha})+h_6(x^1_b,x^2_d,x^3_{\alpha})-h_6(x^1_b,x^2_c,x^3_{\alpha})\right] .
					\end{aligned}
				\end{equation*}

\end{appendices}

\end{document}